\renewcommand{\arraystretch}{1.2}
\newcounter{mycount}
\theoremstyle{plain}
\newtheorem{theorem}[mycount]{Theorem}
\newtheorem{lemma}[mycount]{Lemma}
\newtheorem{proposition}[mycount]{Proposition}
\newtheorem{remark}{Remark}
\theoremstyle{definition}
\newtheorem{definition}{Definition}
\theoremstyle{example}
\theoremstyle{remark}
\numberwithin{equation}{section}
\numberwithin{figure}{section}
\def\des{\textsf{des}}
\def\iasc{\textsf{iasc}}
\newcommand{\asc}{\textsf{asc}}
\def\zero{\textsf{zero}}
\def\rep{\textsf{rep}}
\def\rmin{\textsf{rmin}}
\def\lmax{\textsf{lmax}}
\def\lmin{\textsf{lmin}}
\def\rmax{\textsf{rmax}}
\def\AND{\quad\text{and}\quad}
\def\WITH{\quad\text{with}\quad}
\DeclareRobustCommand{\Stirling}{\genfrac\{\}{0pt}{}}
\DeclareMathOperator{\Max}{\textnormal{max}}
\DeclareMathOperator{\Log}{\textnormal{log}}
\DeclareMathOperator{\Exp}{\textnormal{exp}}
\newcommand{\dd}{\mathop{}\!\textnormal{d}}
\def\le{\leqslant}
\def\ge{\geqslant}
\def\tr#1{\lfloor #1\rfloor}
\def\lpa#1{\bigl({#1}\bigr)}
\def\Lpa#1{\Bigl({#1}\Bigr)}
\def\llpa#1{\biggl({#1}\biggr)}
\def\ve{\varepsilon}
\newcommand{\SEPATTERN}{
	\draw[step=1, xshift=14pt, yshift=14pt, \cfill, line cap=round] (0,0) grid (3,3);
	\draw[step=1, xshift=14pt, yshift=14pt, thick] (0,1) -- (3,1);
	\draw[step=1, xshift=14pt, yshift=14pt, thick] (1,0) -- (1,3);
	\foreach \x/\y in {1/2,2/3,3/1} \node[disc, fill=black] at (\x,\y) {};
}
\newcommand{\sepattern}{\!\raisebox{-0.3em}{
		\begin{tikzpicture}[line width=0.7pt, scale=0.15]
		\tikzstyle{disc} = [circle,thin,draw=black, minimum size=1.7pt, inner sep=0pt ]
		\SEPATTERN
		\end{tikzpicture}}
}
\newcommand{\cfill}{black!40}
\begin{document}

\title[Asymptotics and statistics on Fishburn Matrices
II]{Asymptotics and statistics on Fishburn Matrices: dimension
distribution and a conjecture of Stoimenow\thanks{This work was
partially supported by the FWF-MOST (Austrian-Taiwanese) Grant I
2309-N35.}}

\author{Hsien-Kuei Hwang, Emma Yu Jin and Michael J. Schlosser}
\thanks{The first author was also partially supported by an
Investigator Award from Academia Sinica under the Grant
AS-IA-104-M03. The second and the third author are partially
supported by the Austrian Research Fund FWF under the Grant P 32305.}

\address{Institute of Statistical Science, Academia Sinica, Taipei, 
115, Taiwan}\email{hkhwang@stat.sinica.edu.tw}

\address{Fakult\"{a}t f\"{u}r Mathematik, Universit\"{a}t Wien, 
Vienna, Austria} \email{yu.jin@univie.ac.at}
\email{michael.schlosser@univie.ac.at}

\maketitle

\begin{abstract}

We establish the asymptotic normality of the dimension of large-size
random Fishburn matrices by a complex-analytic approach. The
corresponding dual problem of size distribution under large dimension
is also addressed and follows a quadratic type normal limit law.
These results represent the first of their kind and solve two open
questions raised in the combinatorial literature. They are presented
in a general framework where the entries of the Fishburn matrices are not
limited to binary or nonnegative integers. The analytic saddle-point
approach we apply, based on a powerful transformation for $q$-series
due to Andrews and Jel\'\i nek, is also useful in solving a
conjecture of Stoimenow in Vassiliev invariants.
    
\end{abstract}

\tableofcontents

\section{Introduction and main results}
\label{S:intro}

Fishburn matrices (abbreviated as FMs), introduced by Peter Fishburn
in 1970 during his study of interval orders \cite{Fishburn1970}, are
upper-triangular square matrices with nonnegative integers as entries
such that no row and no column contains exclusively zeros. They also
appeared a few years later under a different guise in the study of
transitively directed graphs by Andresen and Kjeldsen
\cite{Andresen1976}, where essentially a recursive formula was given
on the number of \emph{primitive FMs} (FMs with entries $0$ or $1$)
with respect to the dimension and the first row sum (which is
$\xi(n,k)$ in \cite{Andresen1976}; see also \S~\ref{S:AK-1976}). For
example, all FMs with \emph{size} (or sum of all entries) equal to
$4$ are depicted in Figure~\ref{F:fishburn4} and all primitive FMs of
dimension $3$ in Figure \ref{F:fishburndim}.

\begin{figure}[!h]
	\begin{center}
			$$\begin{pmatrix*}[r]
			4\\
			\end{pmatrix*}
			\begin{pmatrix*}[r]
			1 \, 2 \\
			  \, 1
			\end{pmatrix*}\begin{pmatrix*}[r]
			2 \, 1 \\
			  \, 1
			\end{pmatrix*}\begin{pmatrix*}[r]
			1 \, 1 \\
			  \, 2
			\end{pmatrix*}\begin{pmatrix*}[r]
			2 \, 0 \\
			  \, 2
			\end{pmatrix*}\begin{pmatrix*}[r]
			3 \, 0 \\
			  \, 1
			\end{pmatrix*}\begin{pmatrix*}[r]
			1 \, 0\\
			  \, 3
			\end{pmatrix*}$$
			$$\begin{pmatrix*}[r]
			1 \, 1 \, 0\\
			  \, 1 \, 0\\
			  \,    \, 1
			\end{pmatrix*}\,\,\begin{pmatrix*}[r]
			1 \, 0 \, 1\\
			  \, 1 \, 0\\
			  \,   \, 1
			\end{pmatrix*}\,\,\begin{pmatrix*}[r]
			1 \, 0 \, 0\\
			  \, 1 \, 1\\
			  \,   \, 1
			\end{pmatrix*}\,\,\begin{pmatrix*}[r]
			2 \, 0 \, 0\\
			  \, 1 \, 0\\
			  \,   \, 1
			\end{pmatrix*}\,\,\begin{pmatrix*}[r]
			1 \, 0 \, 0\\
			  \, 2 \, 0\\
			  \,   \, 1
			\end{pmatrix*}\,\,\begin{pmatrix*}[r]
			1 \, 0 \, 0\\
			  \, 1 \, 0\\
			  \,   \, 2
			\end{pmatrix*}\,\,\begin{pmatrix*}[r]
			1 \, 1 \, 0\\
			  \, 0 \, 1\\
			  \,   \, 1
			\end{pmatrix*}\,\,\begin{pmatrix*}[r]
			1 \, 0 \, 0 \, 0\\
			  \, 1 \, 0 \, 0\\
			  \,   \, 1 \, 0\\
			  \,   \,   \, 1
	      \end{pmatrix*}$$
		\end{center}   
	\caption{All $15$ FMs of size $n=4$. The average 
    dimension of these matrices is 
    $\frac1{15}(1\cdot 1+2\cdot 6+3\cdot 7+4\cdot 1)\approx 2.533$, 
	which is already close to our asymptotic approximation 
    $\frac{6}{\pi^2}n=\frac{24}{\pi^2}\approx2.432$ in Theorem 
	\ref{T:dimdis}.}
	\label{F:fishburn4} 
\end{figure}
\begin{figure}[!h]
	\begin{center}
		$$\begin{pmatrix*}[r]
		1 \, 0 \, 0\\
		  \, 1 \, 0\\
		  \,   \, 1
		\end{pmatrix*}\,\begin{pmatrix*}[r]
		1 \, 1 \, 0\\
		  \, 1 \, 0\\
		  \,   \, 1
		\end{pmatrix*}\,\begin{pmatrix*}[r]
		1 \, 0 \, 0\\
		  \, 1 \, 1\\
		  \,   \, 1
		\end{pmatrix*}\,
		\begin{pmatrix*}[r]
		1 \, 0 \, 1\\
		  \, 1 \, 0\\
		  \,   \, 1
		\end{pmatrix*}\,
		\begin{pmatrix*}[r]
		1 \, 1 \, 0\\
		  \, 1 \, 1\\
		  \,   \, 1
		\end{pmatrix*}\,\begin{pmatrix*}[r]
		1 \, 1 \, 1\\
		  \, 1 \, 0\\
		  \,   \, 1
		\end{pmatrix*}\,\begin{pmatrix*}[r]
		1 \, 0 \, 1\\
		  \, 1 \, 1\\
		  \,   \, 1
		\end{pmatrix*}\,\begin{pmatrix*}[r]
		1 \, 1 \, 1\\
		  \, 1 \, 1\\
		  \,   \, 1
		\end{pmatrix*}\,\begin{pmatrix*}[r]
		1 \, 1 \, 0\\
		  \, 0 \, 1\\
		  \,   \, 1
		\end{pmatrix*}\,
		\begin{pmatrix*}[r]
		1 \, 1 \,  1\\
		  \, 0 \, 1\\
		  \,   \, 1
		\end{pmatrix*}\,\,$$
	\end{center}   
	\caption{All $10$ primitive FMs of dimension $n=3$. The average
    size (sum of entries) of these matrices is 
	$\frac1{10}(3\cdot 1+4\cdot 4+5\cdot 4+6\cdot 1)=4.5$ 
	while the asymptotic average size equals $\frac14n(n+1)=3$ in 
	Theorem \ref{T:dimdis2}.}
	\label{F:fishburndim} 
\end{figure}

Apart from the connection between primitive FMs and transitively
directed graphs, it is now known that FMs are in bijection with
interval orders, $(\textbf{2+2})$-free posets, ascent sequences,
certain pattern-avoiding permutations and regular linearized chord
diagrams (regular LCDs), etc.; see for instance \cite{Bousquet-Melou2010, Dukes2010,
Fishburn1985}.

The numbers of FMs of a given size are known as the \emph{Fishburn
numbers} (see \cite{Claesson2011} and
\cite[\href{https://oeis.org/A022493}{A022493}]{oeis2019}), which can
be computed by the Taylor coefficients of the generating function
\begin{align}\label{E:genfish}
    \sum_{k\ge0}\prod_{1\le j\le k}\lpa{1-(1-z)^j}
    =1+z+2z^2+5z^3+15z^4+53z^5+217z^6
    +\cdots.
\end{align}
This (formal) generating function was derived by Zagier
\cite{Zagier2001}, using a recursive formula found earlier by
Stoimenow \cite{Stoimenow1998} for the number of regular LCDs with a
given length; we postpone the exact definition of LCDs and regular
LCDs to Section \ref{S:sto}. Stoimenow also made in the same paper
\cite{Stoimenow1998} a conjecture concerning the asymptotic relation
between the Fishburn numbers and the number of connected regular LCDs
of size $n$, which will be addressed in more detail at the end of
this section.

Since the seminal work \cite{Bousquet-Melou2010} by Bousquet-M\'elou,
Claesson, Dukes and Kitaev, much attention has been drawn to the
refined enumeration of Fishburn structures with respect to various
classical statistics; see for instance \cite{Bousquet-Melou2010,
Dukes2010, Fu2020, Jelinek2012, Jelinek2015, Kitaev2011, Kitaev2017,
Levande2013}. Two types of statistics among all members of the
Fishburn family are Eulerian and Stirling statistics \cite{Fu2020}:
any statistic whose distribution over a member of the Fishburn family
equals the distribution of the dimension (resp.\ the first row sum)
on FMs is called an \emph{Eulerian} (resp.\ a \emph{Stirling})
statistic; see Table~\ref{T:stat} for a summary of the
equidistributed Eulerian and Stirling statistics on six Fishburn
structures.
\begin{table}[htpb]
	\centering
	\begin{tabular}{  c | c | c }
		 \multicolumn{1}{c}{Fishburn structures}
		 &\multicolumn{1}{c}{Eulerian statistics}
		 & \multicolumn{1}{c}{Stirling statistics}
		 \\ \hline
	     FMs & \text{dimension} -- $1$ 
         & \makecell{sum of the first row\\ (or the last column) \\ number of weakly northeast cells}\\
         \hline
	     (\textbf{2}$+$\textbf{2})-free posets & 
         magnitude -- $1$  & number of minimal elements\\
         \hline      
	     Ascent sequences & \asc, \rep & \zero, \textsf{max}, \rmin \\
	     \hline
	     (\textbf{2}$-$\textbf{1})-avoiding sequences & \rep 
		 & \textsf{max} \\
		 \hline 
	     $(\sepattern\,)$-avoiding permutations 
         & \des, \iasc  & \lmin, \lmax, \rmax \\
         \hline
	     \makecell{Regular linearized\\ chord diagrams}
		 & \makecell{length of the initial \\ run of openers}
         & \makecell{number of pairs of arcs\\ 
		 $(a,b),(c,d)$ such that\\ $a<b=c-1<d-1$} \\
	     \hline
	\end{tabular}
    \vspace{3mm}
	\caption{Equidistributed Eulerian and Stirling Statistics on 
	Fishburn structures: statistics in the second (resp.\ third) and 
	column are all equidistributed with each other; see 
    \cite{Bousquet-Melou2010, Dukes2010, Fu2020, Jelinek2012, 
	Levande2013} for precise definitions.} \label{T:stat}
\end{table}

While there is a large literature on the combinatorial aspects of
statistics over Fishburn structures, very few studies have been
conducted on asymptotic and stochastic properties concerning
structures of large size; see \cite{Bringmann2014,Zagier2001} and our previous paper \cite{Hwang2020}. Questions such as (see \cite{Jelinek2012})
``\emph{what is the expected dimension of a random FM of size $n$
when each of the size-$n$ FMs is chosen with the same probability?}''
and ``\emph{what is the expected size of a random FM when all FMs of
the same dimension are equally likely?}'' have remained open, and the
primary purpose of this paper is to answer these questions in a more
\emph{complete} (including the variance and the limiting
distribution) and more \emph{systematic} (covering a wide class of
generalized FMs) way.

In contrast to the Stirling statistics worked out in
\cite{Hwang2020}, which have typically \emph{logarithmic behaviors}
(logarithmic mean and logarithmic variance), the Eulerian statistics
studied in this paper, namely, dimension distribution with fixed
size, have asymptotically \emph{linear mean and linear variance} (the
corresponding dual statistic, size distribution of fixed dimension,
is \emph{quadratic}). Such a contrast is well known for statistics on
permutations, but has remained mostly elusive on Fishburn structures.
Whichever the case, the limiting distribution of any statistic is 
normal as long as the variance goes unbounded, although the proof 
technicalities differ.

Since an FM of a given size can be viewed as an integer partition
(but allowing $0$ as entries) arranged on an upper-triangular matrix,
there is yet a third class of Poisson statistics examined in detail
in \cite{Hwang2020}: the number of occurrences of the smallest
nonzero entry in the matrix. Similar to the classical integer
partitions where $1$ has a predominant frequency, the smallest
nonzero entry in FMs also appears almost everywhere. But different
from the exponential limit law of the occurrences of the smallest
part in random integer partitions, the smallest entry in FMs has its
occurrences following mostly (but not always) a Poisson limit law;
see \cite{Hwang2020}. This indicates an even
higher concentration of the smallest entry near its expected value in
the context of random FMs. Such a viewpoint will also be useful in
interpreting our asymptotic results in this paper.

The approach developed in \cite{Hwang2020} relies on a direct
two-stage saddle-point method that is applied to the generating
functions with a sum-of-product form, and is very powerful in that it
is not only applicable to the asymptotics of a wide class of concrete
examples, but also provides an effective means of understanding the
limit laws of Stirling statistics. In the present paper, we further
extend the same saddle-point approach to Eulerian statistics. This
extension is however not straightforward as a direct application
fails due to the violent fluctuations in summing the dominant terms,
similar to the summands on the left-hand side of \eqref{E:genfish}.
It turns out that a key property needed is a generalized Rogers-Fine
identity derived by Andrews and Jel\'\i nek in \cite{Andrews2014}.
Furthermore, an additional difficulty arises in handling the
uniformity in the extra parameter of the probability generating
function.

Given any multiset $\Lambda$ of nonnegative integers with the 
generating function 
\begin{align}\label{E:lambda}
	\Lambda(z)=1+\lambda_1 z+\lambda_2z^2+\cdots,
\end{align} 
a \emph{$\Lambda$-FM} is an FM with entry set $\Lambda$. The original
FMs correspond to the situation when all $\lambda_j$'s equal $1$, and
the primitive FMs to $\lambda_j = \delta_{j,1}$, $j\ge1$, the
Kronecker symbol. Although such a matrix formulation requires that
all the coefficients $\lambda_j$ be nonnegative integers, our proof
is independent of this restriction and the $\lambda_j$'s can indeed
be any nonnegative reals.

It is known that if $\Lambda(z)$ is analytic at $z=0$ with 
$\lambda_1>0$ then the number of $\Lambda$-FMs of size $n$ is given 
by (see \cite{Hwang2020})
\begin{align}\label{E:an-counts}
	a_n := [z^n]\sum_{k\ge0}\prod_{1\le j\le k}
	\lpa{1-\Lambda(z)^{-j}}
	= cn^{\frac12} (\lambda_1\mu)^n n! 
	\lpa{1+O\lpa{n^{-1}}},
\end{align}
where 
\begin{align}\label{E:c-rho}
	(c,\mu) := \llpa{\frac{12\sqrt{3}}{\pi^{5/2}}
	\, e^{\frac{\pi^2}6\lpa{\frac{\lambda_2}{\lambda_1^2}
	-\frac12}}, \frac{6}{\pi^2}}.
\end{align}
Here $[z^n]f(z)$ denotes the Taylor coefficient of $f(z)$. We see
that the dominant asymptotic order (neglecting the leading constant
$c$) depends crucially on $\lambda_1$, but not on any other
$\lambda_j$'s with $j\ge2$, showing roughly the pervasiveness of $1$
in a typical $\Lambda$-FM. On the other hand, the expression of $c$,
as well as the violent cancellations of terms when summing the Taylor
expansions of the finite products on the left-hand side of
\eqref{E:an-counts}, implicitly points to the difficulty of the
analysis involved; see \cite{Hwang2020} for more precise results.

\subsection{Dimension distribution of fixed-size FMs}

Define the bivariate generating function (see \cite{Fu2020, 
Jelinek2012}) 
\begin{align}\label{E:Fzv0}
    F(z,v) := \sum_{n\ge0}P_n(v) z^n
    = \sum_{k\ge0}\prod_{1\le j\le k}
    \llpa{1-\frac1{1+v(\Lambda(z)^j-1)}},
\end{align}
as an extension of the generating function in \eqref{E:an-counts}, 
where $P_n(v)$ is the generating polynomial of the dimension of 
size-$n$ $\Lambda$-FMs with $P_n(1)=a_n$. 

\begin{theorem}[Open problem of \cite{Bringmann2014}]\label{T:dimdis}
Assume that $\Lambda(z)$ is analytic at $z=0$ with $\lambda_1>0$ and 
that all $\Lambda$-FMs of size $n$ are equally likely to be selected. 
Then the dimension $X_n$ of a random matrix is asymptotically 
normally distributed with mean and variance both linear in $n$, 
namely,
\begin{align}\label{E:dimdis}
	\frac{X_n-\mu n}{\sigma\sqrt{n}}
	\xrightarrow{d} \mathscr{N}(0,1),	
    \WITH
	(\mu,\sigma^2)
    :=\llpa{\frac{6}{\pi^2},
    \frac{3(12-\pi^2)}{\pi^4}},
\end{align}
where the symbol $\xrightarrow{d}$ stands for convergence in 
distribution and $\mathscr{N}(0,1)$ the standard normal distribution. 
\end{theorem}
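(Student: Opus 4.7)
The plan is to reduce the theorem to Hwang's quasi-power theorem. If we can show, uniformly for $v$ in a complex neighborhood of $1$, an expansion of the form
\begin{equation*}
	[z^n]F(z,v)
	= c(v)\,n^{1/2}\,\lpa{\lambda_1 \rho(v)}^{n}\,n!\,\lpa{1+O(n^{-1})},
\end{equation*}
with $c(v)$ and $\rho(v)$ analytic at $v=1$, $c(1)=c$, and $\rho(1)=6/\pi^2$, then dividing by $a_n$ from \eqref{E:an-counts} gives
\begin{equation*}
	\mathbb{E}\lpa{v^{X_n}}
	= \frac{P_n(v)}{P_n(1)}
	= \frac{c(v)}{c(1)}\llpa{\frac{\rho(v)}{\rho(1)}}^{\!n}\lpa{1+O(n^{-1})},
\end{equation*}
which is exactly a quasi-power with $\beta_n = n$. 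The central limit theorem and the explicit constants in \eqref{E:dimdis} then follow from the Taylor expansion of $\log \rho(v)$ at $v=1$: one has $\mu = (\log\rho)'(1)$ and $\sigma^2 = (\log\rho)''(1)+(\log\rho)'(1)$.

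The first technical step is to cure the violent alternating-sign cancellations hidden in the sum \eqref{E:Fzv0}: apply the Andrews--Jel\'\i nek generalization of the Rogers--Fine identity \cite{Andrews2014}, whose extra free parameter can be tuned to absorb the factor $1+v(\Lambda(z)^j-1)$ appearing in each summand. The transformed series has asymptotically unimodal summands in the index $k$ and a well-defined Gaussian peak, making it amenable to Laplace/saddle-point estimation. This is the $\Lambda$-analogue, \emph{enhanced with the catalytic variable $v$}, of the argument used for \eqref{E:an-counts} in \cite{Hwang2020}.

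The second step is the two-stage saddle-point analysis of \cite{Hwang2020}, now carried through with $v$ as an auxiliary parameter. At the inner stage, Laplace's method approximates the sum over $k$ by a Gaussian integral, producing an intermediate function $G(z,v)$ whose behavior near its dominant singularity drives $[z^n]F(z,v)$. At the outer stage, $[z^n]G(z,v)$ is extracted by contour integration through a saddle $z = z_n(v)$ solving an equation which, at $v=1$, is the one that yields the constant $6/\pi^2$; the implicit function theorem then produces $z_n(v)$, and hence $\rho(v)$ and $c(v)$, as analytic functions of $v$ on a small disc around $1$.

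The principal technical obstacle is \emph{uniformity in $v$}. Both saddles move with $v$, so the Gaussian approximations, the steepest-descent contours, the tail truncations, and all remainder estimates must be controlled uniformly on a complex disc $|v-1|\le\delta$. This forces one to choose a single contour deformation that works simultaneously for every $v$ in the disc, and to keep a careful $v$-dependent bookkeeping of the second-derivative terms driving Laplace's method, together with analyticity of the inner saddle in the pair $(z,v)$. Once uniformity is secured, a direct (if lengthy) computation of $(\log\rho)'(1)$ and $(\log\rho)''(1)$ recovers the constants $\mu=6/\pi^2$ and $\sigma^2=3(12-\pi^2)/\pi^4$ stated in \eqref{E:dimdis}, and Hwang's quasi-power theorem delivers $(X_n-\mu n)/(\sigma\sqrt{n})\xrightarrow{d}\mathscr{N}(0,1)$.
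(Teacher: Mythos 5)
Your outline is built from the same two ingredients the paper uses (the Andrews--Jel\'\i nek identity \eqref{E:aj} to cure the alternating-sign sum, then a two-stage saddle-point/Laplace analysis of the transformed series), and your Taylor formulas $\mu = (\log\rho)'(1)$, $\sigma^2 = (\log\rho)''(1) + (\log\rho)'(1)$ are correct. But your endgame is genuinely different from, and strictly harder than, the paper's. You ask for a quasi-power expansion $[z^n]F(z,v) = c(v)\,n^{1/2}(\lambda_1\rho(v))^n n!\lpa{1+O(n^{-1})}$ \emph{uniformly on a fixed complex disc $|v-1|\le\delta$}, in order to feed it into Hwang's quasi-power theorem. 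The paper never proves such a thing: it establishes the expansion only along the shrinking arc $v = e^{i\theta/\sqrt{n}}$, $\theta = o(n^{1/6})$ (Proposition~\ref{P:Xn-cf}), and closes with the L\'evy continuity theorem for characteristic functions. This restriction is not a stylistic choice. In the regime $|v-1| = O(n^{-1/2})$ the factor $R_k(z,v)$ in \eqref{E:exF1} satisfies $\log R_k = O(k|v-1|) = O(k n^{-1/2})$, which is a genuine perturbation of $\log E_k(z) = \Theta(k)$; the joint saddle in $(k,z)$ barely moves, the concentration bound \eqref{E:Ekz-o} for $E_k$ alone suffices, and the extra factor merely supplies the exponent $\mu\sqrt{n}\,i\theta - \tfrac12\sigma^2\theta^2$. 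Once $|v-1|$ is of constant order, $\log R_k$ becomes $\Theta(k)$, \emph{comparable} with $\log E_k$; the saddle shifts by an $O(1)$ fraction, the tail/concentration estimates for $E_k$ alone no longer control the product, and one must re-derive unimodality and concentration for $E_k R_k$ with $v$ as a live parameter throughout. You do flag the uniformity issue in the abstract (``both saddles move with $v$''), but you neither acknowledge that this changes the analysis qualitatively rather than quantitatively, nor that a markedly shorter proof is available by restricting to $v-1 = O(n^{-1/2})$. If executed in full your route would yield the theorem and give a cleaner derivation of convergence rates and cumulants, but as a replacement for the paper's proof it substantially overshoots the target and underreports the work the fixed-disc uniformity would cost.
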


See Figure~\ref{F:b1} for three different graphic renderings of the
histograms of $X_n$ when $\Lambda = \mathbb{N}$. Note that $\sigma^2
=\mu^2-\frac12\mu$, and the coefficient pair $(\mu,\sigma^2)$ is to
some extent universal as we will also see its occurrences in other
classes of FMs (albeit in slightly different scales).

\begin{figure}[!ht]
\begin{center}
	\begin{tabular}{ccc}
		\includegraphics[height=4cm]{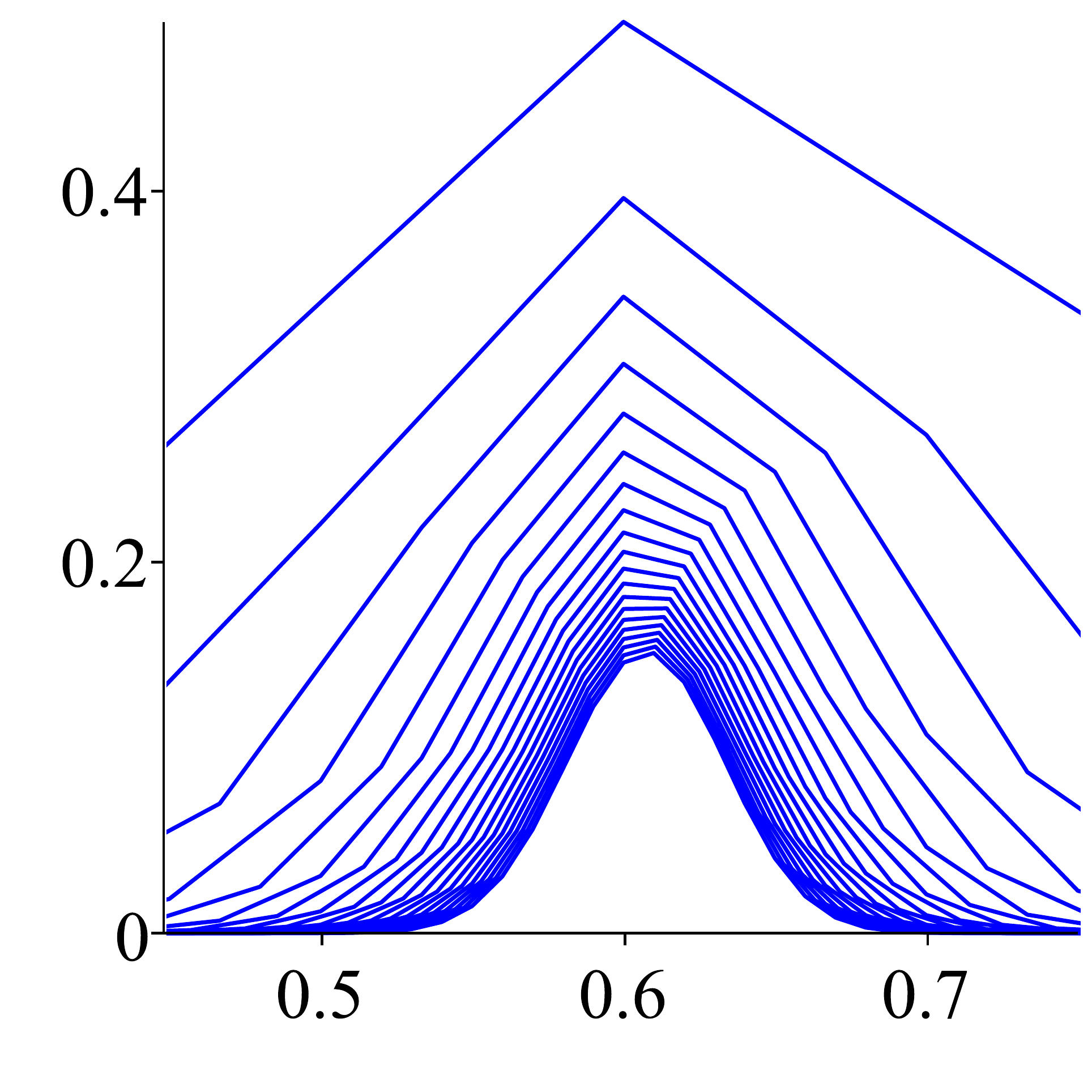} &
		\includegraphics[height=4cm]{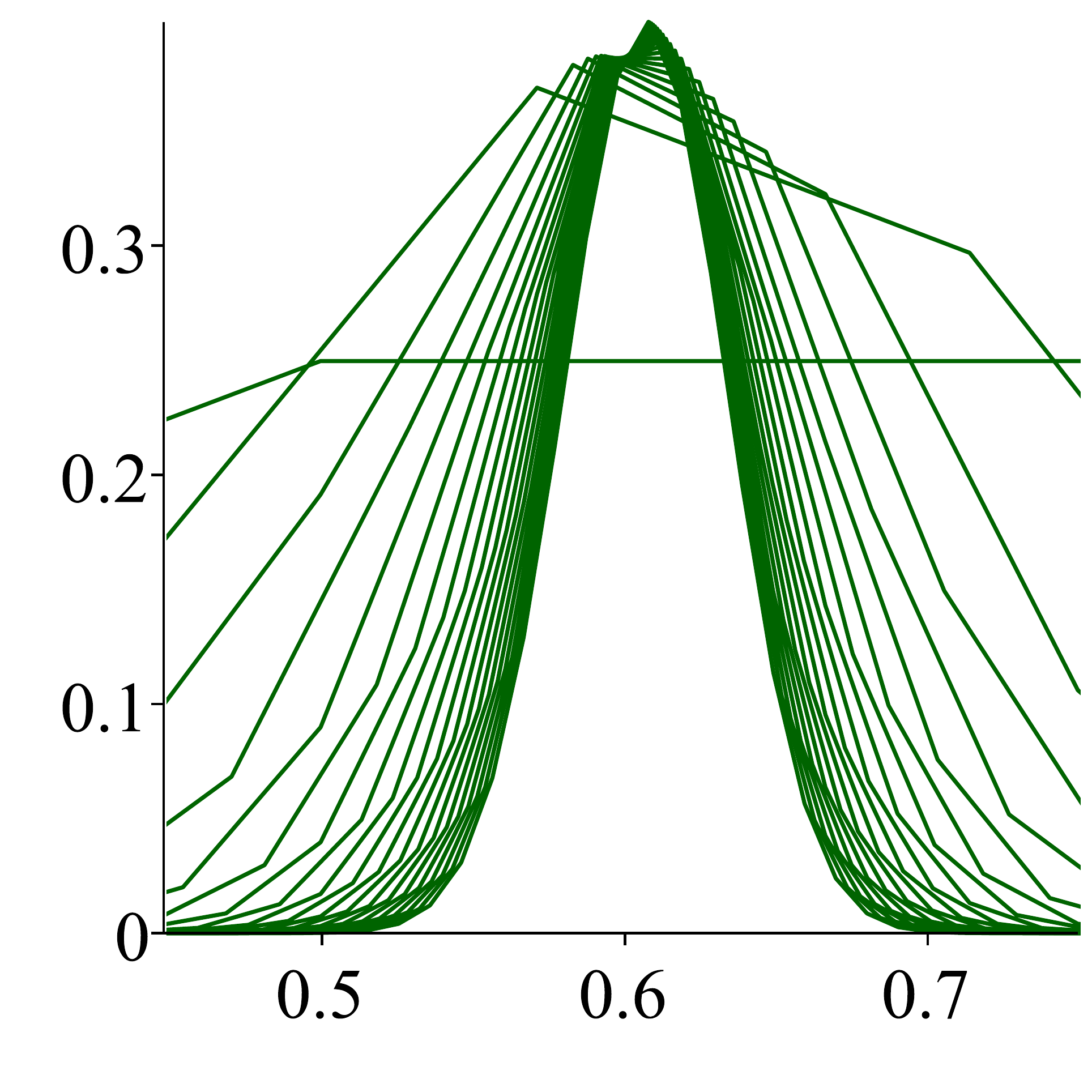} &
		\includegraphics[height=4cm]{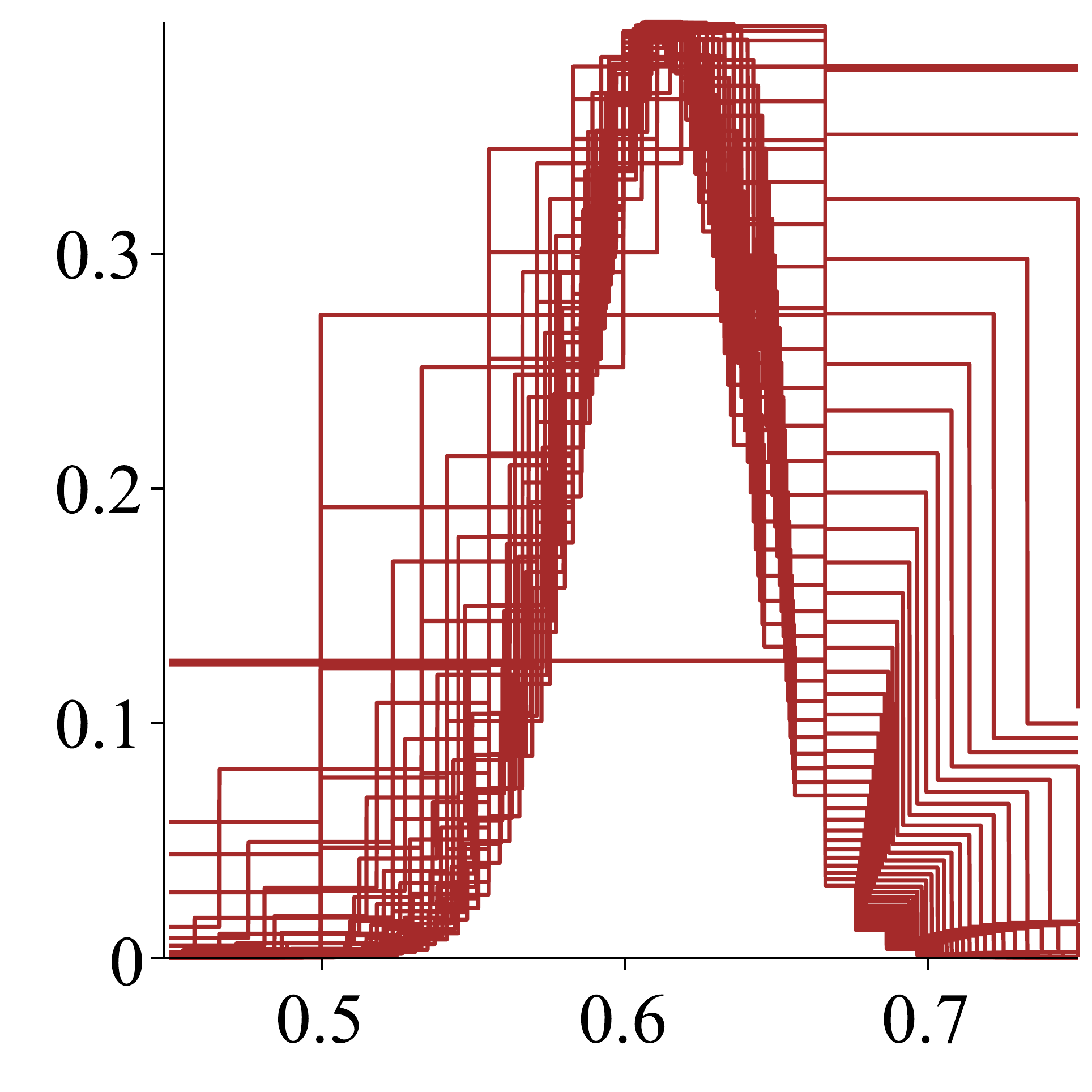} \\
		$\mathbb{P}(X_n=tn)$
		& $\sqrt{\mathbb{V}(X_n)}\,
		\mathbb{P}(X_n=t\mathbb{E}(X_n))$
		& $\sqrt{\mathbb{V}(X_n)}\,
		\mathbb{P}(X_n=\tr{t\mathbb{E}(X_n)})$
	\end{tabular}
\end{center}
\caption{Three different ways of visualizing the asymptotic normality
of $X_n$ where we plot the histograms of $X_n$ in the case when
$\Lambda(z) = (1-z)^{-1}$: for $n=5j$, $1\le j\le 20$ (left and
middle) and $n=3k$, $1\le k\le 33$ (right).} \label{F:b1}
\end{figure}

What is particularly remarkable here is that the central limit
theorem \eqref{E:dimdis} is \emph{independent} of $\Lambda$ (as long
as $\lambda_1>0$). The same also holds true for the first row sum
(see \cite{Hwang2020}), which behaves asymptotically like a normal
distribution with both mean and variance asymptotic to $\Log n$. Such
an ``invariance property'' \eqref{E:dimdis} may seem more surprising
than its logarithmic counterpart because linear statistics cover
stochastically a wider range of variations. We can view this
phenomenon from a few different angles.

First, from the asymptotic approximation \eqref{E:an-counts}, we see
that the number of general $\Lambda$-FMs with $\lambda_1>0$ behaves
roughly (modulo the leading constant $c$) like $\lambda_1^n$ times
the number of primitive FMs of the same size with $\Lambda(z)=1+z$.
So we next examine more closely how the magic constant $\mu$ appears
in random primitive FMs. The number of primitive FMs of size $n$ is
given by (see
\cite[\href{https://oeis.org/A138265}{A138265}]{oeis2019})
\[
    (a_n)_{n\ge1} 
    = (1, 1, 2, 5, 16, 61, 271, 1372, 7795, 49093, 339386, 
    2554596,\dots),
\]
and it turns out that in this special case, we have an unexpected 
\emph{identity} for the expected dimension:
\begin{align}\label{E:EXn-pfm}
    \mu_n 
    := \mathbb{E}(X_n) 
    = \frac{a_{n+1}}{a_n}\qquad(n\ge1);
\end{align}
see Section~\ref{S:mu-var} for a more general form as well as a
combinatorial proof of \eqref{E:EXn-pfm}; in other words,
\emph{the sum of the dimensions of all size-$n$ primitive FMs
matrices equals the number of size-$(n+1)$ primitive FMs}. In view of
\eqref{E:an-counts} and \eqref{E:EXn-pfm}, we immediately get the
asymptotic linearity of $\mathbb{E}(X_n)$ with the mean constant
$\mu$. In a similar manner, the second moment (and then the variance 
$\sigma^2$) can be approached via the same analytic and combinatorial 
arguments:
\[
    \sum_{1\le k\le n}\binom{k+1}2p_{n,k}
    +\sum_{1\le k\le n+1}k^2p_{n+1,k}
    = a_{n+3},
\]
where $p_{n,k}$ denotes the number of primitive FMs of size $n$ and
dimension $k$, which is $[v^k]P_n(v)$ from \eqref {E:Fzv0} when
$\Lambda(z)=1+z$, appearing also in
\cite[\href{https://oeis.org/A137252}{A137252}]{oeis2019}.

In addition, we will also derive finer asymptotic approximations for
$\mathbb{E}(X_n)$ and $\mathbb{V}(X_n)$.

\begin{theorem} \label{T:mu-var}
The mean and the variance of the dimension $X_n$ (defined in 
Theorem~\ref{T:dimdis}) of a random $\Lambda$-FM of size $n$ satisfy
\begin{align}\label{E:mu}
    \mathbb{E}(X_n) &= \mu \Lpa{n +\frac32}
	-\frac{\lambda_2}{\lambda_1^2}+ O\lpa{n^{-1}},\\
    \mathbb{V}(X_n) &= \sigma^2 \Lpa{n+\frac32}
    -\frac14+\frac{\lambda_2}{2\lambda_1^2}
    +O\lpa{n^{-1}}, \label{E:var}
\end{align}
where $(\mu,\sigma^2)$ is given in \eqref{E:dimdis}.
\end{theorem}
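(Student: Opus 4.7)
The plan is to refine the saddle-point analysis underlying \eqref{E:an-counts} by one further term, and then to deduce the mean and variance from the first two factorial moments of $X_n$.

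First, I would write
\[
\mathbb{E}\lpa{X_n(X_n-1)\cdots(X_n-r+1)}
= \frac{[z^n]\,\partial_v^r F(z,v)\big|_{v=1}}{a_n},
\qquad r=1,2,
\]
and combine these via $\mathbb{V}(X_n)=\mathbb{E}\bigl(X_n(X_n-1)\bigr)+\mathbb{E}(X_n)-\mathbb{E}(X_n)^2$. Differentiating \eqref{E:Fzv0} under the outer sum and setting $v=1$ expresses each numerator as a new sum–of–products series in $\Lambda(z)$, structurally similar to the one defining $a_n$. To each such series I would apply the Andrews–Jel\'\i nek generalization \cite{Andrews2014} of the Rogers–Fine identity, converting it into a single absolutely convergent $q$-series sharing the dominant singular structure of $F(z,1)$ and so amenable to the same two-stage saddle-point treatment as in \cite{Hwang2020}.

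Second, I would rerun that two-stage saddle-point method on each transformed $q$-series, but carry the local Laplace expansion around the saddle one order further than is needed for \eqref{E:an-counts}. Because the saddle point and the Gaussian widths are unchanged, the linear pieces $\mu\Lpa{n+\tfrac32}$ and $\sigma^2\Lpa{n+\tfrac32}$ emerge from the \emph{leading} Gaussian contribution (with the shift $\tfrac32$ arising from matching the algebraic prefactor $n^{1/2}$ in \eqref{E:an-counts} against the Stirling expansion of $n!$), while the additive constants $-\lambda_2/\lambda_1^2$ and $-\tfrac14+\lambda_2/(2\lambda_1^2)$ come from the \emph{first-order} correction; this is consistent with the fact that the ratio $\lambda_2/\lambda_1^2$ already appears inside the exponential in $c$ (see \eqref{E:c-rho}). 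Dividing by $a_n$ via \eqref{E:an-counts} and expanding to one order past the leading term then yields \eqref{E:mu} and \eqref{E:var}. As a sanity check, for primitive FMs ($\lambda_1=1$, $\lambda_2=0$) the resulting formula should match the exact identity $\mathbb{E}(X_n)=a_{n+1}/a_n$ displayed in \eqref{E:EXn-pfm}, and analogously the second-moment identity mentioned just after it.

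The main obstacle is the bookkeeping in this second-order Laplace expansion. The integrand of the saddle-point representation contains an infinite product whose logarithmic derivatives at the saddle must be evaluated up to order $n^{-1}$, and one must track how three separate contributions — the $v$-induced shift of the saddle point (which controls the $\alpha_1=\mu$, $\alpha_2=\mu^2$ leading coefficients), the next Hermite correction of the Gaussian integral, and the first-order term in the Stirling expansion of $n!$ — combine to produce the clean closed-form constants $-\lambda_2/\lambda_1^2$ and $-\tfrac14+\lambda_2/(2\lambda_1^2)$. A secondary technical point, inherited from Theorem~\ref{T:dimdis}, is ensuring that the implied $O(n^{-1})$ is uniform under the two differentiations in $v$ near $v=1$; this requires extending the uniformity estimates of \cite{Hwang2020} one Taylor order further in the auxiliary parameter.
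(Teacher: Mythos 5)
Your plan goes through the right first step (differentiate $F(z,v)$ in $v$ at $v=1$), but then diverges sharply from the paper, and in a way that makes the argument much heavier than necessary. After differentiating \eqref{E:Fzv0}, the inner sum $\sum_{1\le l\le k}\Lambda(z)^{-l}$ is a finite geometric sum; once you write it as $-1+\frac{1-\Lambda(z)^{-k-1}}{1-\Lambda(z)^{-1}}$ and fold the factor $1-\Lambda(z)^{-k-1}$ into the product, the whole series telescopes and you obtain the \emph{exact} algebraic identity $M_1(z)=\frac{M_0(z)-\Lambda(z)}{\Lambda(z)-1}$ (and an analogous rational-in-$\Lambda$ identity for $M_2$, and in general \eqref{E:Mhz}). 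That is the paper's key observation, and it means no new saddle-point analysis is required at all: one simply Laurent-expands $\frac1{\Lambda(z)-1}=\frac1{\lambda_1 z}-\frac{\lambda_2}{\lambda_1^2}+O(z)$ near $z=0$, takes $[z^n]$, and feeds the ratio $a_{n+1}/a_n$ (and $a_{n+2}/a_n$) into the refined expansion \eqref{E:an-ae}. The constant $\frac{3}{2}$ then arises transparently from $\frac{a_{n+1}}{a_n}\sim \lambda_1\mu\,(n+1)\bigl(\tfrac{n+1}{n}\bigr)^{1/2}$, i.e.\ $(n+1)+\tfrac12$, and the constants $-\lambda_2/\lambda_1^2$ and $-\tfrac14+\lambda_2/(2\lambda_1^2)$ come directly from the Laurent coefficients of $1/(\Lambda-1)$ and $1/((\Lambda-1)(\Lambda^2-1))$ rather than from a delicate second-order Gaussian correction.

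Your plan of applying Andrews--Jel\'\i nek to the differentiated series and rerunning the two-stage saddle point on each one is not obviously wrong, but it is substantially harder: you would have to first bring the derivative series into a form to which the transformation applies (the extra weight $\sum_l\Lambda^{-l}$ is not of the finite-product type in \eqref{E:Andrews}), and any clean manipulation to do so will in effect re-derive \eqref{E:M1}. You would also have to carry the local Laplace expansion one order further uniformly in $v$, which is the kind of bookkeeping the paper deliberately avoids for the moments (it is only done once, for $a_n$ itself, in \eqref{E:an-ae}). Finally, the telescoping identity is what gives the combinatorial byproducts of Section~\ref{S:mu-var} — the exact relation $\mu_n=a_{n+1}/a_n$ in the primitive case and Proposition~\ref{P:pnk} — which your route does not recover. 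So: different route, would likely reach the same destination with substantially more machinery, and you should notice the telescoping, which collapses most of the work.
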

Note that the dependence of $\mathbb{E}(X_n)$ and $\mathbb{V}(X_n)$ 
on $\Lambda$ is weak: only the ratio of $\lambda_2$ and $\lambda_1^2$ 
appears in the constant terms, and similarly for higher central 
moments. For example, 
\begin{align*}
    \mathbb{E}\lpa{X_n-\mu_n}^3
    &= \frac{\pi^4-54\pi^2+432}{\pi^6}\, \Lpa{n+\frac32}
    + \frac1{12}
    -\frac{\lambda_2}{6\lambda_1^2}
    +O\lpa{n^{-1}},\\
    \mathbb{E}\lpa{X_n-\mu_n}^4
    &= 3\mathbb{V}(X_n)^2
    +\Lpa{6\sigma^4-\frac{\mu^2}{12}}n+O(1).
\end{align*}    
In principle, such calculations can be carried out further for all 
higher central moments and lead possibly to an alternative proof of 
\eqref{E:dimdis} by the method of moments. But the cancellations 
involved in such a process are very heavy and complex, so we will 
instead work out an analytic, cancellation-free approach. Other 
$\lambda_j$'s will appear in lower-order terms.

Interestingly, the source of the seemingly strange but omnipresent
ratio ``$\frac{\lambda_2}{\lambda_1^2}$'' in the second-order terms
will be indicated in Section \ref{ss:source}.

The same types of normal limit results are expected to hold for other
classes of FMs, and we will briefly examine two of them:
self-dual $\Lambda$-FMs (or persymmetric, namely, symmetric
with respect to the anti-diagonal), and $\Lambda$-FMs whose smallest
nonzero entries are $2$. The corresponding central limit theorems are
summarized in the Table~\ref{T:tab2}; see Section~\ref{S:final} for
more information.

\begin{table}[!ht]
\begin{center}
\begin{tabular}{ccc}
\makecell{$\Lambda$-FMs with $\lambda_1>0$} & 
Self-dual $\Lambda$-FMs with $\lambda_1>0$& 
\makecell{$\Lambda$-FMs with $\lambda_1=0$, $\lambda_2>0$}\\ 
(Theorem~\ref{T:dimdis}) &
(Theorem~\ref{T:dimdis3}) &
(Theorem~\ref{T:dimdis4}) \\ \hline
$\mathscr{N}(\mu n, \sigma^2 n)$ &
$\mathscr{N}(\mu n, 2\sigma^2 n)$ & 
$\mathscr{N}\lpa{\frac12\mu n, \frac12\sigma^2 n}$ \\
\end{tabular}    
\end{center}
\caption{A summary of the central limit theorems for the dimension of 
different types of random $\Lambda$-FMs. Note specially the change in 
the mean and the variance coefficients: while the halving in the last 
column is well expected, the asymptotic doubling of the variance in 
the self-dual FMs comes as a little surprise.}\label{T:tab2}
\end{table}

\subsection{Size distribution of fixed-dimension FMs}

We now address a dual problem: the size distribution of random
$\Lambda$-FMs with the same dimension. The problem is well-defined
when $\Lambda$ is finite and all coefficients of $\Lambda(z)$ are 
positive integers.

\begin{theorem}[Extended open problem 5.5 of \cite{Jelinek2012}]
\label{T:dimdis2}
Assume that $\Lambda(z)$ is a polynomial with positive coefficients 
and $\Lambda(1)\ne 1$, and that all $\Lambda$-FMs of dimension $m$ 
are equally likely. Then the size $Y_m$ of a random matrix is 
asymptotically normally distributed with mean and variance both of 
order $\Theta(m^2)$:
\begin{align}\label{E:dimdis2} 
    \frac{Y_m - \hat\mu m^2}{\hat\sigma m}
    \xrightarrow{d} \mathscr{N}(0,1),
\end{align}
where $\hat \mu,{\hat \sigma}^2>0$ are given by 
\begin{align}\label{E:mup-sigmap}
    (\hat \mu,{\hat \sigma}^2) 
    := \Lpa{\frac{\Lambda'(1)}{2\Lambda(1)},
    \frac12\Lpa{\frac{\Lambda'(1)+\Lambda''(1)}{\Lambda(1)}
    -\Lpa{\frac{\Lambda'(1)}{\Lambda(1)}}^2}}.
\end{align}
\end{theorem}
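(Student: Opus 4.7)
The plan is to express the size generating function
$G_m(z):=\sum_{n\ge0}(\#\{\Lambda\text{-FMs of dimension $m$ and size $n$}\})z^n$
as the product of an iid-sum generating function and a slowly-varying correction, and to deduce the CLT via the continuity theorem for characteristic functions. Applying inclusion-exclusion to the ``no zero row'' and ``no zero column'' constraints yields
\begin{equation*}
G_m(z)=\Lambda(z)^{\binom{m+1}{2}}H_m(z),\qquad
H_m(z):=\sum_{S,T\subseteq\{1,\ldots,m\}}(-1)^{|S|+|T|}\Lambda(z)^{-N(S,T)},
\end{equation*}
where $N(S,T)$ counts the upper-triangular positions $(i,j)$ with $i\in S$ or $j\in T$. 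The prefactor is the generating function of an unrestricted upper-triangular array of $\binom{m+1}{2}$ iid $\Lambda$-weighted cells, while $H_m(z)$ encodes the FM validity.

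First I would show that $H_m(z)$ converges to a nonzero analytic limit $H_\infty(z)$ uniformly on a small complex neighborhood of $z=1$. Three facts drive the analysis: (i) since $\Lambda(1)>1$, one has $|\Lambda(z)^{-1}|\le r<1$ for $z$ sufficiently close to $1$; (ii) for $|S|=s$ and $|T|=t$ with $s+t\le m$, the minimum of $N(S,T)$ is $\binom{s+1}{2}+\binom{t+1}{2}$, attained when $S$ consists of the last $s$ rows and $T$ of the first $t$ columns (so that $S\times T$ misses the upper triangle); and (iii) the cancellation ``$1\in S$ forces column $1$ to be zero automatically, hence $\sum_T(-1)^{|T|}\Lambda(z)^{-N(S,T)}=0$'' restricts the effective double sum to $S\subseteq\{2,\ldots,m\}$, and symmetrically $T\subseteq\{1,\ldots,m-1\}$. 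Combining these cancellations with the geometric decay $r^{\binom{s+1}{2}+\binom{t+1}{2}}$ provides an absolutely and uniformly convergent majorant for $H_m(z)$, whence the limit $H_\infty(z)$ exists, is continuous, and is nonzero.

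Next, the probability generating function of $Y_m$ factorizes as
\begin{equation*}
\frac{G_m(z)}{G_m(1)}=\Lpa{\frac{\Lambda(z)}{\Lambda(1)}}^{\binom{m+1}{2}}\cdot\frac{H_m(z)}{H_m(1)}.
\end{equation*}
The leading factor is the PGF of a sum of $\binom{m+1}{2}\sim m^2/2$ iid random variables with per-summand mean $\bar\mu:=\Lambda'(1)/\Lambda(1)=2\hat\mu$ and variance $\bar\sigma^2:=(\Lambda'(1)+\Lambda''(1))/\Lambda(1)-\bar\mu^2=2\hat\sigma^2$; substituting $z=e^{it/(\hat\sigma m)}$ and Taylor-expanding $\log(\Lambda(z)/\Lambda(1))$ to second order at $z=1$ yields the Gaussian characteristic function after the appropriate centering at the leading-order mean $\hat\mu m^2$. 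The second factor tends to $H_\infty(1)/H_\infty(1)=1$ uniformly in $t$ on compacts by continuity of $H_\infty$ at $z=1$, and so does not affect the limit law. Precise asymptotic expansions for the mean and variance then follow by differentiating the PGF at $z=1$ and controlling $H_m'(1)/H_m(1)$ via the same uniform bounds.

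The main obstacle is the uniform convergence of $H_m(z)$. Although ignoring the upper-triangular overlap between $S$ and $T$ suggests the product heuristic $H_\infty(z)\approx\prod_{i\ge1}(1-\Lambda(z)^{-i})^2$, the overlap corrections in $N(S,T)$ genuinely contribute and prevent an exact factorization, so one cannot simply invoke a known product identity. The cancellations from ``initial-segment'' patterns in $S$ (and the symmetric ``final-segment'' patterns in $T$) are the indispensable tool for reducing the double sum to an absolutely convergent form; carefully combining these cancellations with the geometric bound constitutes the technical heart of the argument, after which the rest is a routine application of the continuity theorem for characteristic functions.
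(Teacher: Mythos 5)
Your route is genuinely different from the paper's. The paper works from the $q$-series form \eqref{E:Fzv4} of $F(z,v)$, performs a partial-fraction decomposition of $\prod_{1\le j\le k}(\Lambda(z)^j-1)/\lpa{1+v(\Lambda(z)^j-1)}$ in the variable $v$, and rearranges the resulting double sum to read off $[v^m]F(z,v)$ in the closed form \eqref{E:Ym-qpa}, namely $H(z)\Lambda(z)^{\binom{m+1}{2}}\lpa{1+O\lpa{m|\Lambda(z)|^{-m}}}$ with an explicit $H$; the Quasi-powers Theorem then finishes. You instead apply inclusion--exclusion on the zero-row/zero-column constraints to reach the same structural factorization $G_m(z)=\Lambda(z)^{\binom{m+1}{2}}H_m(z)$, with $H_m$ an alternating sum over subset pairs $(S,T)$. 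The inclusion--exclusion set-up is correct, and so is the initial-segment/final-segment involution (if $1\in S$ then cell $(1,1)$ is forced to vanish, column $1$ is automatically empty, and the $T$-sum telescopes; dually for $m\in T$), which restricts to $S\subseteq\{2,\dots,m\}$, $T\subseteq\{1,\dots,m-1\}$.

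The gap is in the convergence argument, which you yourself call the technical heart. The number of pairs with $|S|=s$, $|T|=t$ is $\binom{m-1}{s}\binom{m-1}{t}$, so ``(number of pairs)$\times$(value at the minimizing pair)'' $=\binom{m-1}{s}\binom{m-1}{t}\,r^{\binom{s+1}{2}+\binom{t+1}{2}}$ is \emph{not} a majorant that is bounded uniformly in $m$: already at $(s,t)=(0,1)$ it is $\Theta(mr)$. Knowing the minimum of $N(S,T)$ over pairs of given cardinalities is therefore not enough; you need the distribution of $N$-values. The workable estimate is to sum by $N$ directly: since $N(S,\emptyset)=\sum_{i\in S}(m-i+1)$ is a sum of \emph{distinct} parts from $\{1,\dots,m-1\}$, one has $\sum_{S}r^{N(S,\emptyset)}=\prod_{1\le a\le m-1}(1+r^a)\le\prod_{a\ge1}(1+r^a)<\infty$, and for the joint sum one can use $N(S,T)\ge\max\lpa{N(S,\emptyset),N(\emptyset,T)}\ge\tfrac12\lpa{N(S,\emptyset)+N(\emptyset,T)}$, giving $\sum_{S,T}r^{N(S,T)}\le\lpa{\prod_{a\ge1}(1+r^{a/2})}^2$, uniformly in $m$ for $r<1$. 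Note this bound does not rely on the involutions at all, so the cancellations are not what rescues convergence; they matter for identifying and simplifying the limit $H_\infty$, not for absoluteness. With this counting argument substituted, your plan goes through (the nonvanishing of $H_\infty(1)$ and the rate still need a few lines). By comparison, the paper's algebraic route hands you the explicit limit $H(z)$ together with the quantitative error term $O(m|\Lambda(z)|^{-m})$ for free, which is what feeds cleanly into the Quasi-powers Theorem; your route is more elementary and self-contained but requires this extra care on the counting.
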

See Figure~\ref{F:b2} for three different plots of the histograms of
$Y_m$ in the case of binary FMs for which $(\hat \mu,{\hat \sigma}^2)
= (\frac14,\frac18)$. Note that, if $\Lambda(z) = 1+\sum_{1\le j\le
\ell}\lambda_j z^j$ with $\ell\ge1$ is a positive polynomial, then 
\[ 2\hat\sigma^2 =
\frac1{\Lambda(1)} \sum_{1\le j\le
\ell}\Lpa{j-\frac{\Lambda'(1)}{\Lambda(1)}}^2 \lambda_j>0.\]

\begin{figure}[!ht]
\begin{center}
	\begin{tabular}{ccc}
		\includegraphics[height=4cm]{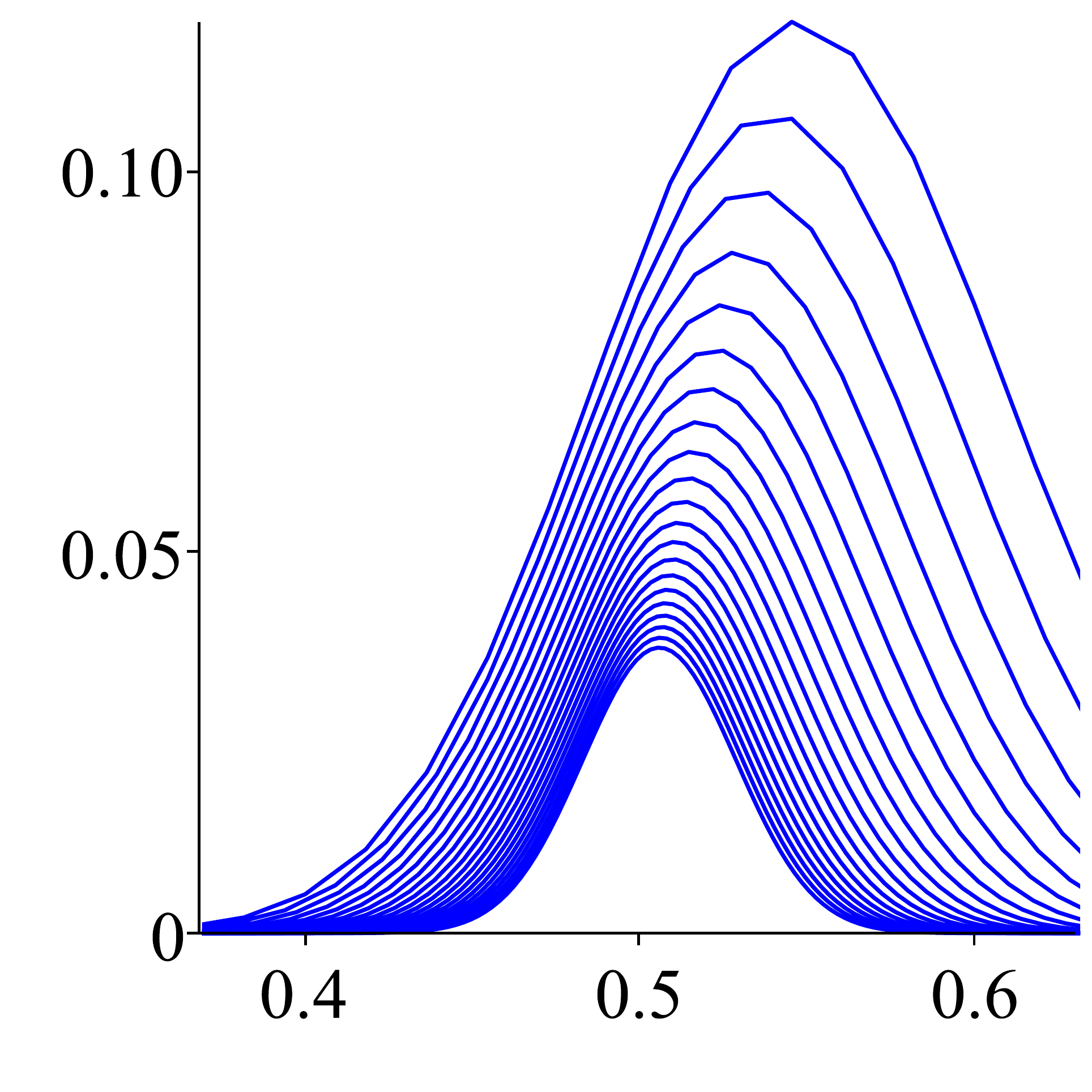} &
		\includegraphics[height=4cm]{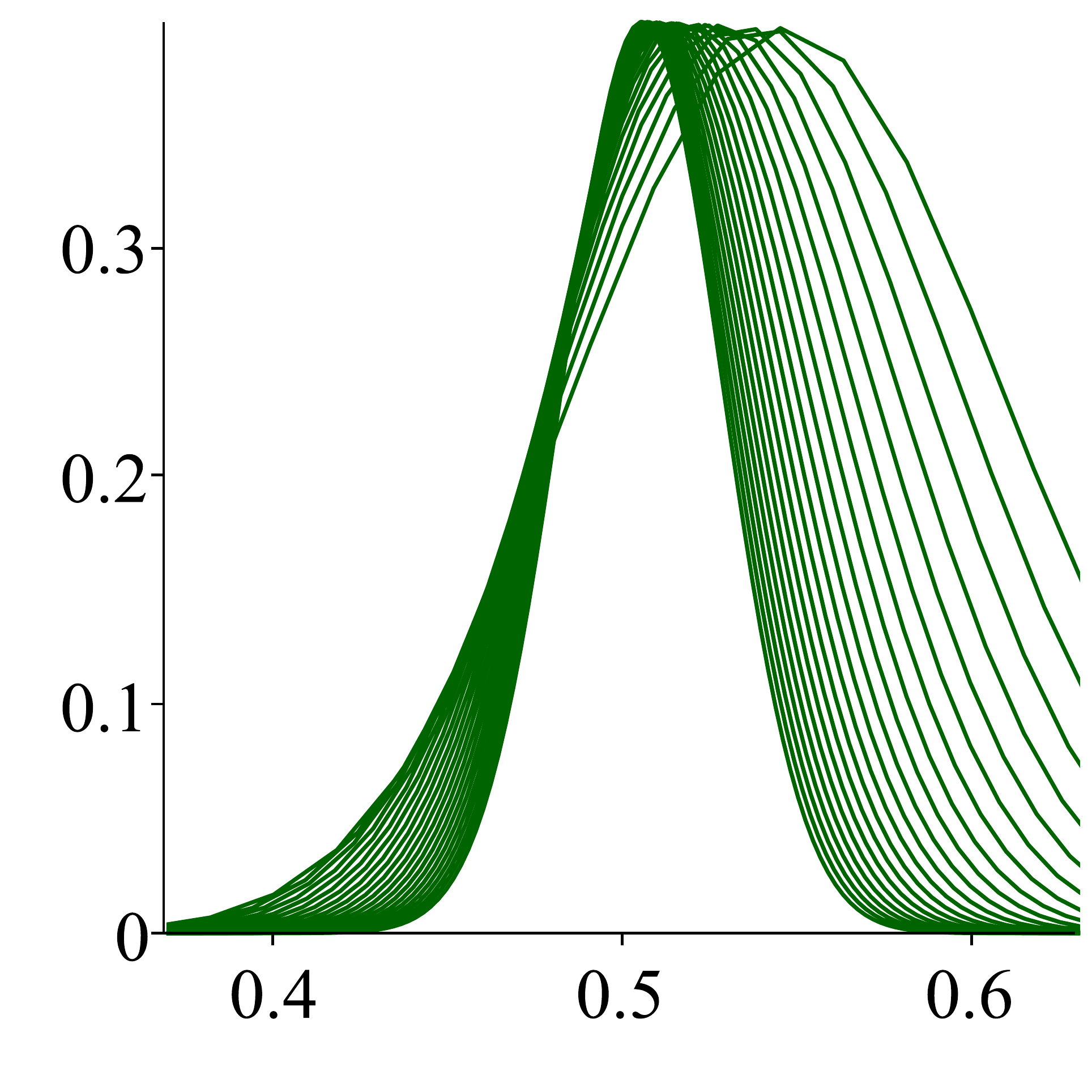} &
		\includegraphics[height=4cm]{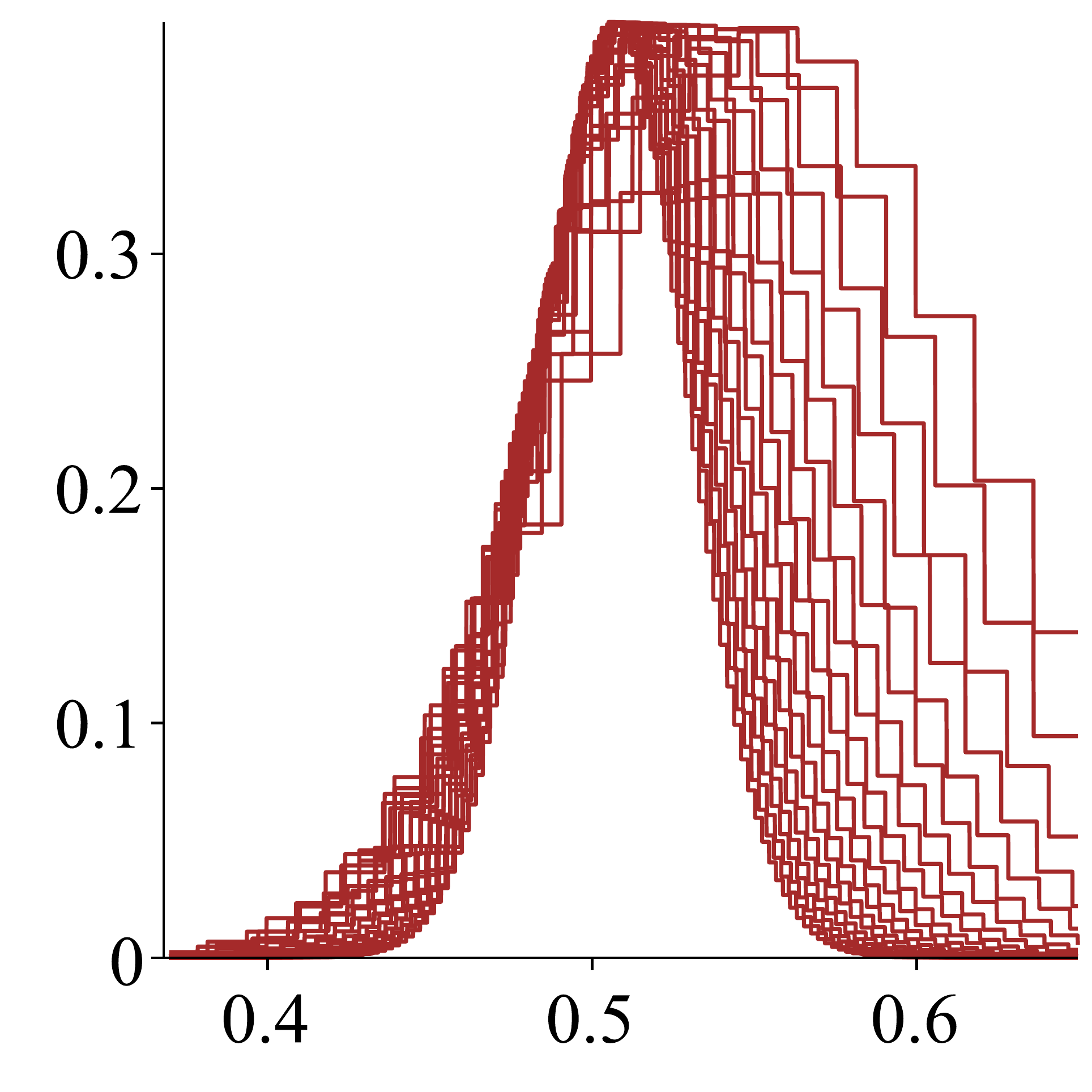} \\
		$\mathbb{P}(X_n=tn)$
		& $\sqrt{\mathbb{V}(X_n)}\,
		\mathbb{P}(X_n=t\mathbb{E}(X_n))$
		& $\sqrt{\mathbb{V}(X_n)}\,
		\mathbb{P}(X_n=\tr{t\mathbb{E}(X_n)})$
	\end{tabular}
\end{center}
\caption{Three different ways of visualizing the asymptotic normality 
of $Y_m$ where the histograms of $Y_m$ are given in the case when 
$\Lambda(z) = 1+z$: for $10\le n\le 30$.} \label{F:b2}
\end{figure}

\begin{remark}
Define the random variable $Y$ by $\mathbb{E}(z^{Y})
=\frac{\Lambda(z)}{\Lambda(1)}$. The quadratic behavior of $Y_m$
naturally suggests the question: ``\emph{what is the probability that
a randomly generated upper triangular matrix of dimension $m$ is
Fishburn when each entry is independently and identically distributed
as $Y$ (except for the upper-left and lower-right corners)?}'' Our
result implies particularly (see \eqref{E:m-dim-fm}) that in the
primitive case (when $Y$ is Bernoulli with mean $\frac12$), the
probability is asymptotic to
\[
	4\sum_{k\ge0}(-1)^{k}2^{-\binom{k+1}2}
	\sum_{0\le j\le k}\prod_{1\le \ell\le j}
	\frac1{1-2^{-\ell}}
	\approx 0.33359\dots.
\]
In other words, \emph{if we fix the two corners on the diagonal of
the matrix to be $1$, and generate all other entries by throwing an
unbiased coin, consistently putting $0$ or $1$ as the entry according
as the coin being head or tail, each independently of the others,
then \emph{more than one third of such matrices are Fishburn}.}
\end{remark}

\subsection{Asymptotic density of connected regular LCDs}

The proof of Theorem \ref{T:dimdis} is based on the saddle-point
approach developed by the first two authors in \cite{Hwang2020} and a
generalization of the Rogers-Fine identity due to Andrews and
Jel\'inek \cite{Andrews2014}, while Theorem \ref{T:dimdis2} follows
from a partial fraction decomposition and is simpler in nature.

It turns out that our saddle-point method is also useful in solving a
conjecture of Stoimenow \cite{Stoimenow1998} that was subsequently
reformulated by Zagier \cite{Zagier2001}, where the enumeration of
chord diagrams was studied in order to derive an upper bound for the
dimension of the Vassiliev invariants space for knots. Based on
numerical evidence, an asymptotic relation for the proportion of
connected regular LCDs (among all regular LCDs) was then conjectured;
see also \cite{Bringmann2014, Zagier2001}.

\begin{theorem}[A conjecture in \cite{Stoimenow1998}]\label{T:genzag}
Let $f_n$ be the number of regular LCDs of size $n$ (which equals the
$n$-th Fishburn number), and $g_n$ be that of connected regular LCDs
of size $n$. Then
\begin{align}\label{E:stoi-conj}
	\frac{g_n}{f_n}=e^{-1}\lpa{1+O\lpa{n^{-1}}}.
\end{align}
\end{theorem}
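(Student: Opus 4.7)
The plan is to carry the saddle-point apparatus used for $f_n$ (cf.\ \cite{Hwang2020} and the proof of Theorem~\ref{T:dimdis}) across to $g_n$. The main idea is that, once $\sum_{n \ge 0} g_n z^n$ is represented by an analytic expression to which the Andrews--Jel\'\i nek transformation can be applied, the resulting Cauchy integral for $g_n$ has the same saddle point and the same leading exponential factor $\exp(\pi^2/(6s))$ (in the variable $s$ defined by $z = 1 - e^{-s}$) as the one for $f_n$, so that the ratio $g_n/f_n$ reduces to a ratio of explicit local Gaussian contributions.

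First I would establish the required analytic identity for $\sum_n g_n z^n$. A starting point is the sequence-style decomposition of a regular LCD into its connected components, giving a formal identity relating $\sum_{n\ge 1} g_n z^n$ to $F(z)$ (the naive candidate being $1-1/F(z)$); this must however be further massaged via Andrews--Jel\'\i nek or a suitable refinement, because the plain formal reciprocal inherits the divergence of the Fishburn series in \eqref{E:genfish} and obscures the saddle-point expansion. The outcome of this step should be a cancellation-free $q$-series for $g_n$ of the same shape as the one in \eqref{E:genfish}, whose dominant singularity under $z = 1 - e^{-s}$ is of the same type as that of $F$.

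Next I would apply the saddle-point method to this $q$-series, following the template of the proof of Theorem~\ref{T:dimdis}. The common exponential factor $\exp(\pi^2/(6s))$ and the common saddle point $s_n \sim \pi/\sqrt{6n}$ immediately yield the common growth order $\mu^n n!$, and the leading constant in the expected asymptotic $g_n \sim \tilde c\, n^{1/2}\mu^n n!$ is extracted from the Gaussian evaluation at $s_n$. I would then verify by direct computation that $\tilde c$ coincides with $c\cdot e^{-1}$, where $c$ is the constant in \eqref{E:c-rho} specialized to $\lambda_1 = \lambda_2 = 1$; this gives $g_n/f_n \to e^{-1}$. The $O(n^{-1})$ error term in \eqref{E:stoi-conj} then follows from the next-order term in the Gaussian expansion and from the same uniform tail bounds needed for Theorem~\ref{T:mu-var}.

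The main obstacle is the first step: one must find a representation of $\sum_n g_n z^n$ to which Andrews--Jel\'\i nek applies and which keeps the constant $e^{-1}$ visible after the local Gaussian evaluation. This difficulty is algebraic/combinatorial rather than analytic, since any workable representation has to survive the violent cancellations responsible for the divergence of the formal power series (exactly as for $F$ itself). Once such a representation is in place, the remaining saddle-point computation parallels the one for $f_n$, and the factor $e^{-1}$ should emerge cleanly from the pre-exponential contribution in the local expansion at $s_n$.
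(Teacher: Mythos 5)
Your plan does not work as stated, because its combinatorial starting point is wrong. Regular LCDs do \emph{not} decompose as sequences of connected regular LCDs: the ``regularity'' constraint (no nested pair with adjacent openers or adjacent closers) interacts across component boundaries, so the naive reciprocal relation $g(z)=1-1/F(z)$ fails already at order $z^4$ (it predicts $14$ rather than $15$ for the fourth Fishburn number). The correct relation, due to Zagier, is the \emph{implicit} functional equation $\Phi(z,g(z))=1$ from \eqref{E:phi}, involving the bivariate series $\Phi(z,v)$; there is no explicit cancellation-free $q$-series for $g(z)$ of the shape you are hoping for, and ``massaging via Andrews--Jel\'\i nek'' cannot repair a decomposition that is false for combinatorial reasons.

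The mechanism that actually produces $e^{-1}$ is also different from what you sketch. After applying \eqref{E:fphi} and the Andrews--Jel\'\i nek identity \eqref{E:Fzv4} to $\Phi$, Zagier's equation becomes the quadratic $g(z)^2+g(z)+1=W(z)$ in \eqref{E:lambdaA}, where $W(z)$ is a $q$-series of Fishburn type whose $k$-th summand carries the \emph{extra} product
$\bar W_k(z)=\prod_{1\le j\le k}\bigl(1+g(z)(1-z)^{-j}\bigr)^{-1}$,
with $g(z)$ appearing \emph{inside} the coefficients. Because $g(z)=z+O(z^2)$, this extra product is a mild perturbation near the saddle $z\sim(\mu n)^{-1}$, $k\sim qn$, and its value there is
$\exp\bigl(-(e^{q/\mu}-1)\bigr)\bigl(1+O(n^{-1/2})\bigr)=e^{-1}\bigl(1+O(n^{-1/2})\bigr)$,
precisely because $q/\mu=\log 2$. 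That single evaluation is the source of the constant; it is not a comparison of ``explicit local Gaussian contributions'' for two independent saddle-point expansions. You would still then need the iteration argument of \eqref{E:gk} to pass from $[z^n]W(z)$ back to $g_n$ across the quadratic (the convolution terms $[z^n]g(z)^2$, etc.\ are suppressed by the factorial growth of $w_n$), which your plan does not address. So the missing ideas are: (i) Zagier's implicit equation in place of a reciprocal decomposition; (ii) recognizing $e^{-1}$ as the saddle value of an implicit Fishburn-type perturbation $\bar W_k$, not of a standalone $q$-series for $g$; and (iii) the iteration step handling the self-referential quadratic.
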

The same limit result also holds for the derangement probability and 
the proportion of connected (ordinary) chord diagrams, a well-known 
result; see for example \cite{Borinsky2018,Stein1978} and 
\cite[\href{https://oeis.org/A068985}{A068985}]{oeis2019}.

Let $g(z) := \sum_{n\ge1}g_n z^n$. Then the first few terms of $g(z)$ 
are given by (see 
\cite[\href{https://oeis.org/A022494}{A022494}]{oeis2019})
\begin{align}\label{E:gz-taylor}
    g(z)
    =z+z^2+2z^3+5z^4+16z^5+63z^6+293z^7+1561z^8
    +9321z^9+\cdots.
\end{align}
Our proof of \eqref{E:stoi-conj} relies crucially on the functional 
equation obtained by Zagier in \cite{Zagier2001}: 
\begin{align}\label{E:phi}
    \Phi(z, g(z))=1,\WITH
    \Phi(z,v)
    :=\frac{1}{1+v}\sum_{k\ge0}\prod_{1\le j\le k}
    \frac{1-(1-z)^j}{1+v(1-z)^j},
\end{align}
together with a generalized Rogers-Fine identity derived by
Andrews and Jel\'\i nek \cite{Andrews2014}. The function $\Phi$ is 
connected to $F$ in \eqref{E:Fzv0} when $\Lambda(z)=(1-z)^{-1}$ by 
\begin{align}\label{E:fphi}
    F(z,v)
    =\frac{1}{v}\Phi\Lpa{z,\frac{1}{v}-1}.
\end{align}
It is through this connection that our analytic techniques can be 
applied to solve the conjecture \eqref{E:stoi-conj}.

This paper is organized as follows. We prove in the next section
Theorem~\ref{S:mu-var} concerning the asymptotics of the expected
dimension and the variance. We also sketch briefly the approach we
developed in \cite{Hwang2020}. Then the normal limit law of the
dimension (Theorem \ref{T:dimdis}) is established in
Section~\ref{S:euler}, and the corresponding dual version in
Section~\ref{S:je}. Stoimenow's conjecture, which is now our
Theorem~\ref{T:genzag}, is confirmed in Section~\ref{S:sto}. Finally,
we describe very briefly in Section \ref{S:final} the limit results
for the dimension in the self-dual case, and the case when
$\lambda_1=0$, $\lambda_2>0$ and there exists at least one odd number
in the entry-set. We conclude by mentioning other possible
approximation theorems (convergence rates in the central theorems and
local limit theorems).

\emph{Throughout this paper, the generic symbols $c, \ve>0$ always
denote a constant and small quantity, respectively, whose values may
not be the same at each occurrence. In contrast, the pair
$(\mu,\sigma^2)$ always stands for the same value given in
\eqref{E:dimdis}. Furthermore, the notation $a_n\asymp b_n$ means
that the ratio $a_n/b_n$ remains bounded and not equal to zero as $n$ tends to
infinity.}

\section{The mean and the variance of the dimension}
\label{S:mu-var}

We prove Theorem~\ref{T:mu-var} in this section, together with a few 
related properties. 

\subsection{The generating functions of moments}

Define 
\begin{align}\label{E:mgf}
    M_h(z) := \sum_{n\ge0}a_n\mathbb{E}(X_n^h)z^n
    = \partial_s^h F(z,e^s)|_{s=0}\qquad(h=0,1,\dots)
\end{align}
to be (up to the normalizing factor $a_n$) the generating function of 
the $h$th moment of $X_n$, where $F$ is given in \eqref{E:Fzv0}. In 
particular, $M_0(z)$ corresponds to the generating function in 
\eqref{E:an-counts}.

\begin{lemma} The generating function of the $h$th moment of $X_n$ 
satisfies  
\begin{align}\label{E:Mhz}
    M_h(z) = U_h(z) M_0(z) + V_h(z),
\end{align}
for $h\ge0$, where ($\Stirling{h}{j}$ are the Stirling numbers of 
the second kind)
\begin{align}\label{Uhz}
    U_h(z) &:= \sum_{0\le \ell \le h}
    \Stirling{h+1}{\ell+1}(-1)^{h-\ell}
    \ell!\prod_{1\le j\le\ell}
    \frac{1}{1-\Lambda(z)^{-j}} \\
    V_h(z) &:= \sum_{0\le \ell \le h}
    \Stirling{h+1}{\ell+1}(-1)^{h+1-\ell}
    \ell!\sum_{0\le k<\ell}\prod_{k< j\le\ell}
    \frac1{1-\Lambda(z)^{-j}}.\nonumber
\end{align}
\end{lemma}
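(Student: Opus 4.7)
The plan is to prove the lemma at the level of the exponential generating function in $s$, encoding all orders $h$ simultaneously into a single functional identity. Concretely, I would rewrite $F(z,e^s)$ as a power series in the auxiliary variable $w := 1-e^{-s}$, reach a closed form via a $q$-binomial manipulation, then read off $U_h$ and $V_h$ by matching Taylor coefficients in $s$.

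The first step is to rescale each factor of the defining sum for $F$ by $\Lambda^{-j}$, which produces the equivalent representation
\[
F(z,v) = \sum_{k\ge 0}\prod_{j=1}^k\frac{1-\Lambda(z)^{-j}}{1+u\Lambda(z)^{-j}}, \qquad u = \frac{1}{v}-1.
\]
Setting $v=e^s$, $w=1-e^{-s}$ (so that $u=-w$), and $q:=\Lambda(z)^{-1}$, this becomes $F(z,e^s)=\sum_k\prod_j(1-q^j)/(1-wq^j)$. Multiplying by $e^s=1/(1-w)$ absorbs the leading factor into the denominator and yields
\[
e^s F(z,e^s) = \sum_{k\ge 0}\frac{(q;q)_k}{(w;q)_{k+1}}.
\]
The second step applies the classical $q$-binomial expansion $1/(w;q)_{k+1}=\sum_{n\ge 0}\binom{k+n}{n}_q w^n$, swaps the $k$- and $n$-summations, and uses $(q;q)_k\binom{k+n}{n}_q=(q;q)_{k+n}/(q;q)_n$ together with $\sum_{m\ge n}(q;q)_m=M_0(z)-\sum_{m=0}^{n-1}(q;q)_m$ to obtain
\[
e^s F(z,e^s) = \sum_{n\ge 0}\frac{w^n}{(q;q)_n}\Bigl(M_0(z)-\sum_{m=0}^{n-1}(q;q)_m\Bigr).
\]

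The third step multiplies through by $e^{-s}$ and expands each product $w^\ell e^{-s}=(1-e^{-s})^\ell e^{-s}$ via the identity
\[
(1-e^{-s})^\ell e^{-s} = \ell!\sum_{h\ge 0}(-1)^{h-\ell}\Stirling{h+1}{\ell+1}\frac{s^h}{h!},
\]
which comes from differentiating the standard exponential generating function $(e^{-s}-1)^{\ell+1}/(\ell+1)!=\sum_m(-1)^{m-\ell-1}\Stirling{m}{\ell+1}s^m/m!$. Extracting the coefficient of $s^h/h!$ on both sides, and rewriting $1/(q;q)_\ell=\prod_{j=1}^\ell(1-\Lambda^{-j})^{-1}$ and $(q;q)_k/(q;q)_\ell=\prod_{j=k+1}^\ell(1-\Lambda^{-j})^{-1}$, reproduces exactly the stated formulas for $U_h(z)$ and $V_h(z)$.

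The main obstacle is technical rather than conceptual: each summation interchange must be legitimate as a formal identity of power series in $z$. This is guaranteed by the implicit hypothesis $\Lambda(0)=1$, which forces $1-\Lambda(z)^{-j}=O(z)$ and hence $(q;q)_k=O(z^k)$, so that each coefficient $[z^n]$ on either side reduces to a finite sum and convergence in the $z$-adic topology is automatic. Nothing deeper than the $q$-binomial theorem and the standard Stirling-number EGF is required here; the more delicate Andrews--Jel\'\i nek transformation mentioned in the introduction is reserved for the subsequent asymptotic analysis.
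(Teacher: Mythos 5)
Your proof is correct, and it takes a genuinely different route from the paper. The paper differentiates $F(z,v)$ directly at $v=1$, works out $M_1$ and $M_2$ explicitly by algebraic rearrangement, and then asserts that the general case follows ``by the same arguments and induction'' without carrying out that induction. You instead treat all orders $h$ at once: after the change of variables $u=v^{-1}-1$ and $w=1-e^{-s}$, you reduce $e^sF(z,e^s)$ to $\sum_k (q;q)_k/(w;q)_{k+1}$ with $q=\Lambda(z)^{-1}$, expand by the $q$-binomial theorem, swap sums, and then read off the coefficient of $s^h/h!$ via the Stirling EGF $(1-e^{-s})^\ell e^{-s}=\ell!\sum_h(-1)^{h-\ell}\Stirling{h+1}{\ell+1}s^h/h!$. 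I checked the key steps: the rewriting of each factor as $\frac{1-q^j}{1+uq^j}$ is correct, $(q;q)_k\binom{k+n}{n}_q=(q;q)_{k+n}/(q;q)_n$ collapses the inner sum to $M_0(z)-\sum_{m<n}(q;q)_m$, and the resulting coefficient extraction yields exactly the stated $U_h$ and $V_h$ (in particular it reproduces $M_1=(M_0-\Lambda)/(\Lambda-1)$). Your remark about formal legitimacy is the right one: since $1-\Lambda(z)^{-j}=O(z)$, the inner $k$-sum converges $z$-adically after the interchange, even though the individual factors $1/(q;q)_\ell$ are Laurent series with poles that cancel in the combination $U_hM_0+V_h$. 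The net effect is that your argument is both shorter and more complete than the paper's: it avoids the implicit induction entirely and makes the provenance of the Stirling numbers transparent, at the modest cost of invoking the $q$-binomial expansion rather than plain calculus.
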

\begin{proof}
By taking the derivative with respect to $v$ on both sides of 
\eqref{E:Fzv0} and then substituting $v=1$, we obtain 
\begin{align*}
    M_1(z) &= \partial_vF(z,v)\bigr|_{v=1}
    = \sum_{k\ge0}\llpa{\sum_{1\le l\le k}\Lambda(z)^{-l}}
	\llpa{\prod_{1\le j\le k}\lpa{1-\Lambda(z)^{-j}}}\\
	&= \sum_{k\ge0}
	\llpa{-1+\frac{1-\Lambda(z)^{-k-1}}{1-\Lambda(z)^{-1}}}
	\llpa{\prod_{1\le j\le k}\lpa{1-\Lambda(z)^{-j}}}.
\end{align*}  
It follows that 
\begin{align}\label{E:M1}
	M_1(z) = \frac{M_0(z)-\Lambda(z)}
    {\Lambda(z)-1}.
\end{align}
In a similar way, 
\begin{align*}
    M_2(z) &=\lpa{\partial_v^2F(z,v)+\partial_vF(z,v)}\bigr|_{v=1}\\
    &= \sum_{k\ge0}
	\llpa{1-3\frac{1-\Lambda(z)^{-k-1}}{1-\Lambda(z)^{-1}}
    +2\frac{\lpa{1-\Lambda(z)^{-k-1}}\lpa{1-\Lambda(z)^{-k-2}}}
    {\lpa{1-\Lambda(z)^{-1}}\lpa{1-\Lambda(z)^{-2}}}}
	\llpa{\prod_{1\le j\le k}\lpa{1-\Lambda(z)^{-j}}}.
\end{align*}
Thus 
\begin{align}\nonumber
    M_2(z) &= M_0(z) 
    -\frac{3\lpa{M_0(z) - 1}}{1-\Lambda(z)^{-1}}
    +\frac{2\lpa{M_0(z) - 2 +\Lambda(z)^{-1}}}
    {\lpa{1-\Lambda(z)^{-1}}\lpa{1-\Lambda(z)^{-2}}} \\
    &= \frac{1+2\Lambda(z)-\Lambda(z)^2}
    {\lpa{\Lambda(z)-1}\lpa{\Lambda(z)^2-1}}\,M_0(z)-
    \frac{\Lambda(z)\lpa{3-2\Lambda(z)+\Lambda(z)^2}}
    {\lpa{\Lambda(z)-1}\lpa{\Lambda(z)^2-1}}.
    \label{E:M2}
\end{align}
The general form \eqref{E:Mhz} is then proved by the same arguments 
and induction.
\end{proof}
%

\subsection{Combinatorial interpretations}

Recall that $a_n$ and $\mu_n$ are defined in \eqref{E:an-counts} and
(\ref{E:EXn-pfm}), respectively. From \eqref{E:M1}, we have the
identity
\[
	\sum_{1\le j<n}\lambda_j a_{n-j}\mu_{n-j}
	= a_n - \lambda_n\qquad(n\ge1),
\]
where $a_n\mu_n = [z^n]M_1(z)$. In particular, in the primitive case 
when $\Lambda(z)=1+z$, we have a surprisingly simple identity for  
the expected dimension:
\[
	\mu_n = \frac{a_{n+1}}{a_n},
\]
or, in words, \emph{the expected dimension equals the ratio between
the number of primitive FMs of size $n+1$ and that of size $n$.} 

Similarly, for the second moment, we have the identity
\[
    a_n\mathbb{E}\lpa{X_n^2}
    +2a_{n+1}\mathbb{E}\lpa{X_{n+1}^2}
    = 2a_{n+3}-a_{n+1}.
\]
These simple relations certainly demand for combinatorial 
interpretations, which are given in the following forms. 

\begin{proposition}\label{P:pnk}
Let $p_{n,k}$ denote the number of primitive FMs of size $n$ and 
dimension $k$. Then for $n\ge1$
\begin{align}\label{E:dimid} 
	a_{n+1}&=\sum_{1\le k\le n}kp_{n,k},\\
    a_{n+3} &=\sum_{1\le k\le n}\binom{k+1}2p_{n,k}
    +\sum_{1\le k\le n+1}k^2p_{n+1,k}.\label{E:dimid2} 
\end{align}
\end{proposition}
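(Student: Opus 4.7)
The plan is to prove both identities by specializing the generating-function formulas \eqref{E:M1} and \eqref{E:M2} to the primitive case $\Lambda(z)=1+z$, in which the right-hand sides collapse to simple rational functions in $z$ and $M_0(z)$.

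\textbf{Step 1 (the linear identity).} Substituting $\Lambda(z)=1+z$ into \eqref{E:M1} reduces the denominator $\Lambda(z)-1$ to $z$, giving $zM_1(z)=M_0(z)-1-z$. Since by the definition \eqref{E:mgf} we have $M_1(z)=\sum_{n\ge 0}\lpa{\sum_{k}kp_{n,k}}z^n$ and $M_0(z)=\sum_{n\ge0}a_nz^n$, reading off the coefficient of $z^{n+1}$ on both sides for $n\ge 1$ yields \eqref{E:dimid} at once.

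\textbf{Step 2 (the quadratic identity).} Using $\binom{k+1}{2}=\tfrac12(k^2+k)$, the first sum on the right-hand side of \eqref{E:dimid2} equals $[z^n]\tfrac12(M_1(z)+M_2(z))$, while the second equals $[z^{n+1}]M_2(z)$. Therefore \eqref{E:dimid2} is equivalent to the formal identity
\[
\tfrac{z^3}{2}M_1(z)+\tfrac{z^2(z+2)}{2}M_2(z)=M_0(z)-R(z),
\]
where $R(z)$ is a polynomial of degree at most $3$. Substituting $\Lambda(z)=1+z$ into \eqref{E:M2} gives the explicit form
\[
M_2(z)=\frac{2-z^2}{z^2(2+z)}\,M_0(z)-\frac{(1+z)(2+z^2)}{z^2(2+z)},
\]
and a short algebraic manipulation, relying on the cancellation $z^2+(2-z^2)=2$ for the coefficient of $M_0(z)$, yields $R(z)=1+z+z^2+z^3$. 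Extracting $[z^{n+3}]$ for $n\ge 1$ (so that the degree-$3$ correction does not contribute) then completes the proof of \eqref{E:dimid2}.

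\textbf{Main obstacle.} The argument is largely mechanical once \eqref{E:M1} and \eqref{E:M2} are in hand; the only subtle point is the cancellation in Step 2, where the a priori rational $M_0(z)$-coefficient collapses to the constant $1$, leaving only a polynomial correction. A more illuminating proof would be combinatorial: for \eqref{E:dimid}, one would exhibit an explicit bijection between pairs $(M,j)$ with $M$ a primitive FM of size $n$ and $1\le j\le\dim(M)$ and primitive FMs of size $n+1$; for \eqref{E:dimid2}, a two-parameter marking scheme producing primitive FMs of size $n+3$ would be required. Constructing such bijections---perhaps through the Dukes--Parviainen correspondence with ascent sequences---would be considerably more delicate than the generating-function route sketched above, and this is the real challenge one would face if a bijective interpretation is sought rather than the concise algebraic verification.
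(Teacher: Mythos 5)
Your generating-function computation is correct: $zM_1(z)=M_0(z)-1-z$ gives \eqref{E:dimid} upon extracting $[z^{n+1}]$, and the identity $\tfrac{z^3}{2}M_1(z)+\tfrac{z^2(z+2)}{2}M_2(z)=M_0(z)-(1+z+z^2+z^3)$ (which hinges on the cancellation $z^2+(2-z^2)=2$ you point out) gives \eqref{E:dimid2} upon extracting $[z^{n+3}]$. This is, however, the opposite of what the paper does. The paper derives the same algebraic relations informally in the prose immediately before the Proposition (from \eqref{E:M1} and \eqref{E:M2}, exactly as you do), but then deliberately states and proves Proposition~\ref{P:pnk} \emph{combinatorially}, precisely because, in its words, these relations ``demand combinatorial interpretations.'' The paper passes to primitive ascent sequences, so that $p_{n,k}$ counts those of length $n$ with $k-1$ ascents: \eqref{E:dimid} follows because appending a single entry $x\in\{0,\dots,k\}\setminus\{s_n\}$ gives exactly $k$ extensions to length $n+1$, each length-$(n+1)$ primitive ascent sequence arising once; \eqref{E:dimid2} follows by splitting length-$(n+3)$ primitive ascent sequences according to whether deleting the penultimate entry still gives an ascent sequence, yielding the two summands $k^2p_{n+1,k}$ and $\binom{k+1}{2}p_{n,k}$. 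Your closing paragraph correctly anticipates that a bijective argument through the ascent-sequence correspondence is what is really wanted here; where you judge this ``considerably more delicate,'' the paper carries it out in a few lines. The trade-off is as you describe: your route is short and mechanical once \eqref{E:M1}--\eqref{E:M2} are in hand, while the paper's bijective route supplies the explanatory content that justifies isolating the identities as a Proposition at all.
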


Among the diverse Fishburn structures, we find it simpler to
interpret \eqref{E:dimid} and \eqref{E:dimid2} in the language of
ascent sequences, listed in Table~\ref{T:stat}. We can then translate
the recursive construction on primitive ascent sequences into
primitive FMs via the bijection in \cite{Dukes2010}.

\begin{definition}[Ascent sequence]\label{D:asc}
Let  $\mathcal{I}_n$ be the set of inversion sequences of length $n$, 
namely,
\begin{align*}
    \mathcal{I}_n
    := \{s=(s_1,s_2,\ldots,s_n):0\le s_j<j, 1\le j\le n\},
\end{align*}
For any sequence $s\in\mathcal{I}_n$, let
\begin{align}\label{E:asc}
    \asc(s) &:=|\{1\le j<n: s_j<s_{j+1}\}|
\end{align}
be the number of {\bf asc}ents of $s$. An \emph{inversion sequence}
$s\in \mathcal{I}_n$ is an \emph{ascent sequence} if for all $2\le j
\le n$, $s_j$ satisfies
\begin{align*}
	s_j\le \asc(s_1,s_2,\ldots, s_{j-1})+1.
\end{align*} 
An ascent sequence is \emph{primitive} if no consecutive entries are 
identical.
\end{definition}

\begin{proof} (Proposition~\ref{P:pnk})
It is known (see \cite{Bousquet-Melou2010, Dukes2010}) that $p_{n,k}$
also enumerates the number of primitive ascent sequences with $k-1$
ascents. For instance, $p_{4,3}=4$: the corresponding primitive 
ascent sequences are $0121$, $0120$, $0102$ and $0101$ and they are
in bijection with the following primitive FMs from left to right, 
respectively.
\[
    \begin{pmatrix*}[r]
	1 \, 0 \, 0\\
	  \, 1 \, 1\\
	  \,   \, 1
	\end{pmatrix*}\;
    \begin{pmatrix*}[r]
	1 \, 0 \, 1\\
	  \, 1 \, 0\\
	  \,   \, 1
	\end{pmatrix*}\;
    \begin{pmatrix*}[r]
	1 \, 1 \, 0\\
	  \, 1 \, 0\\
	  \,   \, 1
	\end{pmatrix*}\;
    \begin{pmatrix*}[r]
	1 \, 1 \, 0\\
	  \, 0 \, 1\\
	  \,   \, 1
	\end{pmatrix*}
\]

Given a primitive ascent sequence $s$ of length $n$ and with $k-1$
ascents, we add a new entry at the end of $s$, which can be any
integer from $[0,k]$ but not equal to the last entry $s_n$ of $s$. In
other words, there are $k$ possible ways to add such an integer so
that the resulting sequence is a primitive ascent sequence of length
$n+1$, which leads to (\ref{E:dimid}).

Now we extend the same proof to show \eqref{E:dimid2}. Given a
primitive ascent sequence $s$ of length $n+1$ and with $k-1$
ascents, we add two entries $x, y$ at the end of $s$, where $0\le x\le
k$, $x\ne s_{n+1}$, and $0\le y\le k$, $y\ne x$. That is, there are
$k^2$ possible values for the pair $(x,y)$. By Definition
\ref{D:asc}, the resulting sequence $s^*=s_1\ldots s_{n+1} xy$ is a
primitive ascent sequence of length $n+3$ such that if the 
penultimate entry is removed, the resulting sequence is still an 
ascent sequence. 

On the other hand, given a primitive ascent sequence $s$ of length
$n$ and with $k-1$ ascents, we add three entries $x, y, z$ at the end
of $s$ so that the resulting sequence $s^*=s_1\ldots s_n xyz$ is not
an ascent sequence if the penultimate entry is removed, i.e., $s^*$
satisfies $x<y<z=\asc(s^*)$. If $y=k+1$, then $s_n<x\le k$ and
$z=k+2$, implying that there are $k-s_n$ possible choices for $x$;
otherwise $0\le y\le k$. Since $0\le x<y\le k$ and $x\ne s_n$, there
are $\frac12k(k-1)+s_n$ different values for the pair $(x,y)$ and
$z=k+1$. It follows that there are in total
$\frac12k(k-1)+k=\frac12k(k+1)$ choices for $(x,y)$, and the
resulting sequence $s^*=s_1\ldots s_n xyz$ is a primitive ascent
sequence.

Since any primitive ascent sequence of length $n+3$ can be produced
by either construction, we thus conclude the identity
\eqref{E:dimid2}. 
\end{proof}

\begin{remark}
When $\Lambda(z)=(1-z)^{-1}$, we have instead the pair of relations
\[
\left\{
\begin{split}
    a_{n+1}-a_n&=\sum_{1\le k\le n}k\bar p_{n,k},\\
    a_{n+3}-a_{n+2}
    &=\sum_{1\le k\le n+1}k(k+2)\bar p_{n+1,k}
    -\sum_{1\le k\le n}\binom{k+1}{2}\bar p_{n,k}.
\end{split}\right.
\]
Such $\bar p_{n,k}$ denotes the number of size-$n$ FMs of dimension 
$k$; see \cite[\href{https://oeis.org/A137251}{A137251}]{oeis2019}. 
Similar combinatorial interpretations can be given as in the 
primitive case. 
\end{remark}

\subsection{The two-stage saddle-point approach}\label{S:2stage}

For self-containedness and to pave the way for proving the asymptotic
normality of the dimension, we sketch here the major steps of the
two-stage saddle-point method developed in \cite{Hwang2020} for
\eqref{E:an-counts}, at the same time also indicating how to
obtain a finer asymptotic expansion for $a_n$.

\subsubsection{$q$-series transformation}

The approach starts from the generating function \eqref{E:an-counts},
which contains nevertheless terms with negative coefficients in the
Taylor expansion of $1-\Lambda(z)^{-j}$, which in turn, after
multiplication over $1\le j\le k$, results in alternating terms that
produce severe cancellations in the final summation; see
\cite{Hwang2020} for more details. Instead of manipulating the heavy
cancellations, it is technically more convenient to work on the
right-hand side of the identity
\begin{align}\label{E:aj-q}
    \sum_{k\ge0}\prod_{1\le j\le k}
    \lpa{1-\Lambda(z)^{-j}}
    = \sum_{k\ge0}\Lambda(z)^{k+1}\prod_{1\le j\le k}
    \lpa{\Lambda(z)^j-1}^2,
\end{align}
as the right-hand side of \eqref{E:aj-q} contains only terms with
nonnegative Taylor coefficients. This identity is obtained by 
applying a $q$-identity due to Andrews and Jel\'\i nek 
\cite[Proposition 2.3]{Andrews2014}: 
\begin{equation}\label{E:Andrews}
\begin{split}
	&\sum_{k\ge0}u^k\prod_{1\le j\le k}
	\left(1-\frac{1}{(1-s)(1-t)^{j-1}}\right)\\
	&\qquad=\sum_{k\ge0}(1-s)(1-t)^k
	\prod_{1\le j\le k}\lpa{\lpa{1-(1-s)(1-t)^{j-1})}
	\lpa{1-u(1-t)^j}},
\end{split}	
\end{equation}
after substituting $u=1$ and $s=t=1-\Lambda(z)$ on both sides. 

\subsubsection{The exponential prototype}
\label{S:tep}

From the transformed generating function \eqref{E:aj-q}, it proves
much simpler to work out first the special case when $\Lambda(z) =
e^z$ (because we assume $\lambda_1>0$). We will see later how to 
recover the asymptotics of $a_n$ in general cases. 

Let 
\begin{align}\label{E:Ekz}
    E(z) := \sum_{k\ge0} E_k(z),\WITH
	E_k(z) := e^{(k+1)z}\prod_{1\le j\le k}
    \lpa{e^{jz}-1}^2,
\end{align}
and  
\[
    e_n := \sum_{0\le k\le\tr{\frac12n}}e_{n,k},\WITH 
    e_{n,k} := [z^n]E_k(z)
    = \frac1{2\pi i}\oint_{|z|=r}z^{-n-1}E_k(z) \dd z,
\]
where $r>0$. The sequence $n!e_n$ is essentially \emph{the number of
labelled interval orders on $n$ points}; see \cite{Brightwell2011}
and \cite[\href{https://oeis.org/A079144}{A079144}]{oeis2019}.

\subsubsection{The saddle-point method}
\label{S:spm}

Define 
\begin{align}\label{E:Iz}
    I(z)
	:=\int_{0}^{z}\frac{t}{1-e^{-t}}\dd t
	= \frac{z^2}2+\text{dilog}(e^{-z}),
\end{align}
where $\text{dilog}(z)$ denotes the dilogarithm function
\[
    \text{dilog}(z) 
	:= \int_0^z \frac{\Log u}{1-u}\,\dd u.
\]	

The asymptotic analysis of $e_n$ is then split into the following 
steps. 

\begin{enumerate}[(i)]\itemsep9pt

\item Apply first the saddle-point bound for the Taylor coefficients 
$e_{n,k}\le r^{-n}E_k(r)$, where $r>0$ solves $nE_k(r)=rE_k'(r)$, 
namely,  
\[
	n-(k+1)r
	= \sum_{1\le j\le k}\frac{2jr}{1-e^{-jr}}
	\sim \frac{2I(kr)}{r}.
\]
By the asymptotic behaviors of $I(x)$ as $x\to0$ and $x\to\infty$, we 
see that such an $r$ exists as long as $0\le k<\tr{\frac12n}$ and 
satisfies $r\asymp (n-2k)(k+1)^{-2}$. 
This choice of $r$ 
then gives ($q=k/n$)
\begin{align}\label{E:enk-spb}
	e_{n,k} = O\lpa{n^{n+1}e^{\phi(q,\mu^{-1})n}},
	\WITH \phi(q,\xi) := 2q\Log(e^{q\xi}-1)-1-\Log\xi,
\end{align}
where $\xi$ is connected to $q$ by the relation $2I(q\xi) = \xi$.

\item Find the positive solution pair $(q,\xi)$ of the equations 
\[
    \partial_q\phi(q,\xi)=0 \,\,\mbox{ and }\,\, 2I(q\xi) = \xi,
\]
so as to maximize $\phi(q,\xi)$. The solution is then given by 
\begin{align}\label{E:qxr}
	(q,\xi) =\lpa{\mu\Log 2, \mu^{-1}}.
\end{align}

\item We then further shrink the dominant range to $|k-qn|\le
n^{\frac12+\ve}$, $\ve>0$, where most contribution to $e_n$ will
come. It suffices to choose $\ve=\frac18$, and show, by the
saddle-point bound \eqref{E:enk-spb} and the concavity of
$\phi(q,\xi)$, that the contribution to $e_n$ of $e_{n,k}$ from the
range $|k-q n|\ge n^{\frac58}$ is asymptotically negligible.

\item In the central range $|k-qn|\le n^{\frac 58}$, show that the 
integral 
\[
    \int_{\substack{z=re^{i\theta}\\
    n^{-\frac38}\le |\theta|\le \pi}} z^{-n-1}E_k(z) \dd z
\]
is asymptotically negligible. The key property used is the following
concentration inequality (see also \cite[Lemma 13]{Hwang2020})
\begin{align}\label{E:Ekz-o}
	|E_k(re^{it})|
	\le E_k(r) \Exp\Lpa{-\frac{(k+1)^2rt^2}{\pi^2}},
\end{align}
uniformly for $k\ge0$, $r>0$ and $|t|\le\pi$.

\item Then inside the ranges $|k-qn|\le n^{\frac58}$, compute the 
integral 
\[
    \int_{\substack{z=re^{i\theta}\\
    |\theta|\le n^{-\frac38}}} z^{-n-1}E_k(z) \dd z
\]
by more precise local expansions, standard Gaussian approximation, 
and term-by-term integration, after deriving a fine asymptotic 
expansion for the saddle-point $r$. 

\item Summing over the asymptotics of $e_{n,k}$ and approximating the 
sum by an integral give \eqref{E:an-counts}.

\item Refine steps (iv) and (v) by using a longer expansion if more 
terms in the asymptotic expansion are desired. 
\end{enumerate}

We then obtain not only \eqref{E:an-counts} when $\Lambda(z)=e^z$ but 
also a refined asymptotic expansion 
\begin{align}\label{E:bn-ae}
	\frac{e_n}{cn^{\frac12}\mu^nn!} 
	= 1+\sum_{1\le j<j_0}\tilde{d}_j n^{-j}+O\lpa{n^{-j_0}},
\end{align}
for any $j_0=1,2,\dots$, where $(c,\mu)$ is given in 
\eqref{E:c-rho} when $\Lambda(z)=e^z$, and, in particular,  
\begin{align*}\small
	\tilde{d}_1 = \frac38+\frac{\pi^2}{144},
    \quad \text{and} \quad \tilde{d}_2 = -\frac{7}{128}
	-\frac{\pi^2}{1152}
    +\frac{\pi^4}{41472}.
\end{align*}

\subsubsection{From $e^z$ back to $\Lambda(z)$}\label{ss:source}

To recover the asymptotics \eqref{E:an-counts} from the special case 
when $\Lambda(z)=e^z$, we use the following change-of-variables 
arguments based on the Cauchy integral representation of $a_n$:
\begin{align}\label{E:Akz}
	a_n &:= \sum_{0\le k\le\tr{\frac12n}}
	[z^n]A_k(z)\WITH
	A_k(z) := \Lambda(z)^{k+1}
	\prod_{1\le j\le k}\left(\Lambda(z)^j-1\right)^2,
\end{align}
We then make the change of variables $\Lambda(z)=e^y$, which is 
locally invertible when $z\sim0$ because $\lambda_1>0$, so that 
$z=\lambda_1^{-1} y\psi(y)$, where $\psi(y)$ satisfies 
\[
    \psi(y) = 1+\Lpa{\frac12-\frac{\lambda_2}{\lambda_1^2}}y
	+\Lpa{\frac16-\frac{\lambda_1^2\lambda_2+\lambda_1\lambda_3-
	2\lambda_2^2}{\lambda_1^4}}y^2+\cdots.
\]
The analyticity of $\Lambda$ also implies the boundedness of $\psi$ 
when $y$ is small. Here we also see the magic constant 
``$\frac{\lambda_2}{\lambda_1^2}$'' appears in the linear term, which 
is the source of all the occurrences in the second-order terms in the 
moments approximations; see \eqref{E:mu} and \eqref{E:var}. Then 
\begin{align*}
	a_n &= \lambda_1^n [y^n]\Psi_n(y)
	\sum_{0\le k\le\tr{\frac12n}} E_k(y)
	= \lambda_1^n [y^n]\Psi_n(y)
	\sum_{0\le k\le n}e_ky^k,	
\end{align*}
where $\Psi_n(y) := \psi(y)^{-n-1}(\psi(y)+y\psi'(y))$.
With $d := -\frac12+\frac{\lambda_2}{\lambda_1^2}$, we have the 
asymptotic expansion
\[
	\Psi_n\Lpa{\frac tn}
	= e^{dt}
	\Lpa{1+\frac{\varpi_1(t)}{n}+\frac{\varpi_2(t)}{n^2}
	+\cdots},
\]
where the $\varpi_j$'s are polynomials in $t$ of degree $2j$. Now
expand each term on the right-hand side at $t=t_0 := \frac{\pi^2}6$,
compute the coefficient of $t^n$ term by term, and then estimate the
corresponding error terms; see \cite{Hwang2020} for details. We then
obtain an asymptotic expansion in decreasing powers of $n$, which,
for easier reference, is stated formally as follows. All steps
involved are readily coded (except for the justification ones).

\begin{proposition} Assume that $\Lambda(z)$ is analytic at $z=0$ and 
$\lambda_1>0$. Then the number of $\Lambda$-FMs of size $n$ satisfies 
the asymptotic expansion 
\begin{align}\label{E:an-ae}
	\frac{a_n}{cn^{\frac12}(\lambda_1\mu)^nn!} 
	= 1+\sum_{1\le j<j_0}d_j n^{-j}+O\lpa{n^{-j_0}},
\end{align}
for any $j_0=1,2,\dots$, where $(c,\mu)$ is given in 
\eqref{E:c-rho}, and, in particular,  
\begin{align}\small \label{E:d1}
	d_1 &= \frac38+\frac{19\lambda_1^2-36\lambda_2}
	{144\lambda_1^2}\,\pi^2
	+\frac{\lambda_1^2+12\lambda_1\lambda_3-12\lambda_2^2}
	{432\lambda_1^4}\,\pi^4,\\
	d_2 &= -\frac{7}{128}
	-\frac{(19\lambda_1^2-36\lambda_2)\pi^2}
    {1152\lambda_1^2}
    -\frac{(35\lambda_1^4+456\lambda_1^2\lambda_2
	+1872\lambda_1\lambda_3-2304\lambda_2^2)\pi^4}
	{41472\lambda_1^4}\nonumber\\
	&\quad+\frac{(7\lambda_1^6-12\lambda_1^4\lambda_2
	+228\lambda_1^3\lambda_3-228\lambda_1^2\lambda_2^2
	+288\lambda_1^2\lambda_4-1008\lambda_1\lambda_2\lambda_3
	+720\lambda_2^3)\pi^6}{62208\lambda_1^6}\nonumber\\
	&\quad-\frac{(5\lambda_1^4-12\lambda_1^2\lambda_2
	+24\lambda_1\lambda_3-12\lambda_2^2) 
	(\lambda_1^4-12\lambda_2\lambda_1^2
	-24\lambda_1\lambda_3+36\lambda_2^2)\pi^8}
	{1492992\lambda_1^8}.\nonumber
\end{align}
\end{proposition}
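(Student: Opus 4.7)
The plan is to follow the two-stage saddle-point approach of Section~\ref{S:2stage}, pushing every step to the precision needed to read off $d_1$ and $d_2$ rather than merely the leading constant $d_0=1$. Using the Andrews--Jel\'\i nek identity \eqref{E:Andrews} I would rewrite $a_n$ as in \eqref{E:Akz}, so that each summand has nonnegative Taylor coefficients and the saddle-point method applies without the violent cancellations of the original formula.

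I would first revisit the exponential prototype $\Lambda(z)=e^z$. The saddle point $(q,\xi)=(\mu\Log 2,\mu^{-1})$ from \eqref{E:qxr} is unchanged, and the concentration inequality \eqref{E:Ekz-o} still rules out the tails $|\theta|\ge n^{-3/8}$ and $|k-qn|\ge n^{5/8}$; what must be sharpened is the local expansion. In the central range one writes the Cauchy integrand $z^{-n-1}E_k(z)$ as $e^{(\text{quadratic})}\bigl(1+n^{-1}P_1+n^{-2}P_2+\cdots\bigr)$ for explicit polynomials $P_j$ in the normalized deviations $(k-qn)/\sqrt n$ and $\theta\sqrt n$, performs termwise Gaussian integration in $\theta$, and then applies Euler--Maclaurin summation in $k$ to arrive at the prototype expansion \eqref{E:bn-ae} with the stated values of $\tilde d_1$ and $\tilde d_2$.

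To reach general $\Lambda$, I would apply the change of variables $\Lambda(z)=e^y$, which is locally invertible at $0$ because $\lambda_1>0$, yielding $z=\lambda_1^{-1}y\psi(y)$ and
\begin{align*}
a_n=\lambda_1^n[y^n]\Psi_n(y)\sum_{k\ge0}e_k y^k,\qquad
\Psi_n(y)=\psi(y)^{-n-1}\bigl(\psi(y)+y\psi'(y)\bigr).
\end{align*}
Setting $y=t/n$ one has $\Psi_n(t/n)=e^{dt}\bigl(1+\varpi_1(t)/n+\varpi_2(t)/n^2+\cdots\bigr)$ with $d=-\frac12+\lambda_2/\lambda_1^2$; inserting the prototype expansion \eqref{E:bn-ae} and performing a saddle-point evaluation around $t_0=\pi^2/6$ then reduces the computation to collecting like powers of $n^{-1}$. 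The resulting rational combinations of $\pi^{2k}$ and the $\lambda_j$'s give exactly \eqref{E:d1}, with the ubiquitous ratio $\lambda_2/\lambda_1^2$ entering through the constant $d$, as anticipated in Section~\ref{ss:source}.

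The main obstacle will be bookkeeping. To reach $d_2$, each of the three expansions --- the Gaussian corrections in $\theta$, the Euler--Maclaurin corrections in $k$, and the $\psi$-series in $y$ --- must be carried out to order $n^{-2}$ with uniform control of remainders, and the polynomial expressions in $\pi^{2}$ and the $\lambda_j$'s produced by these expansions must then be matched against \eqref{E:d1}. All uniformity and tail estimates are inherited verbatim from \cite{Hwang2020} since the saddle point and concentration bound \eqref{E:Ekz-o} are unchanged, so the additional work is mechanical but delicate; in practice I would let a computer algebra system carry out the symbolic cancellations and verify the rational combinations of $\pi^{2k}$ displayed in $d_1$ and $d_2$.
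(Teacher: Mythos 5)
Your proposal reproduces, step by step, the paper's own argument: the Andrews--Jel\'{\i}nek rewriting \eqref{E:Akz}, the two-stage saddle-point refinement of the exponential prototype leading to \eqref{E:bn-ae}, the substitution $\Lambda(z)=e^y$ with $\Psi_n(y)=\psi(y)^{-n-1}(\psi(y)+y\psi'(y))$ and the expansion $\Psi_n(t/n)=e^{dt}(1+\varpi_1(t)/n+\cdots)$, followed by re-expansion at $t_0=\pi^2/6$ and computer-algebra bookkeeping. This is essentially identical to the paper's treatment in \S\S\ref{S:tep}--\ref{ss:source}; the only very minor imprecision is that the cutoff $|k-qn|\ge n^{5/8}$ is handled by the saddle-point bound and concavity of $\phi$ rather than by the concentration inequality \eqref{E:Ekz-o}, which controls only the $\theta$-tail.
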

See \cite{Hwang2020} for an alternative approach to \eqref{E:an-ae}, 
based on Zagier's approach (which in turn relies on other identities 
and quantum modular forms).

We list the expressions of $d_1$ and $d_2$ in the two standard cases 
of FMs:

\medskip

\begin{center}
\begin{tabular}{c|c|c}
\multicolumn{1}{c}{} & 
\multicolumn{1}{c}{$\Lambda(z)=(1-z)^{-1}$}    
& \multicolumn{1}{c}{$\Lambda(z)=1+z$}  \\ \hline
$d_1$ & $\frac38-\frac{17\pi^2}{144}+\frac{\pi^4}{432}$
      & $\frac38+\frac{19\pi^2}{144}+\frac{\pi^4}{432}$\\
$d_2$ & $-\frac{7}{128}+\frac{17\pi^2}{1152}-\frac{59\pi^4}{41472}
	-\frac{5\pi^6}{62208}-\frac{5\pi^8}{1492992}$
      & $-\frac{7}{128}-\frac{19\pi^2}{1152}-\frac{35\pi^4}{41472}
	     +\frac{7\pi^6}{62208}-\frac{5\pi^8}{1492992}$
\end{tabular}    
\end{center}

\medskip

In particular, the expression $d_1$ is consistent with the expression 
given in \cite[p.\ 955]{Zagier2001}. 

\subsection{Asymptotics of the moments}

With the expansion \eqref{E:an-ae} available, we are now ready to 
derive the asymptotics of the first two moments and prove 
Theorem~\ref{T:mu-var}.

By \eqref{E:M1}, we have, as $z\sim 0$,
\[
    M_1(z) = \frac{M_0(z)-\Lambda(z)}{\Lambda(z)-1}
    = \Lpa{\frac1{\lambda_1z}-\frac{\lambda_2}{\lambda_1^2}
    +O(|z|)}M_0(z)+O(1).
\]
Then 
\[
    \mathbb{E}(X_n) = 
	\frac{[z^n]M_1(z)}{a_n}
	= \frac{a_{n+1}}{\lambda_1 a_n}
	-\frac{\lambda_2}{\lambda_1^2}
	+O\Lpa{\frac{a_{n-1}}{a_n}},
\]
which, together with \eqref{E:an-ae}, gives
\[
    \mathbb{E}(X_n)
	= \frac{a_{n+1}}{\lambda_1a_n}
    -\frac{\lambda_2}{\lambda_1^2} + O\lpa{n^{-1}}
    =\mu n +\frac{9}{\pi^2}
	-\frac{\lambda_2}{\lambda_1^2} + O\lpa{n^{-1}}.
\]
This proves \eqref{E:mu}, the first part of Theorem~\ref{T:mu-var}. 
Note that with the weaker form \eqref{E:an-counts}, the constant term 
cannot be made explicit. Further terms can be readily computed by 
computer algebra software; for example, using the expression of 
$d_1$ in \eqref{E:d1},
\[
	\mathbb{E}(X_n) = \mu \Lpa{n+\frac32} 
	-\frac{\lambda_2}{\lambda_1^2}
	+\frac1{n}\Lpa{\frac{1}{2\pi^2}-\frac{19}{24}
	-\frac{\pi^2}{72\lambda_1^2}
	+\frac{3\lambda_2}{2\lambda_1^2}
	-\frac{2\pi^2\lambda_3}{3\lambda_1^3}
	+\frac{2\pi^2\lambda_2^2}{3\lambda_1^4}}+O\lpa{n^{-2}}.
\]

Similarly, by \eqref{E:M2},
\[
	M_2(z) = \Lpa{\frac1{\lambda_1^2z^2}-
	\frac{\lambda_1^2+4\lambda_2}{2\lambda_1^3z}
	-\frac{\lambda_1^4-2\lambda_1^2\lambda_2+8\lambda_1\lambda_3
	-12\lambda_2^2}{4\lambda_1^4}+O(|z|)}M_0(z),
\]
and accordingly 
\begin{align*}
	\mathbb{V}(X_n)
	&= \frac{[z^n]M_2(z)}{a_n}-\mu_n^2\\
	&= \frac{a_{n+2}}{\lambda_1^2a_n}-
	\frac{(\lambda_1^2+4\lambda_2)a_{n+1}}{2\lambda_1^3a_n}
	-\frac{\lambda_1^4-2\lambda_1^2\lambda_2+8\lambda_1\lambda_3
	-12\lambda_2^2}{4\lambda_1^4}+O\Lpa{\frac{a_{n-1}}{a_n}}-\mu_n^2.
\end{align*}
By the expansion \eqref{E:an-ae} and \eqref{E:mu}, we then obtain 
\eqref{E:var} by straightforward calculations. A longer expansion is 
also easily computed; for example,
\[
    \mathbb{V}(X_n)
    = \sigma^2\Lpa{n+\frac32}-\frac14
	+\frac{\lambda_2}{2\lambda_1^2} 
	+\frac1n\left(\begin{array}{l}
		\frac{\pi^2}{48}-\frac1{4\pi^2}+\frac{19}{48}
		+\frac{\pi^2}{144\lambda_1^2}\\
		-\frac{3\lambda_2}{4\lambda_1^2}
		+\frac{\pi^2+12}{6}\lpa{\frac{\lambda_3}{\lambda_1^3}
		-\frac{\lambda_2^2}{\lambda_1^4}}
	\end{array}
	\right)+O\lpa{n^{-2}}.
\]

\section{Dimension of random FMs}
\label{S:euler} 

This section is devoted to a proof of Theorem~\ref{T:dimdis}, the
central limit theorem for the dimension of random $\Lambda$-FMs of
large size.

\subsection{A better bivariate generating function}

We begin with seeking a series representation of $F(z,v)$ better than
\eqref{E:Fzv0} because \eqref{E:Fzv0} contains negative coefficients
in the expansion of $1-\Lambda(z)^{-j}$. In addition to 
\eqref{E:Fzv0}, it is also known that (see \cite{Fu2020, Jelinek2012})
\begin{equation}\label{E:Fzv}
    \begin{split}
        F(z,v) 
        &=1+\sum_{k\ge1}\frac{v\Lambda(z)^{-k}}
        {1-v\lpa{1-\Lambda(z)^{-k}}}
        \prod_{1\le j\le k}\lpa{1-\Lambda(z)^{-j}},
    \end{split}
\end{equation}
but again the same sign problem occurs. A better expression for our 
purposes is the following one. 

\begin{lemma} The bivariate generating function for the dimension of 
$\Lambda$-FMs satisfies 
\begin{align}\label{E:Fzv4}
    F(z,v) = \sum_{n\ge0}P_n(v)z^n
    =1-v+\sum_{k\ge1}v^{k}\Lambda(z)^{k}
    \prod_{1\le j<k}\frac{\lpa{\Lambda(z)^j-1}^2}
    {1-(v-1)(\Lambda(z)^{j}-1)},
\end{align}
where $P_n(v)$ is the generating polynomial of dimension of
$\Lambda$-FMs of size $n$.
\end{lemma}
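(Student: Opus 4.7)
The plan is to derive \eqref{E:Fzv4} from the starting representation \eqref{E:Fzv0} by applying the Andrews-Jel\'inek $q$-series identity \eqref{E:Andrews}, which is the same tool used to transform the univariate series on the left of \eqref{E:aj-q} into the sign-positive form on the right. First I would rewrite each factor of \eqref{E:Fzv0} as
\[
1-\frac{1}{1+v(\Lambda(z)^j-1)}=\frac{v(\Lambda(z)^j-1)}{(1-v)+v\Lambda(z)^j},
\]
so that $F(z,v)=\sum_{k\ge 0}v^k\prod_{1\le j\le k}\tfrac{\Lambda(z)^j-1}{(1-v)+v\Lambda(z)^j}$. Then, setting $1-t=\Lambda(z)$ in \eqref{E:Andrews}, I would seek $u=u(v)$ and $s=s(v)$ making the LHS of \eqref{E:Andrews} reproduce this reformulation of $F(z,v)$; applying AJ rewrites $F(z,v)$ as a sum whose $k$-th summand carries the prefactor $(1-s)(1-t)^k$ (which should absorb into $v^k\Lambda(z)^k$) and whose product consists of two factors per index $j$ that should collapse into the single squared factor $(\Lambda(z)^j-1)^2/(1-(v-1)(\Lambda(z)^j-1))$ appearing in \eqref{E:Fzv4}. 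As a built-in sanity check, any candidate substitution must specialize at $v=1$ to $u=1$, $1-s=\Lambda(z)$, which recovers \eqref{E:aj-q}.

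The delicate step is pinning down the substitution. The asymmetry in \eqref{E:Andrews}---the LHS involves only $(1-t)^{j-1}$, whereas the RHS involves \emph{both} $(1-t)^{j-1}$ and $(1-t)^j$---means that the two \emph{distinct} linear factors of $\Lambda(z)^j$ on the AJ RHS must be forced to coincide (up to $j$-independent prefactors that telescope out) in order to produce the single squared factor of $\Lambda(z)^j-1$ in \eqref{E:Fzv4}. Imposing this collapse pins down the relation between $u$ and $s$ as functions of $v$; resolving this algebraic constraint, and checking that the remaining prefactors telescope cleanly into $v^k\Lambda(z)^k$ and the constant $1-v$, is the main technical hurdle.

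As a backup verification (and as a consistency check of the AJ manipulation), I would also confirm \eqref{E:Fzv4} as a formal identity in $v$ by expanding both sides in powers of $v$. The coefficient $[v^m]F(z,v)$ equals the size generating function of $\Lambda$-FMs of dimension $m$, while on the right-hand side the geometric-series expansion
\[
\frac{1}{1-(v-1)(\Lambda(z)^j-1)}
=\frac{1}{\Lambda(z)^j}\sum_{n\ge 0}
\Lpa{\frac{v(\Lambda(z)^j-1)}{\Lambda(z)^j}}^n
\]
guarantees that only summands with $k\le m$ contribute to $[v^m]$, reducing the identity to a finite combinatorial matching. The squared factor $(\Lambda(z)^j-1)^2$ then acquires a natural interpretation as arising from the two independent constraints (``row $j$ nonzero'' and ``column $j$ nonzero'') imposed by the Fishburn condition on a middle diagonal entry that is allowed to vanish, and a recursive decomposition of $\Lambda$-FMs along the diagonal then matches the two sides coefficient by coefficient.
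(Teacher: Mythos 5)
Your plan relies on identity \eqref{E:Andrews}, but that identity cannot produce \eqref{E:Fzv4}: the right-hand side of \eqref{E:Andrews} is a sum of \emph{products of polynomial factors} $(1-(1-s)(1-t)^{j-1})(1-u(1-t)^j)$ with no $j$-indexed denominator, whereas the target \eqref{E:Fzv4} contains the rational factor $1/\bigl(1-(v-1)(\Lambda(z)^j-1)\bigr)$ inside the product. No choice of $u=u(v)$ and $s=s(v)$ can manufacture a denominator where the identity has none. Moreover, the ``collapse'' you propose---forcing the two distinct linear factors on the RHS of \eqref{E:Andrews} to coincide so as to produce the square $(\Lambda(z)^j-1)^2$---is achieved only by setting $1-s=\Lambda(z)$ and $u=1$ (with $1-t=\Lambda(z)$), and this kills the $v$-dependence entirely, reproducing the univariate transformation \eqref{E:aj-q}, not the bivariate \eqref{E:Fzv4}. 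So the constraint you would impose to get the square is incompatible with retaining $v$.

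The identity actually needed is the four-parameter Rogers--Fine-type transformation \eqref{E:aj}, whose distinguishing structural feature is the Pochhammer denominator $(s;1-x)_n$ appearing on \emph{both} sides. It is precisely this denominator---with $s$ chosen to be $(v-1)\Lambda(z)/v$ and $t=1$, $x=y=1-\Lambda(z)$---that generates the rational factor $1-(v-1)(\Lambda(z)^j-1)$ in \eqref{E:Fzv4}, because $1-s\Lambda(z)^{j-1}=\bigl(v-(v-1)\Lambda(z)^j\bigr)/v=\bigl(1-(v-1)(\Lambda(z)^j-1)\bigr)/v$. In short, you reached for the wrong tool from the Andrews--Jel\'inek toolbox: \eqref{E:Andrews} is fine for the univariate case \eqref{E:aj-q} but lacks the extra degree of freedom (the denominator parameter) that the bivariate identity \eqref{E:aj} supplies. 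Your fallback ``combinatorial verification'' is not a proof either: you gesture at a recursive decomposition of $\Lambda$-FMs along the diagonal that would match coefficients of $v^m$, but you neither state the decomposition nor explain why it terminates in the claimed formula; as the paper itself notes, a combinatorial explanation of this identity is precisely a question Andrews and Jel\'inek left open and which the analytic lemma addresses, so a combinatorial derivation is not something one can simply appeal to.
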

\begin{proof}
Substitute $t=1$, $s=\frac{v-1}{v(1-z)}$, $x=y=1-\Lambda(z)$ in the 
following identity of Andrews and Jel\'inek in \cite{Andrews2014}: 
\begin{equation}\label{E:aj}
    \begin{split}
        &\sum_{n\ge0}\frac{\lpa{\frac{s}{t(1-x)}; 1-x}_n  
        \lpa{\frac1{1-y};\frac1{1-x}}_n}{(s;1-x)_n} \,t^n\\
        &\qquad =(1-y)\sum_{n\ge0}
        \frac{(1-y;1-x)_n(t(1-x);1-x)_n}{(s;1-x)_n}\,(1-x)^n ,
    \end{split}
\end{equation}
where $(a;z)_n:=(1-a)(1-az)\cdots(1-az^{n-1})$. The left-hand side 
gives \eqref{E:Fzv}, and the right-hand side \eqref{E:Fzv4}.  
\end{proof}

Interestingly, this lemma gives a combinatorial interpretation of a
special case of the generalized Rogers-Fine identity \eqref{E:aj},
partially answering a question raised by Andrews and Jel\'inek
\cite{Andrews2014}.

\subsection{The exponential prototype}

As indicated above, we focus first on the special case when
$\Lambda(z) = e^z$, the general case being then deduced by an
argument based on change of variables.

Let
\begin{align}\label{E:exF1}
	E(z,v) = 1-v+\sum_{k\ge0}E_k(z)R_k(z,v),
\end{align}
where $E_k(z)$ is defined in \eqref{E:Ekz} and 
\begin{align}\label{E:Bkv}
	R_k(z,v) := v^{k+1}\prod_{1\le j\le k}
	\frac1{1-(v-1)\lpa{e^{jz}-1}}
    = v\prod_{1\le j\le k}
	\frac1{1-(1-v^{-1})e^{jz}}.
\end{align}
The first few terms in the Taylor expansion of $E(z,v)$ are 
\[
	E(z,v) = 1+vz + (v+2v^2)\frac{z^2}{2!}
	+(v+12v^2+6v^3)\frac{z^3}{3!}+
	(v+50v^2+132v^3+24v^4)\frac{z^4}{4!}+\cdots.
\]
Here the coefficient of $\frac{z^nv^k}{n!}$ counts \emph{the number 
of labelled \textbf{(2$+$2)} free posets of $n$ elements and with 
magnitude $k-1$}.

While the Taylor expansion of $R_k(z,v)$ (in $z$ and $v$) still
contains, in general, negative coefficients, the series \eqref{E:Bkv}
is suitable for our purposes because $R_k$ plays asymptotically only a
perturbative role when $v$ is close to $1$ in view of the estimate
\begin{align}\label{E:Bk-ub}
	R_k(z,v) =\prod_{1\le j\le k}
	\frac1{1+O(|v-1|)}
	= e^{O(k|v-1|)},
\end{align}
for $1\le k= O(n)$ and small $z\asymp n^{-1}$ when $\lambda_1>0$, 
while, in the same setting, 
\[
	E_k(z) 
	= e^{(k+1)z}\prod_{1\le j\le k}\lpa{e^{jz}-1}^2
    = e^{\Omega(k)};
\]
see below for more precise analysis.

Our aim is to prove the asymptotic normality of the random 
variable $X_n$, which, in the case of \eqref{E:exF1}, is defined as 
\begin{align}\label{E:Xn-k}
	\mathbb{P}(X_n=k)
	:= \frac{n![z^nv^k]E(z,v)}{n![z^n]E(z,1)}
	= \frac{[z^nv^k]E(z,v)}{[z^n]E(z,1)}
	\qquad(1\le k\le n),
\end{align}
for $n\ge1$, and $X_n$ assumes only integer values. For that purpose,
we will restrict our analysis to the range $|v-1|\le\ve$,
$v\in\mathbb{C}$. Then, according to the approach sketched in 
\S~\ref{S:2stage} for the asymptotics of $e_n$, we would expect, when 
$|v-1|\le\ve$, that
\begin{align*}
	[z^n]E(z,v) 
	&= [z^n]\sum_{0\le k\le\tr{\frac12n}} E_k(z)R_k(z,v)
	\approx \sum_{0\le k\le\tr{\frac12n}} R_k(r,v) [z^n]E_k(z)\\
	&\approx R_{\tr{qn}}\lpa{(\mu n)^{-1},v} [z^n]E(z,1),
\end{align*}
where $q=\mu\Log 2$. This is, up to the leading constant, correct 
because
\[
	R_{\tr{qn}}\lpa{(\mu n)^{-1},e^{i\theta/\sqrt{n}}}
	\sim e^{\mu \sqrt{n}\,i\theta - \frac3{2\pi^2}\,\theta^2},
\]
and we see that while the coefficient of $i\theta$ matches that of
the mean, the coefficient of $\theta^2$ is not equal to
$-\frac12\sigma^2$, as desired, showing that a more delicate analysis 
is required.

\begin{proposition} \label{P:Xn-cf}
For large $n$, the coefficient of $z^n$ in the Taylor expansion of 
$E(z,v)$ defined in \eqref{E:exF1} satisfies 
\begin{align}\label{E:Pnv-ue}
	[z^n]E\lpa{z,e^{i\theta/\sqrt{n}}}
	= cn^{\frac12}\mu^n n! \Exp\Lpa{\mu\sqrt{n}\, i\theta
	-\frac{\sigma^2\theta^2}2}
	\lpa{1+O\lpa{(|\theta|+|\theta|^3)n^{-\frac12}}},
\end{align}	
uniformly for $\theta=o(n^{\frac16})$, where 
$c=\frac{12\sqrt{3}}{\pi^{5/2}}$ and $(\mu,\sigma^2)$ is defined 
in \eqref{E:dimdis}.
\end{proposition}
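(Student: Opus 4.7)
The plan is to extend the two-stage saddle-point method of Section~\ref{S:2stage} (used for $a_n = [z^n]E(z,1)$) to the bivariate series \eqref{E:exF1} with $v = e^{i\theta/\sqrt n}$. The key structural advantage of \eqref{E:exF1}--\eqref{E:Bkv} is that the $v$-dependence is entirely in the factor $R_k(z,v)$, which by \eqref{E:Bk-ub} satisfies $|R_k(z,v)| = e^{O(k|v-1|)} = e^{O(|\theta|\sqrt n)}$ uniformly for $z$ on the saddle-point circle and $k = O(n)$. Since $\theta = o(n^{1/6})$, this perturbation is $e^{o(n)}$ and does not alter either the location of the real saddle point $r_k$ of $z^{-n-1}E_k(z)$ or the choice of the dominant range in $k$.

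I would carry out the main steps of \S\ref{S:spm} with the extra factor $R_k$ in tow. First, use the saddle-point bound
\[
    \bigl|[z^n]E_k(z)R_k(z,v)\bigr| \le r_k^{-n}E_k(r_k)\cdot e^{O(|\theta|\sqrt n)},
\]
together with the concavity of $\phi(q,\xi)$ away from its maximum, to reduce the outer sum over $k$ to the central range $|k - qn|\le n^{5/8}$, where $q = \mu\log 2$. Second, on the saddle-point circle $|z| = r_k$, apply the concentration inequality \eqref{E:Ekz-o} combined with the uniform bound on $R_k$ to restrict to the short arc $|t|\le n^{-3/8}$ (writing $z = r_k e^{it}$); on this arc $R_k(r_ke^{it},v) = R_k(r_k,v)\bigl(1+O(|\theta|\sqrt n\cdot|t|)\bigr)$, a negligible modulation.

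Third, expand $\log R_k(r_k, v)$ for $v = e^{i\theta/\sqrt n}$ via
\[
    \log R_k(r_k,v) = (k+1)\log v
    + \sum_{m\ge 1}\frac{(v-1)^m}{m}\sum_{1\le j\le k}(e^{jr_k}-1)^m.
\]
At $k = qn$ and $r_k = (\mu n)^{-1}+O(n^{-2})$ one has $e^{kr_k}\to 2$, so the inner geometric-type sums admit closed-form approximations: the coefficient of $i\theta$ in $\log R_k(r_k,v)$, combined with that of $(k+1)\log v$, telescopes to exactly $\mu\sqrt n\, i\theta$, producing the desired mean; the $m=1$ and $m=2$ contributions together give a direct $\theta^2$-coefficient of $-\mu/4$. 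The remaining part of $\sigma^2 = \mu^2-\mu/2$ emerges in the outer summation over $k$: after the local Gaussian integration in $t$, the sum over $k$ in the central range is approximated by a Gaussian (of width $\asymp\sqrt n$ around $k=qn$), and the linear-in-$(k-qn)$ part of $\log R_k(r_k,v)$ interacts with this Gaussian so that completing the square contributes the extra $-\frac12(\mu^2-\mu)\theta^2$, summing to the stated $-\sigma^2\theta^2/2$.

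The main obstacle will be the bookkeeping in the last two steps: maintaining uniformity in $\theta$ up to $o(n^{1/6})$ requires tracking third-order Taylor errors (of size $|\theta|^3 n^{-1/2}$, which is precisely the source of the $|\theta|^3 n^{-1/2}$ term in the error of \eqref{E:Pnv-ue}); and matching the direct $\theta^2$-coefficient $-\mu/4$ from $\log R_k$ with the Gaussian-shift contribution $-(\mu^2-\mu)/2$ from the outer sum requires a careful second-order expansion of the saddle-point ``Hessian'' at $(q,\xi)=(\mu\log 2,\mu^{-1})$, tracked jointly with the $k$-dependence of both $r_k$ and $R_k(r_k,v)$. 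Once \eqref{E:Pnv-ue} is established, the transition from the exponential prototype to a general $\Lambda$ with $\lambda_1>0$ proceeds by the change-of-variables argument of \S\ref{ss:source}, and Theorem~\ref{T:dimdis} follows by L\'evy's continuity theorem since the relative error $O((|\theta|+|\theta|^3)n^{-1/2})$ vanishes for every fixed $\theta$.
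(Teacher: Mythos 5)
Your overall scaffolding matches the paper's: restrict $k$ to a central range using the concavity of $\phi$, restrict the Cauchy integral to a short arc using \eqref{E:Ekz-o}, expand locally, sum over $k$, and finally pass from $\Lambda(z)=e^z$ to general $\Lambda$ by the change of variables of \S\ref{ss:source}. The genuine divergence — and the gap — is in your choice of saddle point. The paper lets $r$ solve $[s]\Log\bigl(E_k(re^s)R_k(re^s,e^{i\theta/\sqrt n})\bigr)=n$, i.e., the saddle of the full integrand, so the linear-in-$s$ term vanishes by construction; the price is that the coefficients $r_j$ in \eqref{E:r} are polynomials in both $x=(k-qn)/(\varsigma\sqrt n)$ and $\theta$, and all the $\theta^2$-structure is swept into the single expression $e^{-\frac12(x+\frac{3(\pi^2-12\Log 2)i\theta}{\pi^4\varsigma})^2}$ after bootstrapping. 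You instead fix the saddle $r_k$ of $z^{-n}E_k(z)$ alone and treat $R_k$ as a passive factor, which is legitimate in principle, but then the first derivative $B_k:=[s]\Log R_k(r_ke^s,v)$ is a nonzero, purely imaginary quantity of size $\asymp i\theta\mu(2\Log 2-1)\sqrt n$. Over the effective saddle width $|t|\asymp\Xi_2^{-1/2}\asymp n^{-1/2}$, the modulation $B_k t$ is of order $\theta$, which is \emph{not} $o(1)$ in the claimed uniformity range $\theta=o(n^{1/6})$. So ``$R_k(r_ke^{it},v)=R_k(r_k,v)(1+O(|\theta|\sqrt n\,|t|))$, a negligible modulation'' is false; completing the square in $t$ produces an additional factor $\exp\bigl(-B_k^2/(2\Xi_2)\bigr)=\exp\bigl(+\tfrac{\mu^2(2\Log 2-1)^2}{4D}\theta^2\bigr)$ (with $D=\tfrac{\pi^2\varsigma^2}{3}$), numerically $\approx e^{0.16\theta^2}$, which is of the same order as the target $e^{-\sigma^2\theta^2/2}\approx e^{-0.033\theta^2}$.

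This also means your proposed two-piece split of the $\theta^2$-coefficient cannot be right as stated. Your ``direct'' value $-\mu/4$ is indeed the correct $\theta^2$-coefficient of $\Log R_k(r_k,e^{i\theta/\sqrt n})$ at $k=qn$ (the $(k+1)\log v$ term contributes nothing to $\theta^2$ since $\log v=i\theta/\sqrt n$ exactly, and the $m=1,2$ geometric sums give $-\tfrac{\mu(1-\Log 2)}{2}-\tfrac{\mu(\Log 2-\frac12)}{2}=-\tfrac{\mu}{4}$). But then the $k$-summation shift cannot equal $-(\mu^2-\mu)/2$: with the nonzero $t$-shift above, the required $k$-shift would be $-\sigma^2/2+\mu/4-\tfrac{\mu^2(2\Log 2-1)^2}{4D}\approx-0.045$, not $+0.119$. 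Your arithmetic $-\tfrac{\mu}{4}-\tfrac{\mu^2-\mu}{2}=-\tfrac{\sigma^2}{2}$ happens to close, but only because the $t$-shift was dropped; the claimed intermediate constants are mutually inconsistent with a correct bookkeeping. To salvage the fixed-$r_k$ route you must either carry the $B_k t$ cross term through the $t$-Gaussian explicitly, or move the saddle to the $\theta$-dependent root as the paper does, in which case the $t$-shift disappears and the entire $\theta^2$-dependence is recovered from the $\theta$-dependent expansion of $r$, $\Xi_2$, and the $k$-Gaussian centered at $x=-\tfrac{3(\pi^2-12\Log 2)i\theta}{\pi^4\varsigma}$.
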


The approximation \eqref{E:Pnv-ue} implies, by \eqref{E:Xn-k} 
($\mu_n := \mathbb{E}(X_n)$ and $\sigma_n^2 := \mathbb{V}(X_n)$),
\begin{align}\label{E:mgf-cv}
	\mathbb{E}\llpa{\Exp\Lpa{\frac{X_n-\mu_n}{\sigma_n}\,i\theta}}
	\to e^{-\frac12\theta^2},
\end{align}
uniformly for $\theta=O(1)$, and then the asymptotic normality of
$X_n$ \eqref{E:dimdis} (when $\Lambda(z)=e^z$) follows from standard
convergence theorem for characteristic functions; see, e.g.,
\cite[p.\ 777]{Flajolet2009}. 

Throughout this section, $v\in\mathbb{C}$ always lies in a small 
neighborhood of unity, $|v-1|\le\ve$, unless otherwise indicated. 

\subsection{The factorial growth order}

The approach we adopt here follows mostly that sketched above in
Section~\ref{S:spm} from \cite{Hwang2020} but is carried out
differently, with a particular attempt to keep it more self-contained.
We begin with the following lemma, showing that a simple inequality is
already sufficient to characterize the factorial growth of the
problem; furthermore, it shows that the sum of $[z^n]E_k(z) R_k(z,v)$
over the ranges $k\le \alpha_-n$ and $k\ge \alpha_+n$ is
asymptotically negligible, where $\alpha_\pm$ are specified below.

\begin{proposition} Let $\alpha_\pm>0$ be the two zeros of the 
equation $\varphi(\alpha)=\Log\mu$, where 
\begin{align}\label{E:varphi}
	\varphi(\alpha) := 2(1-\alpha)\Log\alpha
	-(1-2\alpha)\Log(1-2\alpha)+2-4\alpha.	
\end{align}
Then 
\begin{align}\label{E:sum-a-pm}
	\llpa{\sum_{0\le k\le (\alpha_--\ve)n}
	+\sum_{k\ge(\alpha_++\ve)n}} [z^n]E_k(z)R_k(z,v)
	= O\lpa{n!(\mu-\ve)^n},
\end{align}	
uniformly for $|v-1|=o(1)$. 
\end{proposition}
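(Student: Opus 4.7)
The plan is to combine a saddle-point bound for each Cauchy integral $[z^n]E_k(z)R_k(z,v)$ with the perturbative estimate on $R_k$ provided by \eqref{E:Bk-ub}. Writing the tail contribution as $\sum_k [z^n]E_k(z)R_k(z,v)$, the goal reduces to producing, for each $k$ outside $[(\alpha_--\ve)n,(\alpha_++\ve)n]$, a bound of the form $n!(\mu-\ve')^n$ that is summable over the at most $n$ values of $k$ involved.

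First, I would invoke Cauchy's formula on the circle $|z|=r_k$, where $r_k>0$ is the saddle-point chosen as in Step~(i) of \S~\ref{S:spm}, so that $r_k\asymp (n-2k)(k+1)^{-2}$. The triangle inequality together with \eqref{E:Bk-ub} yields
\[
    \bigl|[z^n]E_k(z)R_k(z,v)\bigr|
    \le r_k^{-n}E_k(r_k)\sup_{|z|=r_k}|R_k(z,v)|
    = r_k^{-n}E_k(r_k)\cdot e^{O(k|v-1|)}
    = r_k^{-n}E_k(r_k)\cdot e^{o(n)},
\]
uniformly for $|v-1|=o(1)$ and $1\le k=O(n)$. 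Hence the factor $R_k$ inflates the estimate only subexponentially, and the problem reduces to the exponential growth rate of $r_k^{-n}E_k(r_k)/n!$.

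Second, I would identify this exponential rate explicitly. Using Stirling's formula together with a Riemann-sum approximation of
\[
    \Log E_k(r_k)=(k+1)r_k+2\sum_{1\le j\le k}\Log\lpa{e^{jr_k}-1}
\]
with parameter $s=j/k$, and then substituting the asymptotics of $r_k$ in terms of $\alpha=k/n$, one should recover
\[
    \frac1n\Log\Lpa{\frac{r_k^{-n}E_k(r_k)}{n!}}
    =\varphi(\alpha)+o(1),
\]
uniformly for $\alpha$ in any compact subinterval of $(0,\tfrac12)$, with $\varphi$ as in \eqref{E:varphi}. The concavity of $\varphi$ on $(0,\tfrac12)$ together with $\max\varphi>\Log\mu$ (attained at $\alpha=q_*=\mu\Log 2$ from Step~(ii) of \S~\ref{S:spm}) then forces the level set $\{\varphi=\Log\mu\}$ to consist of exactly two points $\alpha_-<q_*<\alpha_+$, and there exists $c(\ve)>0$ such that $\varphi(\alpha)\le\Log\mu-c(\ve)$ whenever $\alpha\le\alpha_--\ve$ or $\alpha\ge\alpha_++\ve$. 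Combining these ingredients and summing over the at most $n$ indices $k$ in the two tail ranges produces \eqref{E:sum-a-pm}.

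The principal technical obstacle is the uniformity of the Riemann-sum approximation across the whole interval $(0,\tfrac12)$: near $\alpha=0$ the saddle-point radius $r_k$ blows up, while near $\alpha=\tfrac12$ the integrals that generate the $(1-2\alpha)\Log(1-2\alpha)$ contribution in \eqref{E:varphi} become singular. A workable strategy is to split each tail range into an ``interior'' part on which the uniform saddle-point estimate applies cleanly, and ``boundary'' parts handled by direct bounds: for $k=O(1)$ one simply expands $E_k$ as a polynomial, while for $k$ close to $\tr{\tfrac12 n}$ one inspects the asymptotics via the behaviour of $\phi(q,\xi)$ in \eqref{E:enk-spb} as $q\to\tfrac12$. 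The perturbative bound on $R_k$ remains benign throughout because $k|v-1|=o(n)$ uniformly for $|v-1|=o(1)$ and $k\le\tfrac12 n$.
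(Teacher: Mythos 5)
Your overall strategy (Cauchy on a circle $|z|=r$, bound $R_k$ via \eqref{E:Bk-ub}, reduce to the exponential rate of $r^{-n}E_k(r)/n!$, invoke concavity) mirrors the paper's, but the central step as you've stated it is wrong, and it is worth being precise about why.

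You claim that if $r_k$ is the saddle-point radius of Step~(i) of \S~\ref{S:spm} (the solution of $nE_k(r)=rE_k'(r)$, or equivalently $2I(kr)=rn$ up to lower-order corrections), then a Riemann-sum computation gives $\tfrac1n\Log\bigl(r_k^{-n}E_k(r_k)/n!\bigr)=\varphi(\alpha)+o(1)$. That asymptotic equality is false. With the genuine saddle radius the exponential rate is $\phi(\alpha,\xi)+1$ from \eqref{E:enk-spb}, subject to $2I(\alpha\xi)=\xi$; this is \emph{not} the same function as $\varphi$. Indeed $\phi+1$ attains its maximum $\Log\mu$ at $\alpha=q=\mu\Log 2$, whereas $\varphi(q)\approx-0.395$ is strictly larger than $\Log\mu\approx-0.498$. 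So $\varphi$ is a strictly weaker (larger) upper bound than $\phi+1$; it cannot be the asymptotic value of the optimized saddle-point expression.

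Where $\varphi$ actually comes from is a much more elementary observation. Since the Taylor coefficients of $E_k$ are nonnegative, $e_{n,k}\le r^{-n}E_k(r)$ for every $r>0$, and the elementary inequality $e^x-1\le xe^x$ ($x\ge0$) gives the \emph{explicit} upper bound
\[
    E_k(r)=e^{(k+1)r}\prod_{1\le j\le k}\lpa{e^{jr}-1}^2
    \le e^{(k+1)r}\prod_{1\le j\le k}\lpa{jre^{jr}}^2
    = e^{(k+1)^2 r}\,k!^2\,r^{2k}.
\]
Minimizing the right-hand side of $r^{-n}e^{(k+1)^2 r}k!^2 r^{2k}$ over $r$ has a closed-form solution $r_0=(n-2k)(k+1)^{-2}$ (not the actual saddle $r_k$, although asymptotically of the same order), and applying Stirling's formula with $k=\alpha n$ directly produces the factor $n^n e^{(\varphi(\alpha)-1)n}$, with $\varphi$ exactly as in \eqref{E:varphi}. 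There is no Riemann-sum limit to justify and no uniformity issue near $\alpha=0$; the inequality is valid term-by-term for all $r>0$ and all $k$. The only delicacy is the boundary $\alpha\to\tfrac12$, where $r_0\to0$, and there one simply plugs in $r=(1+\ell)n^{-2}$ with $\ell=n-2k$ to check the same conclusion.

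Your Riemann-sum route, even if its uniformity obstacles were resolved, would produce the tighter rate $\phi(\alpha,\xi)+1$; since $\phi+1\le\varphi$ pointwise, the proposition's conclusion (with $\alpha_\pm$ defined by $\varphi=\Log\mu$) would still follow a fortiori, so the plan is not irreparably broken. But the literal asymptotic equivalence you assert is incorrect, and you pay for carrying the harder object $\phi+1$ with exactly the boundary-uniformity problems you flag at the end. The paper sidesteps all of that by never touching the true saddle in this lemma: it uses a one-line pointwise majorization of $E_k$ and a closed-form minimizer, and that is where $\varphi$ comes from.
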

\begin{proof}
Since the Taylor expansion of $E_k(z)$ contains only positive 
coefficients, we have, by the elementary inequality $e^x-1\le xe^x$ 
for $x\ge0$,
\begin{align*}
	e_{n,k} &\le r^{-n}E_k(r)
	=r^{-n}e^{(k+1)r}\prod_{1\le j\le k}\lpa{jre^{jr}}^2
    =r^{-n}e^{(k+1)^2r}k!^2r^{2k}\qquad(r>0).
\end{align*}
The optimal choice of $r>0$ at which the right-hand side reaches its
minimum value for fixed $n$ and $k$ is obtained by taking derivative
with respect to $r$, setting it equal to zero and then solving for 
$r$. In this way, we find that the minimum such $r$ is $r = r_0 = 
(n-2k)(k+1)^{-2}$, and we get, with $k=\alpha n$, $0<\alpha<\frac12$,
\begin{align}\label{E:enk-ub1}
	e_{n,k}
	\le r_0^{-n}E_k(r_0)
	=O\lpa{k n^n e^{(\varphi(\alpha)-1)n}},
\end{align}
by Stirling's formula, where $\varphi$ is defined in 
\eqref{E:varphi}. When $\alpha=\frac12$ or $n-2k=\ell=o(n)$, we take 
$r=(1+\ell)n^{-2}$, giving the estimate 
$e_{n,k}=O\lpa{n^{n+2\ell}(2e)^{-n}n^{2\ell}(\ell+1))^{-2\ell}}$. 
Then 

\noindent
\begin{minipage}{0.73\textwidth}
\[
	e^{\varphi(\alpha)} < \mu
	\quad\text{if and only if} \quad
	0\le\alpha<\alpha_- \;\text{and}\; \alpha_+<\alpha\le\frac12,
	\qquad\quad
\]
where $\alpha_\pm>0$ solves the equation $\varphi(\alpha) = \Log\mu$.
Numerically, $\alpha_-\approx 0.30686$ and $\alpha_+\approx 
0.46628$. On the other hand, when $|v-1|=o(1)$, we have, by 
\eqref{E:Bk-ub} and \eqref{E:enk-ub1},
\end{minipage}\;\;
\begin{minipage}{0.25\textwidth}\centering
	\includegraphics[width=0.9\textwidth]{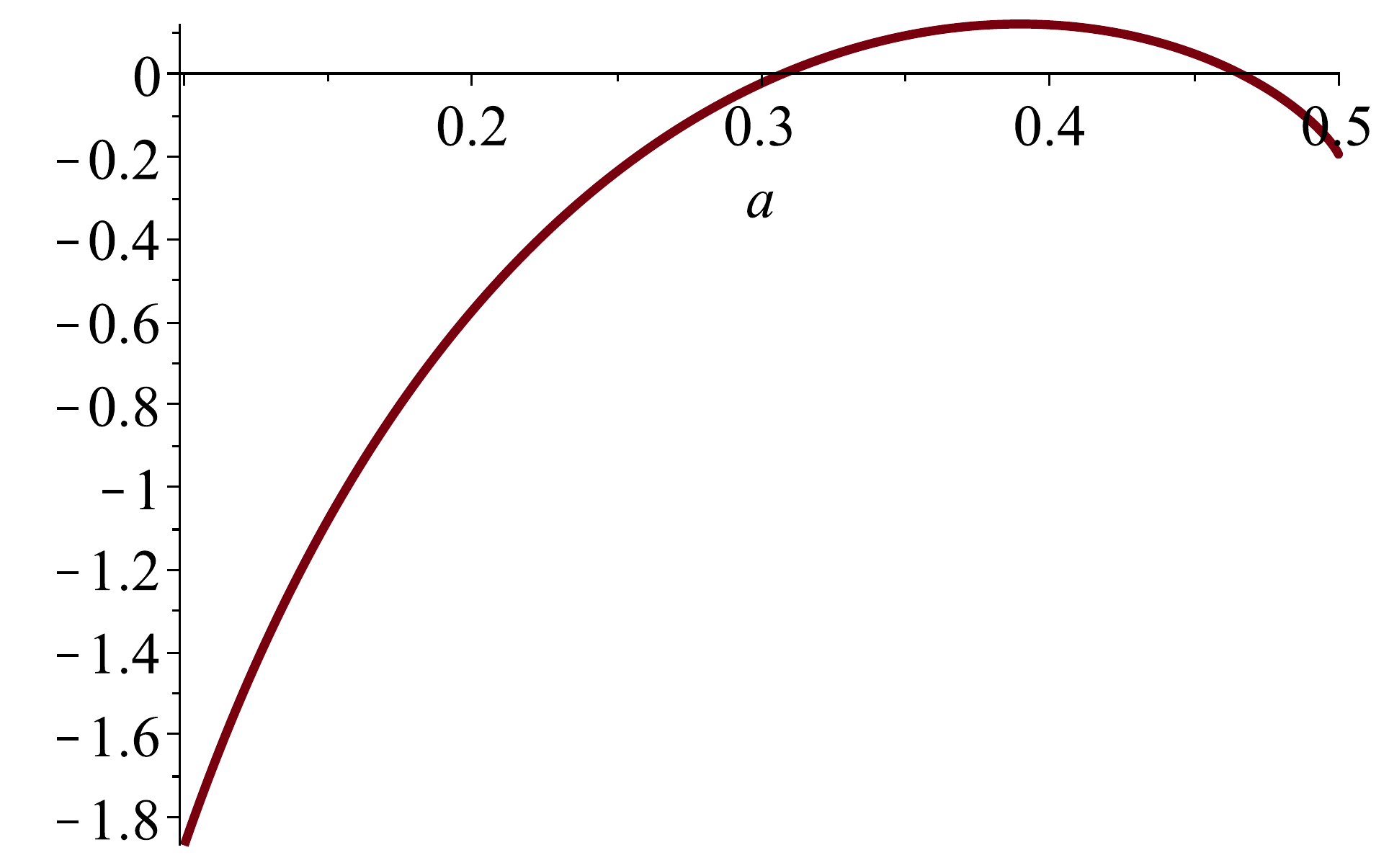}
	
	\centering{\emph{A plot of $\varphi(\alpha)-\Log\mu$.}}
\end{minipage}


\begin{align*}
	[z^n]E_k(z) R_k(z,v)
	= O\lpa{\Max\limits_{|z|=r}|R_k(z,v)|r^{-n}E_k(r)}
	= O\lpa{k n^n e^{(\varphi(\alpha)-1+o(1)) n}}.
\end{align*}
It follows that
\[
	\llpa{\sum_{0\le k\le (\alpha_--\ve)n}
	+\sum_{k\ge(\alpha_++\ve)n}} [z^n]E_k(z) R_k(z,v)
	= O\lpa{n! n^{\frac32}\lpa{e^{\varphi(\alpha_--\ve)n} 
	+e^{\varphi(\alpha_++\ve)n)}}}.
\]
This proves \eqref{E:sum-a-pm} since $\varphi(\alpha_\pm)=\Log\mu$ 
and $\varphi(\alpha)$ is concave on $[0,\frac12]$.
\end{proof}

With the estimate \eqref{E:sum-a-pm} available, we will limit our 
asymptotic study of $[z^n]E_k(z) R_k(z,v)$ to only linear $k$, namely 
$\ve n\le k\le (\frac12-\ve)n$. 

\subsection{The exponential growth order}

In this section, we derive the exponential term in the growth rate of 
$[z^n]E(z,v)$, starting from another simple (finer) approximation of 
$\Log E_k(r)$. We also establish the asymptotic negligibility of  
$[z^n]E_k(z)R_k(z,v)$ for $k$ outside $[k_-,k_+]$, where $k_\pm$ is 
defined below in Proposition ~\ref{P:an2}. 

\begin{lemma} For $0\le k\le \tr{\frac12n}$,
\begin{equation}\label{E:Ekr-2}
\left\{
\begin{split}
	\Log E_k(r) 
	&\le 2k\Log\lpa{e^{kr}-1} -2\Log(e^r-1)
	- \int_0^k \frac{2xr}{1-e^{-xr}}\,\dd x + O(1+kr),\\
	\Log R_k\lpa{z,e^{i\theta}}
	&= \frac{e^{kz}-1}{z}(1-e^{-i\theta})
	+\frac{e^{2kz}-1}{4z}(1-e^{-i\theta})^2
	+O\lpa{|z|^{-1}|\theta|^3+|\theta|+|z|},
\end{split}	\right.
\end{equation}
uniformly for $|z|=r=O(n^{-1})$, and $\theta=o(1)$. 
\end{lemma}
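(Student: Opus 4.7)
The plan is to prove the two estimates in \eqref{E:Ekr-2} separately, the first by an Euler--Maclaurin style comparison with the origin singularity carefully tracked, the second by a direct logarithmic expansion of the product defining $R_k$.

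For the first estimate, start from $\Log E_k(r) = (k+1)r + 2\sum_{1\le j\le k}\Log(e^{jr}-1)$, which follows from \eqref{E:Ekz}. Integration by parts gives
\[
	\int_0^k \Log(e^{xr}-1)\dd x
	= k\Log(e^{kr}-1) - \int_0^k \frac{xr}{1-e^{-xr}}\dd x,
\]
identifying the target leading terms on the right-hand side of the claim. To compare the discrete sum with this integral, write $\Log(e^{xr}-1) = \Log(xr) + h(xr)$ with $h(u):=\Log((e^u-1)/u)$ analytic and $h(0)=0$. Stirling handles $\sum_{1\le j\le k}\Log(jr) = k\Log r + \Log k!$, and the bulk $k\Log k + k\Log r - k$ cancels exactly against $\int_0^k \Log(xr)\dd x = k\Log(kr)-k$. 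On the smooth part, a standard Euler--Maclaurin shows $\sum_{1\le j\le k}h(jr) = \frac1r\int_0^{kr}h(u)\dd u + O(1)$ uniformly for $kr=O(1)$. The net sum-minus-integral discrepancy is $\tfrac12\Log(2\pi k) + O(1)$, and the key observation is that this is absorbed by $-\Log(e^r-1) = -\Log r + O(r)$, since in the regime $r = O(1/n)$ and $k\le n/2$,
\[
	\tfrac12\Log k + \Log r = \tfrac12\Log(r^2 k) = O(1).
\]
Adding the harmless $(k+1)r = O(1+kr)$ and doubling then delivers the stated inequality.

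For the second estimate, take the logarithm of the second representation of $R_k(z,v)$ in \eqref{E:Bkv} and expand each factor as a power series:
\[
	\Log R_k(z,e^{i\theta})
	= i\theta + \sum_{m\ge 1}\frac{(1-e^{-i\theta})^m}{m} \cdot \frac{e^{mkz}-1}{1-e^{-mz}},
\]
the interchange of summations being justified by $|(1-e^{-i\theta})e^{jz}| = O(|\theta|) \to 0$. Inserting the Bernoulli expansion $(1-e^{-mz})^{-1} = (mz)^{-1} + \tfrac12 + O(|z|)$, the $m=1$ term yields $(e^{kz}-1)(1-e^{-i\theta})/z$ plus an $O(|\theta|+|\theta||z|)$ correction, and the $m=2$ term yields $(e^{2kz}-1)(1-e^{-i\theta})^2/(4z) + O(|\theta|^2)$. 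For $m\ge 3$ the tail is bounded geometrically: $|1-e^{-i\theta}|^m = O(|\theta|^m)$ while the geometric-sum factor is $O(1/|z|)$ uniformly (in the relevant $m$ range $mkr$ stays bounded), so summation in $m$ contributes $O(|z|^{-1}|\theta|^3)$. The initial $i\theta$ and the residual $O(|\theta|)$ pieces are already included in the stated error.

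The main obstacle is the first estimate, specifically checking that the $\tfrac12\Log k$ term coming from Stirling is genuinely absorbed into $-2\Log(e^r-1)$ in the claim: this absorption is tight precisely in the regime $r\asymp 1/n$, $k\le n/2$ that governs the subsequent saddle-point analysis, and any admissible growth beyond this would cause the error to blow up logarithmically and destroy the exponential growth bound on $[z^n]E_k(z)R_k(z,v)$. The second estimate, by contrast, is a routine bookkeeping once the $q$-series is expanded and the Bernoulli correction is isolated.
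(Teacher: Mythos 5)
Your argument arrives at the correct conclusions, but the route you take for the first estimate is genuinely different from (and heavier than) what the paper needs, and there are two small imprecisions worth flagging.

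For the upper bound on $\Log E_k(r)$, the paper uses only the monotonicity of $x\mapsto\Log(e^{xr}-1)$: the sum $\sum_{1\le j\le k}\Log(e^{jr}-1)$ is bounded above by $\int_1^k\Log(e^{xr}-1)\dd x + O(1+kr)$, and integration by parts on $\int_1^k$ produces the boundary term $-\Log(e^r-1)$ directly, at $x=1$. That one-liner is all that is required. Your Euler--Maclaurin/Stirling decomposition works too and in fact establishes a sharper \emph{asymptotic equality}, identifying the sum-minus-integral discrepancy as $\tfrac12\Log(2\pi k) + O(1+kr)$; you then downgrade to the inequality. This is valid, but beware the claim ``$\tfrac12\Log(r^2k)=O(1)$'': in the regime $r=O(n^{-1})$, $k\le\tr{\tfrac12 n}$, one has $r^2k=O(n^{-1})\to 0$, so $\Log(r^2k)\to-\infty$. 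It is only bounded \emph{above}, not in absolute value. That suffices for the stated ``$\le$'' (the replacement of $\tfrac12\Log(2\pi k)$ by the larger $-\Log(e^r-1)$ is legal, with room to spare), but the phrase ``$=O(1)$'' is false as written and should read ``$\le O(1)$.'' In short, your route buys a tighter estimate, at the price of more work than the paper's monotonicity trick.

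For the expansion of $\Log R_k(z,e^{i\theta})$ your derivation matches the paper's (which merely records the closed exponential form and stops) and correctly fills in the Bernoulli corrections for $m=1,2$. The only soft spot is the treatment of the $m\ge 3$ tail: the parenthetical ``in the relevant $m$ range $mkr$ stays bounded'' is not right, since $m$ runs over all integers and $\frac{e^{mkz}-1}{1-e^{-mz}}$ is not uniformly $O(|z|^{-1})$; it grows like $e^{mkr}/(mr)$. What actually saves the tail is that $|1-e^{-i\theta}|^m e^{mkr}=(|\theta|e^{kr})^m$, and since $kr=O(1)$ and $\theta=o(1)$ the base $|\theta|e^{kr}=o(1)$, so $\sum_{m\ge3}(|\theta|e^{kr})^m/(m^2r)=O(|\theta|^3/r)$ by a straight geometric series. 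Your bound is correct; the stated justification is not.
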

\begin{proof}
By the monotonicity of $\Log(e^x-1)$, 	
\begin{align*}
	\Log E_k(r) 
	= (k+1)r+2\sum_{1\le j\le k}\Log\lpa{e^{jr}-1}
	\le 2\int_1^k\Log\lpa{e^{xr}-1}\dd x +O(1+kr),
\end{align*}
which then proves the upper bound of $\Log E_k(r)$ in \eqref{E:Ekr-2} 
by an integration by parts. Note that the integral in \eqref{E:Ekr-2} 
equals $2r^{-1}I(kr)$; see \eqref{E:Iz}. 

The other approximation in \eqref{E:Ekr-2} is obtained by the 
expansion
\begin{align*}
	R_k\lpa{z,e^{i\theta}}
	&= e^{i\theta}
	\prod_{1\le j\le k}\frac1{1-(1-e^{-i\theta}) e^{jz}}
	= \Exp\llpa{i\theta+\sum_{l\ge1}
	\frac{(1-e^{-i\theta})^l}{l}
	\cdot \frac{e^{klz}-1}{1-e^{-lz}}}. \qedhere
\end{align*}
\end{proof}

\begin{proposition} \label{P:an2}
With $q=\mu\Log 2\approx 0.42138$ and $k_\pm := qn \pm 2\,\varsigma 
n^{\frac58}$, where
\[
	\varsigma^2 := \frac{3(24(\Log 2)^2-\pi^2)}{2\pi^4}, 
\] 
we have 
\begin{align}\label{E:an-3}
	\llpa{\sum_{0\le k\le k_-}
	+\sum_{k_+\le k\le 0.5n}}[z^n]E_k(z) R_k(z,e^{i\theta})
	= O\lpa{n! \mu^ne^{-n} n^2 e^{-\ve n\theta^2 - n^{\frac14}}},	
\end{align}
uniformly for $\theta=o(1)$. 	
\end{proposition}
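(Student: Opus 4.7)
The plan is to adapt the saddle-point scheme of Section~\ref{S:spm} with uniformity in $\theta$. For each $k$ in the tail range, the nonnegativity of the Taylor coefficients of $E_k$ gives
\[
    \bigl|[z^n]E_k(z)R_k(z,e^{i\theta})\bigr|
    \le r^{-n}E_k(r)\max_{|z|=r}\bigl|R_k(z,e^{i\theta})\bigr|,
\]
and we choose $r=r(k,n)$ to be the saddle-point value of $z\mapsto z^{-n-1}E_k(z)$ as in Section~\ref{S:spm}, so $r\sim\xi(q)/n$ with $q:=k/n$ and $\xi=\xi(q)$ defined by $2I(q\xi)=\xi$. The sharp saddle-point analysis (steps (i)--(v) of Section~\ref{S:spm}) then bounds $r^{-n}E_k(r)$ by a polynomial-in-$n$ multiple of $n!\,\mu^n e^{-n}e^{n\Phi(q)}$, where $\Phi(q):=\phi(q,\xi(q))-\Log\mu$ vanishes together with its first derivative at $q=q^*:=\mu\Log 2$ (with $\xi(q^*)=\mu^{-1}$) and is strictly concave there.

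To extract the factor $e^{-\ve n\theta^2}$ from $R_k$, take the real part of the second line of \eqref{E:Ekr-2} at $z=r$:
\[
    \Log\bigl|R_k(r,e^{i\theta})\bigr|
    =\frac{e^{kr}-1}{r}(1-\cos\theta)
    +\frac{e^{2kr}-1}{4r}\,\text{Re}(1-e^{-i\theta})^2
    +O\lpa{r^{-1}|\theta|^3+|\theta|+r}
    =-\frac{(e^{kr}-1)^2\theta^2}{4r}+O(\cdots),
\]
where the simplification uses $2(e^{kr}-1)-(e^{2kr}-1)=-(e^{kr}-1)^2$. Since $kr\sim q\xi(q)=\Theta(1)$ and $r\sim c/n$ throughout $q\in[\alpha_-+\ve,\alpha_+-\ve]$, this yields $|R_k(r,e^{i\theta})|\le e^{-c_1n\theta^2+O(1)}$ uniformly for some $c_1>0$. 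To upgrade to the maximum over $|z|=r$, one checks that rotating $z=re^{i\phi}$ only perturbs each factor $|1-(1-e^{-i\theta})e^{jz}|$ by a bounded multiplicative correction in the regime of interest, analogously to \eqref{E:Ekz-o}.

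Combining the two estimates and Taylor-expanding $\Phi(q)\le-\tfrac12|\Phi''(q^*)|(q-q^*)^2$ near $q^*$: for $|k-q^*n|\ge 2\varsigma n^{5/8}$ we obtain $n\Phi(q)\le-2|\Phi''(q^*)|\varsigma^2 n^{1/4}$. The prescribed $\varsigma^2=\frac{3(24(\Log 2)^2-\pi^2)}{2\pi^4}$ is exactly $1/(2|\Phi''(q^*)|)$, verifiable by implicit differentiation of $2I(q\xi)=\xi$, so the exponent is at most $-n^{1/4}$ at the boundary and strictly smaller beyond. Summing the per-$k$ bounds over the $O(n)$ tail values (with the extreme ranges $k\le(\alpha_--\ve)n$ and $k\ge(\alpha_++\ve)n$ handled by the cruder estimate \eqref{E:sum-a-pm}) and absorbing the polynomial factors into the $n^2$ prefactor produces \eqref{E:an-3}.

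The main obstacle is the second step: because $R_k$ has signed (in fact complex) Taylor coefficients, one cannot equate $\max_{|z|=r}|R_k(z,e^{i\theta})|$ with $|R_k(r,e^{i\theta})|$ by a positivity argument, and a uniform lower bound on $|1-(1-e^{-i\theta})e^{jz}|$ preserving the leading $\theta^2$ growth must be established by direct estimation. A secondary technical point is the explicit computation of $\Phi''(q^*)$ through implicit differentiation, pinning down the exact calibration of $\varsigma$.
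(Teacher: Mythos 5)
Your overall strategy (saddle-point bound for $E_k$, quadratic $\theta^2$ decay extracted from $R_k$, calibration of $\varsigma$ via the second derivative) is the right skeleton, and the computation $\Log|R_k(r,e^{i\theta})|=-\frac{(e^{kr}-1)^2\theta^2}{4r}+O(\cdots)$ at the \emph{real} point $z=r$ is correct. But the ``upgrade to the maximum over $|z|=r$'' step is a genuine gap, not a technicality, and the bound you want there is actually false.

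Rotating to $z=re^{it}$ rotates the phase of every $e^{jz}$. Writing $1-e^{-i\theta}\approx i\theta$, one finds
\[
  -\Log\bigl|1-(1-e^{-i\theta})e^{jre^{it}}\bigr|
  \;=\; -\theta\,e^{jr\cos t}\sin(jr\sin t)\;-\;\tfrac12\theta^2 e^{2jr\cos t}\cos\bigl(2jr\sin t\bigr)\;+\;O(\theta^3),
\]
so the linear-in-$\theta$ contribution does \emph{not} vanish off the real axis. Summing over $1\le j\le k$ with $kr=\Theta(1)$, $r\asymp n^{-1}$ gives a cumulative term of order $\theta\sum_j jre^{jr}\asymp n\theta$ when $|t|\asymp 1$ (with a definite sign for the right choice of $\operatorname{sgn}t$). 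Hence
\[
  \max_{|z|=r}\bigl|R_k(z,e^{i\theta})\bigr|\;=\;e^{\Theta(n|\theta|)},
\]
consistent with the crude bound \eqref{E:Bk-ub}, and the perturbation from each factor is \emph{not} bounded multiplicatively --- it accumulates over $j$. For $\theta$ in the middle of the range $o(1)$ (say $\theta=n^{-1/4}$), the factor $e^{\Theta(n|\theta|)}=e^{\Theta(n^{3/4})}$ dwarfs the $e^{-n^{1/4}}$ you saved from the saddle-point bound on $r^{-n}E_k(r)$, so the inequality $\bigl|[z^n]E_kR_k\bigr|\le r^{-n}E_k(r)\max_{|z|=r}|R_k|$ cannot possibly yield $e^{-\ve n\theta^2-n^{1/4}}$.

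The paper's proof avoids this precisely by \emph{not} decoupling the two factors via a max. It keeps the integral
\[
  [z^n]E_k(z)R_k(z,v)
  = O\llpa{r^{-n}E_k(r)\int_{-\pi}^{\pi}\bigl|R_k(re^{it},v)\bigr|
  \Exp\Lpa{-\frac{(k+1)^2rt^2}{\pi^2}}\dd t},
\]
using the concentration inequality \eqref{E:Ekz-o} for $|E_k(re^{it})|$. The Gaussian weight restricts $t$ to a window of width $\asymp n^{-1/2}$, and within that window the local expansion \eqref{E:Bk-le} shows $\Log|R_k(re^{it},e^{i\theta})|$ is $-\Theta(n)\theta^2-\Theta(n)\theta t+O(\cdots)$. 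Completing the square in $t$ against the Gaussian shifts the coefficient of $\theta^2$ but, as the paper computes, leaves a strictly positive net constant $c'\approx 0.2835$; this is exactly the delicate balance your max-based bound throws away. You would need to reinstate the integral (or equivalently a saddle-point for the full product $E_kR_k$, as the paper does in the central range) rather than replacing it by a sup over the circle.

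Your remaining steps (the concavity of $\phi$ in $\alpha$, the expansion $\phi=\Log\mu-\tfrac12x^2+O(|x|^3n^{-1/2})$ under $k=qn+\varsigma\sqrt{n}\,x$, the identification $\varsigma^2$ with the reciprocal of $|\partial_\alpha^2\phi|$, and handling the extreme tails via \eqref{E:sum-a-pm}) match the paper and are fine.
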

\begin{proof}
By \eqref{E:Ekr-2}, we take $r$ to be the positive solution of the
equation $2I(kr)=rn$, which exists as long as $0\le k<\tr{\frac12n}$
by the asymptotic behaviors of $I(x)$ for small and large $x$, and
satisfies $r\asymp (n-2k)k^{-2}$; see \cite{Hwang2020}. Then, with
$k=\alpha n$ and $r=\xi n^{-1}$, we have
\[
	r^{-n}E_k(r) 
	= O\lpa{n^2e^{-n\Log r+2k\Log(e^{kr}-1)-n}}
	= O\lpa{n^{n+2} e^{\phi(\alpha,\xi)n}},
\] 
\noindent
\begin{minipage}{0.73\textwidth}
where $\phi(\alpha,\xi)$ is defined in (\ref{E:enk-spb}) subject to
the condition $2I(\alpha\xi)=\xi$. The maximum value of
$\phi(\alpha,\xi)$ is characterized by computing the solution of the
equation $\partial_\alpha \phi(\alpha,\xi)=0$, which is reached at
$(\alpha,\xi)= (\mu\Log 2,\mu^{-1})$. Then we obtain $\phi(\mu\Log
2,\mu^{-1}) = \Log\mu-1$. In addition, it is easy to prove the
concavity of $\phi(\alpha,\xi)$ when viewed as a function of
$\alpha$; see \cite[Lemma 11]{Hwang2020}.
\end{minipage}\;\;
\begin{minipage}{0.25\textwidth}\centering
	\includegraphics[width=0.9\textwidth]{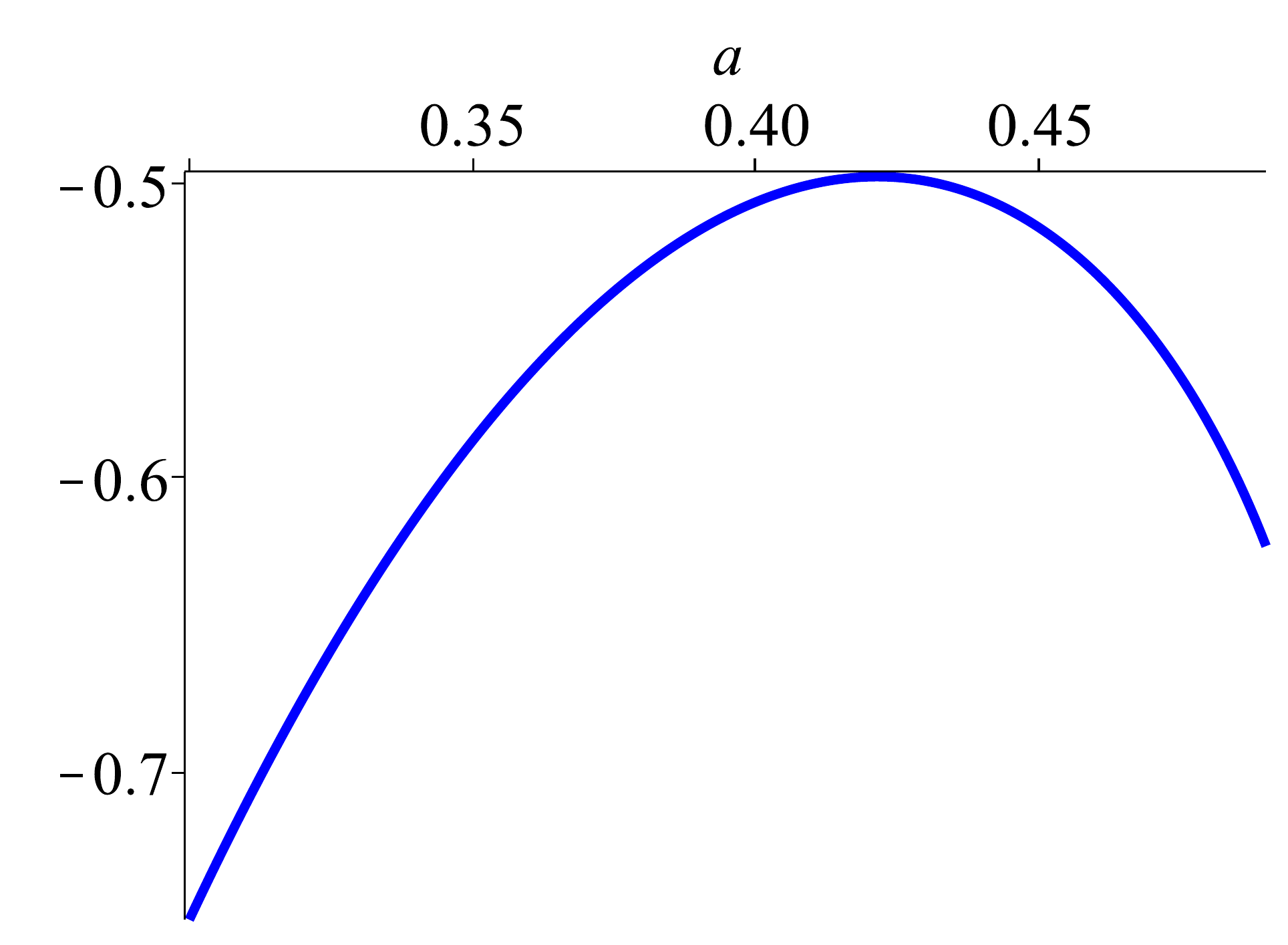}
	
	\centering{\emph{A plot of $\phi(\alpha,\xi)+1$; $\Log\mu\approx 
	-0.4977$.}}
\end{minipage}

\medskip

Now with the estimates in \eqref{E:Ekr-2} and the 
inequality \eqref{E:Ekz-o}, we obtain 
\begin{align}\label{E:snkv-1}
	[z^n]E_k(z) R_k(z,v)
	= O\llpa{r^{-n}E_k(r)
	\int_{-\pi}^{\pi} |R_k(re^{it},v)|
	\Exp\Lpa{-\frac{(k+1)^2rt^2}{\pi^2}}
	\dd t},
\end{align}
where $r>0$ is chosen to be the same as above, namely, $2I(kr)=rn$. 
Since
\begin{align*}
	\Log |R_k(z,e^{i\theta})| 
	= O\lpa{|\theta|r^{-1}}
	= o((k+1)^2 r),
\end{align*}
when $\theta=o(1)$, $k\asymp n$ and $r\asymp n^{-1}$, and the 
integral has the typical form amenable to the Laplace's method, we then 
deduce, by using the local expansion
\begin{align}\label{E:Bk-le}
	\Log\bigl|R_k\lpa{re^{it},e^{i\theta}}\bigr|
	= -\frac{(e^{kr}-1)^2}{4r}\,\theta^2
	-r\partial_r\Lpa{\frac{e^{kr}-1}{r}}\,\theta t
	+O\lpa{r^{-1}(\theta^4+\theta^2t^2+|t||\theta|^3)},
\end{align}
that 
\begin{align}\label{E:int-BE}
	\int_{-\pi}^{\pi} |R_k(re^{it},v)|
	\Exp\Lpa{-\frac{(k+1)^2rt^2}{\pi^2}}\dd t
	&= O\llpa{\frac{e^{-c'(1+o(1))n\theta^2}}{(k+1)\sqrt{r}}},
\end{align}
where
\begin{align*}
	c' := \frac{(e^{kr}-1)^2}{r}\,
	-\frac{\pi^2(e^{2kr}(kr-1)^2+2e^{kr}(kr-1)+1)}
	{4(k+1)^2r^3}.
\end{align*}
Now, for $k\sim qn$ and $r\sim \mu n^{-1}$, we get
\[
	c' \sim \frac{-3(\Log 2)^2+4\Log 2-1}{23(\Log 2)^2}
	\approx 0.2835.
\]
We next improve on the growth order of $r^{-k}E_k(r)$ when $k\sim 
qn$. Write 
\begin{align}\label{E:k-qn}
	k=qn + \varsigma \sqrt{n}x
	\qquad(x=o(\sqrt{n})).
\end{align}
Solving the equation $2I(q\xi)=\xi$ for $\xi$ gives the expansion 
$r = \xi n^{-1}$, where
\[
	\xi = \frac1{\mu}-\frac{\Log2}{\sigma\sqrt{n}}\, x
	+\frac{4\pi^2\sigma^2(3+2\pi^2\sigma^2)-9(1-\Log 2)}
	{48\pi^2\sigma^4 n}\,x^2+O\lpa{|x|^3n^{-\frac32}}.
\]
With these expansions, we then obtain
\[
	\phi(\alpha,\xi)
	= \Log\mu -\frac{x^2}{2}+\frac{\pi^4-72\pi^2\Log 2
	+1152(\Log 2)^3}{16\pi^6\sigma^3\sqrt{n}}\, x^3
	+O\lpa{x^4n^{-1}}.
\]
This implies that as long as $k$ satisfies \eqref{E:k-qn}, we have
\[
	r^{-n}E_k(r) 
	= O\lpa{n^{n+2}\mu^n e^{-n-\frac12 x^2+O(|x|^3n^{-\frac12}})},
\]
uniformly for $x=o(\sqrt{n})$. Combining this with the estimates 
\eqref{E:snkv-1} and \eqref{E:int-BE}, we then have
\[
	|[z^n]E_k(z) R_k(z,e^{i\theta})| 
	= O\lpa{n^{n+\frac32}\mu^n
	e^{-n-\ve n\theta^2-\frac12x^2(1+o(1))}},
\]
uniformly for $x=o(\sqrt{n})$. Thus, by the monotonicity of 
$\alpha\mapsto \phi(\alpha,\xi)$, we obtain \eqref{E:an-3}.
\end{proof}

\subsection{The asymptotic equivalent and the proof of Proposition~\ref{P:Xn-cf}}

We complete the proof of Proposition~\ref{P:Xn-cf} in this section.
The analysis is similar to that conducted in the proof of
Proposition~\ref{P:an2}, and will be brief.

We first derive a more precise approximation to $\Log E_k(z)R_k(z,v)$
than \eqref{E:Ekr-2} for $v$ close to $1$.
\begin{lemma} \label{C:log-Bk}
For $z\in\mathbb{C}$, $z\ne0$, we have 
\begin{equation}\label{E:logaka}
\begin{split}
	\Log E_k(z)R_k\lpa{z,e^{i\theta}}
	&=2k\Log(e^{kz}-1)-\frac{2I(kz)}z
	+\frac{(e^{kz}-1)i\theta}{z}
    +\Log\frac{2\pi(e^{kz}-1)}{z}\\ 
	&\quad +kz
    -\frac{(e^{kz}-1)^2\theta^2}{4z}
    +O\lpa{k^{-1}+|\theta|+n|\theta|^3},
\end{split}
\end{equation}
uniformly for $\theta=o(1)$, $|z|\asymp n^{-1}$ and 
$1\le k\le \tr{\frac12n}$.
\end{lemma}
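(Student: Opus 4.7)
The plan is to analyze $\Log E_k(z)$ and $\Log R_k(z,e^{i\theta})$ separately, since they contribute complementary pieces of the claimed expansion: $\Log E_k(z)$ supplies all the $\theta$-free terms, while $\Log R_k(z,e^{i\theta})$ produces the $i\theta$ and $\theta^2$ contributions together with the $O(n|\theta|^3)$ tail.

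For $\Log E_k(z) = (k+1)z + 2\sum_{j=1}^k\Log(e^{jz}-1)$, the trick is to factor $e^{jz}-1 = jz\cdot\frac{e^{jz}-1}{jz}$, which splits the sum as $k\Log z + \Log k! + \sum_{j=1}^k\Log\frac{e^{jz}-1}{jz}$. Stirling contributes $\Log k! = k\Log k - k + \frac12\Log(2\pi k) + O(k^{-1})$, and since $\Log\frac{e^u-1}{u}$ is regular at $u=0$, Euler--Maclaurin applies to the remaining smooth sum, yielding $\int_0^k \Log\frac{e^{xz}-1}{xz}\dd x + \frac12\Log\frac{e^{kz}-1}{kz} + O(k^{-1})$. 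A substitution $u=xz$ turns the integral into $\frac{J(kz)}{z}$ with $J(z):=\int_0^z\Log\frac{e^u-1}{u}\dd u$, and an integration by parts yields the clean identity $J(z) = z\Log\frac{e^z-1}{z} + z - I(z)$. Assembling everything, the three $\frac12\Log$ contributions combine with the algebraic parts to give exactly
\[
\Log E_k(z) = 2k\Log(e^{kz}-1) - \frac{2I(kz)}{z} + \Log\frac{2\pi(e^{kz}-1)}{z} + kz + O(k^{-1}).
\]

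For $\Log R_k(z,e^{i\theta})$, expanding each factor in \eqref{E:Bkv} via $-\Log(1-w)=\sum_{l\ge 1}w^l/l$ yields
\[
\Log R_k(z,e^{i\theta}) = (k+1)i\theta + \sum_{l\ge 1}\frac{(e^{i\theta}-1)^l}{l}\,S_l, \qquad S_l := \sum_{j=1}^k(e^{jz}-1)^l.
\]
Closed-form geometric sums together with $\frac{1}{e^z-1} = \frac{1}{z}-\frac12+O(z)$ give $(k+1)+S_1 = \frac{e^{kz}-1}{z}+O(1)$ and, after visible cancellations between $\frac{e^{2kz}-1}{2z}$ and $\frac{2(e^{kz}-1)}{z}$ and between the $\pm k$ terms, the crucial identity $S_1+S_2 = \frac{(e^{kz}-1)^2}{2z}+O(1)$. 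For $l\ge 3$, the bounds $|e^{jz}-1|=O(1)$ for $|z|\asymp n^{-1}$ give $|S_l|=O(n)$ uniformly, so the tail $\sum_{l\ge 3}\tfrac{(e^{i\theta}-1)^l}{l}S_l$ is $O(n|\theta|^3)$. Combining with $e^{i\theta}-1 = i\theta+O(\theta^2)$ and $(e^{i\theta}-1)^2 = -\theta^2+O(|\theta|^3)$ produces the stated $i\theta$ coefficient $\frac{e^{kz}-1}{z}$ and $\theta^2$ coefficient $-\frac{(e^{kz}-1)^2}{4z}$, with all leftover terms absorbed into $O(|\theta|+n|\theta|^3)$.

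The main obstacle will be the bookkeeping in $\Log E_k(z)$: one must verify that the Euler--Maclaurin boundary derivatives $f'(0), f'(k)$ and the higher-order Bernoulli corrections are genuinely $O(k^{-1})$ under $|z|\asymp n^{-1}$, and that the three ``half-logs'' coming from Stirling, the trapezoidal boundary term, and $J(kz)/z$ combine cleanly into $\Log\frac{2\pi(e^{kz}-1)}{z}$ without leaking spurious constants. Once the identity $J(z)=z\Log\frac{e^z-1}{z}+z-I(z)$ and the power-sum identity $S_1+S_2=\frac{(e^{kz}-1)^2}{2z}+O(1)$ are secured, the rest is mechanical algebraic rearrangement.
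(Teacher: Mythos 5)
Your proof is correct and fills in, with substantially more detail, exactly what the paper's one-sentence proof (``follows from the Euler--Maclaurin formula'') intends. The key verifications all check out: the integration-by-parts identity $J(w)=w\Log\frac{e^w-1}{w}+w-I(w)$ is correct; the cancellations in $\Log E_k(z)$ (the terms $2k\Log k$, $-2k$, $2k\Log z$ all vanish, and $\Log(2\pi k)-\Log k = \Log 2\pi$) produce precisely $kz+2k\Log(e^{kz}-1)-2I(kz)/z+\Log\frac{2\pi(e^{kz}-1)}{z}+O(k^{-1})$ after noting $(k+1)z = kz + O(|z|) = kz + O(k^{-1})$ for $k\le\tr{n/2}$ and $|z|\asymp n^{-1}$; and the power-sum identity $S_1+S_2 = \frac{(e^{kz}-1)^2}{2z}+O(1)$, which feeds $-\frac{\theta^2}{2}(S_1+S_2) = -\frac{(e^{kz}-1)^2\theta^2}{4z}+O(\theta^2)$, is verified by the same geometric-series computation you outline. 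The Euler--Maclaurin error is indeed $O(k^{-1})$ because $f(x)=\Log\frac{e^{xz}-1}{xz}$ has $|f^{(j)}(x)| = O(|z|^j)$ uniformly on the ray $\{xz:0\le x\le k\}$ (which stays in a fixed small disk around $0$ given $|kz|=O(1)$), so the boundary-derivative and remainder contributions are $O(|z|)=O(k^{-1})$ and $O(k|z|^2)=O(k^{-1})$ respectively. The only cosmetic difference from the paper's own (implicit) route is that you expand $\Log R_k$ in powers of $(e^{i\theta}-1)(e^{jz}-1)$ rather than in powers of $(1-e^{-i\theta})e^{jz}$ as in the proof of the lemma giving \eqref{E:Ekr-2}; your variant is arguably cleaner for the tail bound, since $|e^{jz}-1|$ is uniformly bounded over the range of $j$, giving $|S_l|=O(k)=O(n)$ directly and hence the $O(n|\theta|^3)$ tail.
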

\begin{proof}
The expansion \eqref{E:logaka} follows from the Euler-Maclaurin 
formula. 

\end{proof}

We also need the following estimate for the tail of a Gaussian 
integral. 
\begin{lemma} Assume $K,t_1>0$ and $L\in\mathbb{R}$. Then  
\begin{align}\label{E:int-KL}
	\int_{t_1}^\infty e^{-Lt-Kt^2}\dd t
	= O\lpa{e^{-t_1(Kt_1+L)}(2Kt_1+L)^{-1}},
\end{align}
as long as $(2Kt_1+L)K^{-\frac12}\to\infty$.
\end{lemma}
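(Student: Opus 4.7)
The plan is to reduce the integral to a shifted Gaussian tail and then invoke the standard Mills-ratio estimate. First I would shift the variable of integration by setting $t = t_1 + u$, which converts the quadratic exponent into
\[
	-Lt - Kt^2 = -(Lt_1 + Kt_1^2) - (L + 2Kt_1)u - Ku^2.
\]
Pulling the constant part outside, the integral becomes
\[
	e^{-t_1(Kt_1+L)}\int_0^\infty e^{-(L+2Kt_1)u - Ku^2}\dd u.
\]
At this point one already sees the announced prefactor $e^{-t_1(Kt_1+L)}$; the task is to show that the remaining integral is $O\bigl((2Kt_1+L)^{-1}\bigr)$ under the stated hypothesis.

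A clean way to do this is to complete the square once more and change variable to the standard Gaussian. Setting $s = \sqrt{2K}\,(t + L/(2K))$ converts the original integral into
\[
	\frac{e^{L^2/(4K)}}{\sqrt{2K}}\int_{s_0}^\infty e^{-s^2/2}\dd s,
	\quad s_0 := \frac{2Kt_1 + L}{\sqrt{2K}}.
\]
The hypothesis $(2Kt_1+L)K^{-1/2}\to\infty$ is exactly $s_0\to\infty$, so Mills' ratio gives $\int_{s_0}^\infty e^{-s^2/2}\dd s = (1+o(1))\,e^{-s_0^2/2}/s_0$. Since $s_0^2/2 = Kt_1^2 + Lt_1 + L^2/(4K)$, the $e^{L^2/(4K)}$ factor cancels cleanly, and one obtains
\[
	\int_{t_1}^\infty e^{-Lt-Kt^2}\dd t
	= (1+o(1))\,\frac{e^{-t_1(Kt_1+L)}}{2Kt_1 + L},
\]
which is stronger than the claimed $O$-bound.

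If one prefers to avoid Mills' ratio entirely, the shifted form above can be bounded directly: for $u\ge 0$ we have $e^{-Ku^2}\le 1$ (since $K>0$), so
\[
	\int_0^\infty e^{-(L+2Kt_1)u - Ku^2}\dd u
	\le \int_0^\infty e^{-(L+2Kt_1)u}\dd u
	= \frac{1}{L + 2Kt_1},
\]
which is valid as soon as $L + 2Kt_1 > 0$; this is guaranteed, for large parameters, by the hypothesis $(2Kt_1+L)K^{-1/2}\to\infty$. The only mildly delicate point (not really an obstacle) is that $L$ is allowed to be negative, so one must check that the hypothesis actually forces $2Kt_1 + L > 0$ eventually, which it does. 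This second argument yields the bound in \eqref{E:int-KL} directly without any sharpness constant.
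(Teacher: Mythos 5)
Your proof is correct, and it actually packages two complete arguments. Your Mills-ratio route (completing the square via $s=\sqrt{2K}\,(t+L/(2K))$ and appealing to the Gaussian tail asymptotic $\int_{s_0}^\infty e^{-s^2/2}\dd s\sim e^{-s_0^2/2}/s_0$) is in substance the same as the paper's: the paper substitutes $u=Lt+Kt^2+L^2/(4K)$ to land on the incomplete-gamma tail $\int_{t_2}^\infty e^{-u}u^{-1/2}\dd u\sim t_2^{-1/2}e^{-t_2}$ with $t_2=(2Kt_1+L)^2/(4K)=s_0^2/2$, so the two substitutions are just reparametrizations of one another, and both need $2Kt_1+L>0$ for the change of variables to be monotone on $[t_1,\infty)$.

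Your elementary alternative, however, is a genuinely different and in some ways cleaner route: after shifting $t=t_1+u$ you throw away the Gaussian factor via $e^{-Ku^2}\le 1$ and integrate the pure exponential, which yields the \emph{pointwise} inequality
\[
\int_{t_1}^\infty e^{-Lt-Kt^2}\dd t\le \frac{e^{-t_1(Kt_1+L)}}{2Kt_1+L}
\]
whenever $2Kt_1+L>0$. This gives an implied constant of $1$ and needs no asymptotic machinery at all; the hypothesis $(2Kt_1+L)K^{-1/2}\to\infty$ is invoked only to guarantee $2Kt_1+L>0$ eventually. What the paper's (and your Mills-ratio) argument buys in exchange is the matching lower-order asymptotic, i.e.\ the factor $1+o(1)$ rather than a one-sided $O$-bound — but since the lemma is stated as an $O$-estimate, your elementary argument suffices and is arguably preferable.
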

\begin{proof}
By a direct change of variables $u=Lt+Kt^2+\frac{L^2}{4K}$,
\[
	\int_{t_1}^\infty e^{-Lt-Kt^2}\dd t
	= \frac{e^{L^2/(2K)}}{2\sqrt{K}}
	\int_{t_2}^\infty e^{-u}u^{-\frac12}\dd u,
\]	
where $t_2 := (2Kt_1+L)^2(4K)^{-1}$. Now we have, as $t_2\to\infty$,
\[
	\int_{t_2}^\infty e^{-u}u^{-\frac12}\dd u
	\sim t_2^{-\frac12}e^{-t_2}. \qedhere
\]
\end{proof}
Now we are ready to complete the proof of Proposition \ref{P:Xn-cf}.

In what follows, write $v=e^{i\theta/\sqrt{n}}$. Then
\[
	[z^n]E_k(z)R_k\lpa{z,e^{i\theta/\sqrt{n}}}
	= \frac{r^{-n}}{2\pi}\llpa{\int_{|t|\le t_0}
	+\int_{t_0<|t|\le\pi}} e^{-int}E_k(re^{it}) 
	R_k\lpa{re^{it},e^{i\theta/\sqrt{n}}}\dd t,
\]
where $t_0:=6n^{-\frac38}$. Since $t_0$ is small, the second integral 
is estimated by the same arguments used above for \eqref{E:snkv-1}, 
and we are led to an integral of the form \eqref{E:int-KL} with 
\[
	K=\frac{(k+1)^2r}{\pi^2}, \quad
	L=\frac{r\theta}{\sqrt{n}}\partial_r\Lpa{\frac{e^{kr}-1}{r}},
	\AND t_1 = t_0.
\]
Since  
\[
	\frac{Kt_1+L}{\sqrt{K}}
	\sim \frac{6\sqrt{6}\Log 2}{\pi^2}\, n^{\frac18}
	>n^{\frac18},
\]
when $k\sim qn$ and $r\sim (\mu n)^{-1}$, we have, by 
\eqref{E:snkv-1}, \eqref{E:Bk-le} and \eqref{E:int-KL}, 
\begin{align*}
	&r^{-n}\int_{t_0<|t|\le\pi} e^{-int}E_k(re^{it}) 
	R_k\lpa{re^{it},e^{i\theta/\sqrt{n}}}\dd t\\
	&\qquad= O\llpa{r^{-n}E_k(r) 
	\int_{t_0}^\pi |R_k\lpa{re^{it},e^{i\theta/\sqrt{n}}}|
	\Exp\Lpa{-\frac{(k+1)^2rt^2}{\pi^2}}\dd t}\\
	&\qquad= O\lpa{r^{-n}E_k(r)(k+1)^{-1} r^{-\frac12}
	e^{-\ve \theta^2-n^{\frac14}}},
\end{align*}
which will be seen to be negligible. 

Denote by 
\[
	\Xi_2(r) := 2[s^2]
	\Log \lpa{E_k(re^s) R_k\lpa{re^s,e^{i\theta/\sqrt{n}}}},
\]
where $r>0$ solves the saddle-point equation $[s]\Log 
E_k(re^s)R_k\lpa{re^s,e^{i\theta/\sqrt{n}}} =n$. The analysis we 
carried out so far implies that the saddle-point approximation 
\begin{align}\label{E:spa}
	[z^n]E_k(z) R_k\lpa{z,e^{i\theta/\sqrt{n}}}
	= \frac{r^{-n}E_k(r)R_k\lpa{r,e^{i\theta/\sqrt{n}}}}
	{\sqrt{2\pi \Xi_2(r)}}
	\Lpa{1+O\lpa{\Xi_2(r)^{-1}}},
\end{align}
is well-justified for $k_-\le k\le k_+$, provided that 
$\Xi_2(r)\to\infty$, which will be seen to be the case. Here $k_\pm$ 
is defined in Proposition~\ref{P:an2}.

By the Euler-Maclaurin formula or simply \eqref{E:logaka}, the 
saddle-point equation is given asymptotically by 
\begin{align*}
	&\frac{I(kr)}r-n+
	\frac{e^{kr}(kr-1)+1}{r\sqrt{n}}\,i\theta\\
	&\qquad +\frac1{rn}\llpa{\frac{kr^2n}{e^{kr}-1}+rn(2kr-1)
	-\frac{(e^{kr}-1)(e^{kr}(2kr-1)+1)\theta^2}
	{4}}+O\lpa{n^{-1}}=0.
\end{align*}
The remaining steps are readily coded, and we sketch only the main 
steps. 
 
Write $k=qn+\varsigma \sqrt{n}\,x$. Assuming an expansion of the 
form 
\begin{align}\label{E:r}
    r=\sum_{j\ge0}r_j n^{-\frac12j-1},
\end{align}
where $r_0 = \mu^{-1}$, we then derive, by a standard bootstrapping 
argument, that
\[
	r_1 = -\frac{\Log 2}{\varsigma}\, x
	-\frac{3(2\Log 2-1)}{2\pi^2\varsigma^2}\,i\theta,
\]
and $r_j$ is a polynomial of $x$ and $\theta$ of degree $j$ (the 
expressions are very messy for $j\ge2$). Substituting this 
expansion and $k=qn+\varsigma \sqrt{n}\,x$ into \eqref{E:spa} using 
the asymptotic approximation \eqref{E:logaka}, we then deduce that 
\begin{align*}
	\frac{r^{-n}E_k(r)R_k\lpa{r,e^{i\theta/\sqrt{n}}}}
	{c_0 \mu^n e^{-n}n^{n+1} 
	e^{\mu\sqrt{n}\,i\theta-\frac12\sigma^2\theta^2}}
	= e^{-\frac12\lpa{x+\frac{3(\pi^2-12\Log 2)i\theta}
	{\pi^4\varsigma}}^2}
	\Lpa{1+O\Lpa{\frac{|x|+|\theta|
	+(|x|+|\theta|)^3}{\sqrt{n}}}},
\end{align*}
uniformly for $x,\theta=o(n^{\frac16})$, where $c_0=24\pi^{-1}$.

On the other hand, 
\[
	\Xi_2(r) = \frac{e^{kr}(2k^2r-n)+n}{e^{kr}-1}
	+\frac{k^2re^{kr}}{\sqrt{n}}\,i\theta
	+O(1),
\]
or, by substituting the expansions of $k$ and $r$, 
\[
	\Xi_2(r) = \frac{2\pi^2\varsigma^2}{3}\, n
    +O\lpa{(|x|+|\theta|)\sqrt{n}}.
\]
Collecting these expansions to (\ref{E:spa}) yields
\begin{align*}
	\frac{[z^n]E_k(z)R_k\lpa{z,e^{i\theta/\sqrt{n}}}}
	{c\mu^ne^{-n} n^{n+\frac12}
    e^{\mu\sqrt{n}\,i\theta-\frac12\sigma^2\theta^2}}
	= e^{-\frac12\lpa{x+\frac{3(\pi^2-12\Log 2)i\theta}
	{\pi^4\varsigma}}^2}
	\Lpa{1+O\Lpa{\frac{|x|+|\theta|
	+(|x|+|\theta|)^3}{\sqrt{n}}}},
\end{align*}
where $c=12\sqrt{3}\pi^{-\frac52}$, as in Proposition~\ref{P:Xn-cf}. 

Summing over $k_-\le k\le k_+$ and approximating the sum by a 
Gaussian integral, we then produce an extra factor of the form 
\[
	\sqrt{2\pi} 
	\,\varsigma\sqrt{n}
    \lpa{1+O\lpa{(|\theta|+|\theta|^3)n^{-\frac12}}}.
\]
By Stirling's formula for the factorial, we obtain 
\eqref{E:Pnv-ue}, which completes the proof of 
Proposition~\ref{P:Xn-cf}. 

\subsection{Proof of Theorem~\ref{T:dimdis}}

We now translate the asymptotic approximation \eqref{E:Pnv-ue} for 
$[z^n]E(z,v)$ when $\Lambda(z)=e^z$ into that for $[z^n]F(z,v)$ (see 
\eqref{E:Fzv}) in the more general situations with $\lambda_1>0$. By 
\eqref{E:Fzv4}, 
\begin{align*}
    P_n(v)
	:=[z^n]F(z,v)
	=[z^{n}]\sum_{0\le k\le \tr{\frac12n}}v\Lambda(z)
	\prod_{1\le j\le k}
	\frac{\Lambda(z)(\Lambda(z)^{j}-1)^2}
	{1-(1-v^{-1})\Lambda(z)^{j}}.
\end{align*}
Since $\Lambda(z)=1+\lambda_1z+\cdots$ with $\lambda_1\ne 0$ is 
analytic at $z=0$, the function is locally invertible at $z=0$ and we 
can make the change of variables $\Lambda(z)=e^y$, namely, let 
$z=\beta(y)$ so that $\Lambda(\beta(y))=e^y$ ($\beta(y)$ also 
analytic at $y=0$). Then we have
\[
    P_n(y)
    = [y^n]\Psi_n(y,v)\sum_{0\le k\le \tr{\frac12n}}E_k(y)R_k(y,v),
\]
where $\Psi_n(y) := \beta(y)^{-n-1} y^n \beta'(y)$. Now the expansion 
\[
    \beta(y) = \sum_{j\ge1}\beta_j y^j,\WITH
    \beta_1 := \frac{1}{\lambda_1}, \AND
    \beta_2 := \frac1{\lambda_1}\llpa{\frac12
    -\frac{\lambda_2}{\lambda_1^2}},
\]
implies that 
\begin{align}
    \beta'(y)\beta(y)^{-n-1}
    &=\lambda_1^ny^{-n-1} e^{-\beta_2 ny/\beta_1}
    \lpa{1+O(|y|+n|y|^2)}, \label{E:ztoal}
\end{align}
for small $|y|$. Consequently, when $y\asymp n^{-1}$ is small, we get 
\begin{align}\label{E:an}
    \Psi_n(y)&= \lambda_1^n [y^n]
    e^{-\beta_2 ny/\beta_1}\lpa{1+O(|y|+n|y|^2)}
    \sum_{0\le k\le \tr{\frac12n}}E_k(y)R_k(y,v).
\end{align}
Following the proof of Proposition~\ref{P:Xn-cf}, we know that only a 
small neighborhood of $(k,y) \sim \lpa{qn, (\mu n)^{-1}}$ 
contributes dominantly; furthermore, the extra factor before the sum 
in \eqref{E:an} is bounded when $y\asymp n^{-1}$. Thus we substitute 
$y=r$, with $r$ expanding as in \eqref{E:r}, and obtain 
\begin{align}\label{E:exp1}
    e^{-\beta_2 ny/\beta_1}
    &= e^{-\beta_2/(\beta_1\mu)}
    \lpa{1+O(n^{-\frac12})}.
\end{align}
This, together with \eqref{E:an} and \eqref{E:Pnv-ue} of 
Proposition~\ref{P:Xn-cf}, implies that
\begin{align*}
    P_n\lpa{e^{i\theta/\sqrt{n}}}
    &=\lambda_1^n e^{-\beta_2/(\beta_1\mu)}
    \lpa{1+O\lpa{n^{-\frac12}}}
    [y^n]E\lpa{y,e^{i\theta/\sqrt{n}}}\\
    &\qquad +O\Lpa{\bigl|\lpa{[y^{n-1}]+n[y^{n-2}]}
    E\lpa{y,e^{i\theta/\sqrt{n}}} \bigr|}\\
    &=c \sqrt{n}(\mu\lambda_1)^nn! e^{\mu\sqrt{n}i\theta
    -\frac12\sigma^2\theta^2}
    \lpa{1+O\lpa{(1+|\theta|+|\theta|^3)n^{-\frac12}}},
\end{align*}
where 
\[
    c := \frac{12\sqrt{3}}{\pi^{5/2}}
    \,e^{\frac{\pi^2}6
    \lpa{\frac{\lambda_2}{\lambda_1^2}-\frac12}}.
\]
Now
\[
    \mathbb{E}\Lpa{e^{(X_n-\mu n)i\theta/\sqrt{\sigma^2n}}}
    = \frac{P_n\lpa{e^{i\theta/\sqrt{n}}}}{P_n(1)}
    = e^{\mu\sqrt{n}i\theta
    -\frac12\sigma^2\theta^2}
    \lpa{1+O\lpa{(1+|\theta|+|\theta|^3)n^{-\frac12}}},
\]
implying Theorem~\ref{T:dimdis} by the continuity theorem for 
characteristic functions. 

\section{Proof of the Stoimenow conjecture (Theorem \ref{T:genzag})}\label{S:sto}

We begin by reviewing some necessary definitions on the chord
diagrams for our purposes. A \emph{regular linearized chord diagram
(regular LCD)}, also known as a Stoimenow matching, is a matching of
the set $[2n]=\{1,2,\ldots,2n\}$, namely, it is a partition of $[2n]$
into subsets of size exactly two. Each of the subsets is called an
\emph{arc}. A matching is a \emph{regular LCD} if it has \emph{no}
nested pairs of arcs such that either the openers or the closers are
next to each other.

The number of regular linearized chord diagram of size $n$ (length 
$2n$) equals the $n$-th Fishburn number $f_n$; see for instance 
\cite{Bousquet-Melou2010,Zagier2001} and Figure \ref{F:b3}.
\begin{figure}[ht]
	\centering
	\includegraphics[scale=0.6]{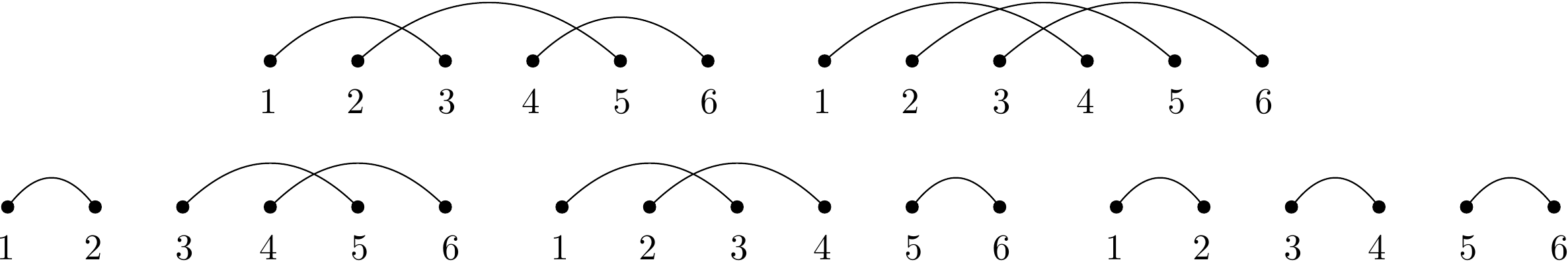}
	\caption{All regular LCDs of size $3$ where the two on top are 
    connected ones.\label{F:b3}}
\end{figure}
By exploiting the relation between the generating function
(\ref{E:genfish}) of the Fishburn numbers and the ``half derivative''
of the Dedekind eta-function, Zagier \cite{Zagier2001} derived the
asymptotic behavior of Fishburn numbers $f_n$, which is our
\eqref{E:an-counts} with $\lambda_1=\lambda_2=1$:
\begin{align}\label{E:zag1}
	f_n := [z^n]\sum_{k\ge0}\prod_{1\le j\le k}
	\lpa{1-(1-z)^{-j}}
	= cn^{\frac12} \mu^n n! 
	\lpa{1+O\lpa{n^{-1}}},
\end{align}
where $(c,\mu) := \lpa{\tfrac{12\sqrt{3}}{\pi^{5/2}}
\, e^{\frac{\pi^2}{12}}, \tfrac{6}{\pi^2}}$.

Given any regular LCD $D$, consider a graph $G_D$ (also known as
\emph{intersection graph}) whose vertices are arcs of $D$ and two
vertices are connected by an edge if the corresponding arcs cross
each other; in this case if $G_D$ is connected, then $D$ is also said
to be \emph{connected}; see Figure~\ref{F:b3} for an illustrative
example. The corresponding generating function $g(z)$ of connected
regular LCDs counted by size is given in \eqref{E:phi}.

We now prove Theorem \ref{T:genzag}, showing that the probability of
a uniformly generated large random regular LCD being connected is
asymptotic to $e^{-1}$.

\begin{proof}
We are going to prove
\begin{align}\label{E:equistom}
	\frac{[z^n]g(z)}{[z^n]\Phi(z,0)}
	= \frac{g_n}{f_n}
	=e^{-1}\lpa{1+O\lpa{n^{-1}}},
\end{align}
where $[z^n]\Phi(z,0)=f_n$ is the $n$-th Fishburn number. In view of
\eqref{E:zag1}, it remains to evaluate $g_n$ as $n$ tends to
infinity.

We first rewrite \eqref{E:phi} by using the two relations 
\eqref{E:fphi} and \eqref{E:Fzv4}, giving 
\begin{align}\label{E:phizv}
	\Phi(z,v)
	=\frac{v}{(1+v)^2}+\frac{1}{(1+v)^2}
	\sum_{k\ge0}\frac{1}{(1-z)^{k+1}}
	\prod_{1\le j\le k}\frac{((1-z)^{-j}-1)^2}
	{1+v(1-z)^{-j}}.
\end{align}
As a result, the equation $\Phi(z,g(z))=1$ can be written as 
\begin{align}\label{E:lambdaA}
	g(z)^2+g(z)+1
	=W(z) := \sum_{k\ge0}\frac{1}{(1-z)^{k+1}}
	\prod_{1\le j\le k}\frac{((1-z)^{-j}-1)^2}
	{1+g(z)(1-z)^{-j}}.
\end{align}
Taking the coefficients of $z^n$ on both sides yields
\begin{align}\label{E:eq1}
	[z^n]g(z)+[z^n]g(z)^2
	=[z^n]W(z),\qquad(n\ge1).
\end{align}
While this equation is still recursive, the dependence of the
first-order asymptotic approximation on $g$ is however weak, and
indeed only on the coefficient $g'(0)=1$ (see \eqref{E:gz-taylor}),
similar to the two examples in \cite[\S 6.1.4]{Hwang2020}. 

Technically, comparing the right-hand side of \eqref{E:lambdaA} with 
the expression (by \eqref{E:genfish} and \eqref{E:aj-q})
\[
    f_n = [z^n]\sum_{k\ge0}\frac{1}{(1-z)^{k+1}}
	\prod_{1\le j\le k}((1-z)^{-j}-1)^2,
\]
we see that the limiting constant $e^{-1}$ will come from the 
extra product
\[
    \bar{W}_k(z) 
    := \prod_{1\le j\le k} \frac1{1+g(z)(1-z)^{-j}}.
\]
For the analysis, we first truncate the series to a polynomial:
\[
    [z^n]\bar{W}_k(z)
    = [z^n]\prod_{1\le j\le k} \frac1{1+\bar{g}_n(z)(1-z)^{-j}},
\] 
where $\bar g_n(z) := \sum_{1\le \ell\le n}g_\ell z^\ell$. Then we 
prove that $\bar g_n(z)=O(|z|)$ when $|z|\le \frac{e}{\mu}-\ve$. By 
the trivial bound $g_n\le f_n$ and the estimate \eqref{E:zag1}, we 
have, when $|z|= \frac{\varrho}{n}$,
\[
    |\bar g_n(z)|\le \bar g_n(|z|) 
    = O\llpa{\sum_{1\le \ell \le n}
    f_\ell |z|^\ell}
    = O\llpa{\sum_{1\le \ell \le n}
    \sqrt{\ell}\,\mu^\ell \ell! \varrho^\ell n^{-\ell}},
\]
which is bounded above by $O(n^{-1})$:
\[
    O\llpa{\sum_{1\le \ell \le n}
    \ell\lpa{\varrho \mu e^{-1}\ell n^{-1}}^\ell}
    = O\lpa{n^{-1}},
\]
whenever $\varrho<\frac{e}{\mu}$. It follows that, with the same 
$|z|=\frac{\varrho}n$,
\[
    \bar W_k(z)
    = O\llpa{\Exp\Lpa{|z|\sum_{1\le j\le k}(1-|z|)^{-j}
    }} = O(1),
\]
whenever $\varrho<\frac{e}{\mu}$. Thus the extra product $\bar 
W_k(z)$ will only affect the constant term in the asymptotic analysis 
of $g_n$.

Indeed, from the proof of Theorem~\ref{T:dimdis} or
Proposition~\ref{P:Xn-cf}, it suffices to examine the behavior of
this finite product when $k=qn+O\lpa{n^{\frac12+\ve}}$ and
$z=(n\mu)^{-1}\lpa{1+O(n^{-\frac12})}$. Note that $q=\mu\Log
2<\frac{e}{\mu}$. Since $\bar g_n(z)=z+O(|z|^2)$ (when $z\asymp 
n^{-1}$), we then get, for such $k$ and $z$,
\begin{align*}
    \prod_{1\le j\le k} \frac1{1+\bar g_n(z)(1-z)^{-j}}
    &= \Exp\llpa{-\bar g_n(z)\sum_{1\le j\le k}(1-z)^{-j} 
    +O\llpa{|z|^2\sum_{1\le j\le k}|1-z|^{-2j}}}\\
    &= \Exp\llpa{-\lpa{z+O(|z|^2)}\frac{(1-z)^{-k}-1}{z}
    +O\lpa{|z|}}\\
    &= e^{-(e^{q/\mu}-1)}\lpa{1+O\lpa{n^{-\frac12}}}
    = e^{-1}\lpa{1+O\lpa{n^{-\frac12}}}.
\end{align*}
The detailed proof follows the same procedure we used for the proof 
of Theorem~\ref{T:dimdis}, and is omitted here. We thus obtain 
\begin{align}\label{E:bk}
    w_n 
    := [z^n]W(z)
    = c' n^{\frac12} \mu^n n! 
    \lpa{1+O\lpa{n^{-1}}},\WITH
    c' := \frac{12\sqrt{3}}{\pi^{5/2}}
    \, e^{\frac{\pi^2}{12}-1}.
\end{align}
This gives an approximation of the right-hand-side of (\ref{E:eq1}). 
It remains to prove that $g_n=[z^n]g(z)$ is asymptotic to $w_n$. 
Note that (\ref{E:eq1}) implies that 
\begin{align}\label{E:gk}
    [z^n]g(z) 
    &= \sum_{1\le \ell\le n}
    \frac{(-1)^{\ell-1}}{\ell}\binom{2\ell}{\ell}
    [z^n](W(z)-1)^\ell\\
    &= w_n - \sum_{1\le j<n}w_jw_{n-j}
    +2\sum_{\substack{j_1+j_2+j_3=n\\
    1\le j_1,j_2,j_3<n}}
    w_{j_1}w_{j_2}w_{j_3}-\cdots.\nonumber
\end{align}
Here the central binomial coefficients in (\ref{E:gk}) only increase 
exponentially, while $w_n$ grows factorially. Now, by \eqref{E:bk},
\begin{align*}
    \sum_{1\le j<n}w_jw_{n-j}
    &=2w_{n-1} +
	O\llpa{\sqrt{n}\,\mu^n
    \sum_{2\le j\le n-2}j!(n-j)!}\\
    &= O\lpa{n^{-1}w_n + \sqrt{n}\,\mu^n (n-1)!}\\
    &= O\lpa{n^{-1}w_n},
\end{align*}
because $j!(n-j)!$ decreases in $j\in[0,\frac12n]$. Similarly, 
\[
    [z^n](W(z)-1)^\ell = O\lpa{n^{-\ell}w_n},
    \qquad(\ell=1,2,\dots),
\]
showing that \eqref{E:gk} is itself an asymptotic expansion. Thus
\begin{align*}
	g_n
    = w_n\lpa{1+O\lpa{n^{-1}}},
\end{align*}
which, together with \eqref{E:bk} and \eqref{E:zag1}, proves the 
limiting ratio  \eqref{E:equistom}, and thus Theorem~\ref{T:genzag}.
\end{proof}


Finer approximations for the ratio $\frac{g_n}{f_n}$ can be derived; 
for example, we have 
\[
	\frac{g_n}{f_n}
	= e^{-1}\llpa{1-\frac{\pi^2}{8n}+O\lpa{n^{-2}}}.
\]

\section{Size distribution of random FMs}
\label{S:je}

We prove in this section Theorem \ref{T:dimdis2}, which is an
extension of the open problem 5.5 by Jel\'inek \cite{Jelinek2012}.
Unlike the analytic proof for Theorem~\ref{T:dimdis}, our approach to 
Theorem~\ref{T:dimdis2} builds on a simple partial fraction 
decomposition. We also briefly discuss a few other sequences of a 
similar nature.

\subsection{Asymptotic normality of the size}

We recall that the generating function $F(z,v)$ of $\Lambda$-FMs is
given by (\ref{E:Fzv4}). Our study of size distribution is restricted
to the situation when $\Lambda(z)$ is a polynomial. In the special
case when $\Lambda(z)=1+z$, the size of an $m$-dimensional primitive
FM lies between $m$ (when only the entries on the main diagonal are
$1$) and $\binom{m+1}2$ (when all entries are $1$). Only near the
median size $\frac14m(m+3)$ does the number of primitive FMs of
dimension $m$ reach its peak among all other possible sizes, which is
also the case when $0$'s and $1$'s are allowed to appear equally
likely in each entry (except the diagonal). For instance, in
Figure~\ref{F:fishburndim}, most primitive FMs of dimension $3$ have
size $4$ or $5$.

\begin{proof}
Let $Y_m$ be the size of a random $m\times m$ $\Lambda$-FM when all 
$\Lambda$-FMs of dimension $m$ are equally likely to be selected. The 
corresponding probability generating function of $Y_m$ is given by 
\begin{align*}
    \mathbb{E}(z^{Y_m})
    =\frac{[v^m]F(z,v)}{[v^m]F(1,v)},
\end{align*}
with $F$ given in \eqref{E:Fzv4}. By partial fraction expansion, 
\[
    \prod_{1\le j\le k}
    \frac{\Lambda(z)^j-1}{1+v(\Lambda(z)^j-1)}
    = \sum_{1\le j\le k}
    \frac{C_{k,j}(z)}{1+v(\Lambda(z)^j-1)},
\]
where
\begin{align*}
    C_{k,j}(z) 
    &:= \lpa{\Lambda(z)^j-1}^k
    \prod_{l\ne j, 1\le l\le k}\frac{\Lambda(z)^l-1}
    {\Lambda(z)^j-\Lambda(z)^l}\\
    &= \lpa{\Lambda(z)^j-1}^k
    \llpa{\prod_{1\le l<j}\frac{\Lambda(z)^l-1}
    {\Lambda(z)^j-\Lambda(z)^l}}
    \llpa{\prod_{j<l\le k}\frac{\Lambda(z)^l-1}
    {\Lambda(z)^j-\Lambda(z)^l}}\\
    &= (-1)^{k-j}\lpa{\Lambda(z)^j-1}^{k} 
    \Lambda(z)^{-\binom{j}2}
    \prod_{1\le l\le k-j}\frac{1-\Lambda(z)^{-j-l}}
	{1-\Lambda(z)^{-l}}.
\end{align*}
Consequently,
\begin{align*}
    [v^m]F(z,v) 
    &= \sum_{1\le k\le m}\sum_{1\le j\le k}
    C_{k,j}(z) (-1)^{m-k}
    \lpa{\Lambda(z)^j-1}^{m-k}\\
    &= \sum_{1\le k\le m}\sum_{1\le j\le k}
    (-1)^{m-j}\lpa{\Lambda(z)^j-1}^m\Lambda(z)^{-\binom{j}2}
    \llpa{\prod_{1\le l\le k-j}\frac{1-\Lambda(z)^{-l-j}}
	{1-\Lambda(z)^{-l}}}.
\end{align*}
Interchanging the two sums and rearranging the sum-indices lead to 
\begin{align*}
	[v^m]F(z,v) 
    &= \Lambda(z)^{\binom{m+1}2}
    \sum_{0\le j< m}
    (-1)^{j}\lpa{1-\Lambda(z)^{-m+j}}^m
	\Lambda(z)^{-\binom{j+1}2}
    \sum_{0\le k\le j}
	\prod_{1\le l\le k}\frac{1-\Lambda(z)^{-l-m+j}}
	{1-\Lambda(z)^{-l}}.
\end{align*} 
For our limit law purposes, we consider $z\sim1$. Since $\Lambda(z)$
is a polynomial with positive coefficients and $\Lambda(1)>1$, there
is a small neighborhood of unity, say $|z-1|\le\delta$, $\delta>0$,
where $|\Lambda(z)|>1$. Thus for such $z$
\[
    \lpa{1-\Lambda(z)^{-m+j}}^m-1
    = O\lpa{m|\Lambda(z)|^{-m+j}}.
\]
We then deduce that 
\begin{align}\label{E:Ym-qpa}
    [v^m]F(z,v)&= H(z) \Lambda(z)^{\binom{m+1}2} 
    \lpa{1+O\lpa{m|\Lambda(z)|^{-m}}},
\end{align}
uniformly for $|z-1|\le\delta$, where
\begin{align*}
    H(z):= \sum_{j\ge0}(-1)^{j}
	\Lambda(z)^{-\binom{j+1}2}
    \sum_{0\le k\le j}
	\prod_{1\le l\le k}\frac1
	{1-\Lambda(z)^{-l}}.
\end{align*}
In particular, the total number of $m$-dimensional $\Lambda$-FMs 
satisfies 
\begin{align}\label{E:m-dim-fm}
    \frac{[v^m]F(1,v)}{\Lambda(1)^{\binom{m+1}2}} 
    \to H(1)  = \sum_{j\ge0}(-1)^{j}\Lambda(1)^{-\binom{j+1}2}
    \sum_{0\le k\le j}\frac1{Q_k},
\end{align}
where
\[
	Q_k := \prod_{1\le \ell \le k}\frac1{1-\Lambda(1)^{-\ell}}.
\]
An alternative expression of $H(1)$ with additional numerical 
advantages is 
\[
	H(1) 
	= \frac1{Q_\infty}
	\sum_{j\ge0}(-1)^{j}\Lambda(1)^{-\binom{j+1}2}
    \sum_{l\ge0}\frac{(-1)^j\Lambda(1)^{-\binom{l}2}}{Q_l}
	\cdot\frac{1-\Lambda(1)^{-l(j+1)}}{1-\Lambda(1)^{-l}},
\]
which follows from the Euler identity
\[
	\frac{Q_\infty}{Q_k}
	= \sum_{l\ge0}\frac{(-1)^l\Lambda(1)^{-\binom{l}2}}
	{Q_l}\,\Lambda(1)^{-kl}. 
\]

Furthermore, it also follows from \eqref{E:Ym-qpa} that
\begin{align}\label{E:EzYm}
    \mathbb{E}(z^{Y_m})
    =\frac{[v^m]F(z,v)}{[v^m]F(1,v)}
    =\frac{H(z)}{H(1)}
    \left(\frac{\Lambda(z)}{\Lambda(1)}\right)^{\binom{m+1}2}
    \lpa{1+O\lpa{m\Lambda(1)^{-m}+m|\Lambda(z)|^{-m}}},
\end{align}
uniformly for $|z-1|\le\delta$. By applying the Quasi-powers Theorem 
(\cite[IX.5]{Flajolet2009} or \cite{Hwang1998}), we conclude that the 
distribution of the random variable $Y_m$ is asymptotically normally 
distributed  with mean and variance asymptotic to 
\begin{align*}
    \mathbb{E}(Y_m)
    &=\hat{\mu}m(m+1)+\frac{H'(1)}{H(1)}+O\lpa{m\Lambda(1)^{-m}},\\
    \mathbb{V}(Y_m)
    &= \hat{\sigma}^2m(m+1)+\frac{H'(1)+H''(1)}{H(1)}
    -\Lpa{\frac{H'(1)}{H}}^2+O\lpa{m\Lambda(1)^{-m}},
\end{align*}
where $(\hat{\mu}, \hat{\sigma}^2)$ are defined in 
\eqref{E:mup-sigmap}.
\end{proof}

As a special case, consider $\Lambda=\{0,1,\dots,h-1\}$, where 
$h\ge2$. Then $\Lambda(1) = h$. The asymptotic expression 
\eqref{E:m-dim-fm} then suggests the following algorithm for 
generating a random $m$-dimensional $\Lambda$-FM: Generate first the 
two corners on the diagonal by two independent integer-valued uniform 
distribution $\text{Uniform}[1,h-1]$, and then the remaining 
entries $\text{Uniform}[0,h-1]$. Reject the matrix if it fails to 
be Fishburn and stop if it is. The probability of success is given 
by, according to \eqref{E:m-dim-fm},
\[
	p_h := \frac{h^2}{(h-1)^2}
	\sum_{j\ge0}(-1)^{j}h^{-\binom{j+1}2}
	\sum_{0\le k\le j}\prod_{1\le l\le k}\frac1
	{1-h^{-l}}.
\]
This probability tends to $1$ as $h$ increases. 
\begin{center}
\begin{tabular}{c|ccccccccc}
	$h$ & $2$ & $3$ & $4$ & $5$ & $6$ 
	& $7$ & $8$ & $9$ & $10$ \\ \hline
	$p_h$ & $0.334$ & $0.706$ & $0.843$ & $0.903$ & $0.935$
	& $0.953$ & $0.965$ & $0.972$ & $0.978$
\end{tabular}	
\end{center}
The more than doubled jump of the success probability from $p_2$ to 
$p_3$ is more significant than expected; these values also show that 
the naive rejection method is generally very efficient. 

Three sequences are found in the OEIS of the form $[v^m]F(1,v)$, and
they are summarized in the following table; their asymptotic
behaviors are described by \eqref{E:m-dim-fm}.

\begin{center}
\begin{tabular}{c|ccc}
OEIS & 
\cite[\href{https://oeis.org/A005321}{A005321}]{oeis2019} & 
\cite[\href{https://oeis.org/A289314}{A289314}]{oeis2019} &
\cite[\href{https://oeis.org/A289315}{A289315}]{oeis2019} \\ \hline
$\Lambda(z)$ & $1+z$ &  $1+z+z^2$ & $1+z+z^2+z^3$ \\
Generating function $F(1,z)$ & 
$\sum\limits_{k\ge0}
\prod\limits_{1\le j\le k}\frac{(2^j-1)z}{1+(2^j-1)z}$ &
$\sum\limits_{k\ge0}
\prod\limits_{1\le j\le k}\frac{(3^j-1)z}{1+(3^j-1)z}$ &
$\sum\limits_{k\ge0}
\prod\limits_{1\le j\le k}\frac{(4^j-1)z}{1+(4^j-1)z}$
\end{tabular}	
\end{center}

\subsection{Andresen and Kjeldsen's 1976 paper}
\label{S:AK-1976}

In a somewhat disguised context of transitively directed graphs,
Andresen and Kjeldsen studied in their pioneering paper
\cite{Andresen1976} three sequences connected to primitive FMs,
denoted by $\xi_{m,k}, \eta_{m,k}$ and $\psi_{m,k}$, respectively.
(We change their notation $f(m,k)$ to $f_{m,k}$ to reduce the
occurrences of parentheses.) We show in this subsection that these
three sequences are all asymptotically normally distributed for large
$m$ with mean and variance asymptotic to $\frac12m$ and $\frac 14m$,
respectively.

In terms of the matrix language, $\xi_{m,k}$ counts the number of
primitive FMs of dimension $m$ with first row sum $k$, and
$\psi_{m,k}$ the number of upper triangular binary matrices (matrices
with entries $0$ or $1$) of dimension $m$ with first row sum $k$ such
that
\begin{itemize}
\item the $j$-th column $(1\le j<m)$ is a zero column if and only if
the $(j+1)$-st row is a zero row;

\item all nonzero columns and rows form a primitive FM.
\end{itemize}
While a combinatorial interpretation of the last sequence 
$\eta_{m,k}$ is still lacking (an open question in 
\cite{Andresen1976}), its generating polynomial satisfies a similar 
type of recurrence as that of $\xi$ and $\psi$:
\[
    \left\{
    \begin{split}
        P_m^{[\xi]}(v) &= vP_{m-1}^{[\xi]}(1+2v) 
        -v P_{m-1}^{[\xi]}(v),\\
        P_m^{[\eta]}(v) &= vP_{m-1}^{[\eta]}(1+2v) 
        -(1+v) P_{m-1}^{[\eta]}(v),\\
        P_m^{[\psi]}(v) &= vP_{m-1}^{[\psi]}(1+2v) 
        +(1-v) P_{m-1}^{[\psi]}(v),
    \end{split}
    \right.
\]
for $n\ge2$, all with the same initial conditions
$P_1^{[\cdot]}(v)=v$. Other types of recurrences are also derived in
\cite{Andresen1976}. These recurrences are readily solved by
iterating the corresponding functional equations satisfied by the
bivariate generating functions, and we obtain
\begin{center}
\renewcommand{\arraystretch}{1.5}    
\begin{tabular}{c|ccc}
    Sequence & $\xi_{m,k}$ & $\eta_{m,k}$ & $\psi_{m,k}$\\ \hline 
	OEIS 
	& \cite[\href{https://oeis.org/A259971}{A259971}]{oeis2019}
	& \cite[\href{https://oeis.org/A259972}{A259972}]{oeis2019}
	& \cite[\href{https://oeis.org/A259970}{A259970}]{oeis2019} \\
    Bivariate GF & 
	$\sum\limits_{k\ge0}\prod\limits_{0\le j\le k}
    \frac{(2^j-1+2^jv)z}{1-z+2^j(1+v)z}$
    & $\sum\limits_{k\ge0}\prod\limits_{0\le j\le k}
    \frac{(2^j-1+2^jv)z}{1+2^j(1+v)z}$
    & $\sum\limits_{k\ge0}\prod\limits_{0\le j\le k}
    \frac{(2^j-1+2^jv)z}{1-2z+2^j(1+v)z}$ \\ \hline
	$\sum_k f_{m,k}$ 
	& \cite[\href{https://oeis.org/A005321}{A005321}]{oeis2019}
    & \cite[\href{https://oeis.org/A005014}{A005014}]{oeis2019}
    & \cite[\href{https://oeis.org/A005016}{A005016}]{oeis2019}
	\\ 
\end{tabular}    
\end{center}
\medskip

In particular, by a direct partial fraction expansion, 
\begin{align*}
    P_m^{[\eta]}(v) &= v\sum_{0\le k<m}
    (-1)^k(1+v)^{m-1-k}\prod_{k<l<m}(2^l-1)
	\qquad(m\ge1),
\end{align*}
and the reason of introducing $\eta_{m,k}$ is because of the relations
\[
    P_m^{[\xi]}(v) 
    = \sum_{0\le j<m}\binom{m-1}{j}P_{m-j}^{[\eta]}(v),
    \quad\text{and}\quad
    P_n^{[\psi]}(v) 
    = \sum_{0\le j<m}\binom{m-1}{j}2^jP_{m-j}^{[\eta]}(v).
\]
From these forms, the limiting normal distribution in all cases can 
be derived by a similar argument used above for the size distribution 
$Y_m$. Now write $Q_m := \prod_{1\le j\le m}(1-2^{-j})$. Then 
\begin{align*}
    P_m^{[\eta]}(v) &= 2^{\binom{m}2}\sum_{0\le k<m}
    \frac{(-1)^k2^{-\binom{k}2}Q_{m-1}}{Q_k}\,(1+v)^{m-1-k}\\
    &= T(v)v(1+v)^{m-1}2^{\binom{m}2}
    \llpa{1+ O\Lpa{2^{\binom{m}2}\sum_{k\ge m}
    2^{-\binom{k}2}|1+v|^{-k}}},
\end{align*}
where
\[
    T(v) := Q_\infty \sum_{k\ge0}
    \frac{(-1)^k2^{-\binom{k}2}}{Q_k}\,(1+v)^{-k},
\]
is a meromorphic function of $v$. This implies that 
\[
    \frac{P_m^{[\eta]}(v)}{P_m^{[\eta]}(1)}
    = \frac{vT(v)}{T(1)}\Lpa{\frac{1+v}{2}}^{m-1}
    \lpa{1+O\lpa{|1+v|^{-m}}},
\]
uniformly for $|v+1|\ge 1+\ve$, and from this we then deduce the 
asymptotic normality $\mathscr{N} \lpa{\frac12m, \frac14m}$ for the 
underlying random variables by Quasi-powers Theorem 
(\cite[IX.5]{Flajolet2009} or \cite{Hwang1998}). Exactly the same 
type of results hold for the other two sequences.

\subsection{Some related OEIS sequences}

A few other sequences in the OEIS are closely connected to the
sequences we discussed in this section. We list them in the following
table. Asymptotic or distributional properties can be dealt with by
the same techniques, and are omitted here.

\begin{center}
\renewcommand{\arraystretch}{1.5}    	
\begin{tabular}{cc|cc}\hline

\cite[\href{https://oeis.org/A005327}{A005327}]{oeis2019}
& $\frac1{1+z}\sum\limits_{k\ge0}
   \prod\limits_{1\le j\le k}\frac{(2^j-1)z}{1+2^jz}$ 
& \cite[\href{https://oeis.org/A002820}{A002820}]{oeis2019}
& $2^{\binom{n}{2}}\times \text{A005327}(n+1)$ \\  

\cite[\href{https://oeis.org/A005016}{A005016}]{oeis2019}
& $\sum\limits_{k\ge0}\prod\limits_{1\le j\le k}
\frac{(2^j-1)z}{1+(2^j-2)z}$  
& \cite[\href{https://oeis.org/A005331}{A005331}]{oeis2019}
& $\frac1{1-z}
\sum\limits_{k\ge0}\prod\limits_{1\le j\le k}
\frac{(2^j-1)z}{1+(2^j-2)z}$ \\ 

\cite[\href{https://oeis.org/A005329}{A005329}]{oeis2019}
& $\sum\limits_{k\ge0}\prod\limits_{1\le j\le k}\frac{2^jz}{1+2^jz}$
& \cite[\href{https://oeis.org/A028362}{A028362}]{oeis2019}
& $\sum\limits_{k\ge0}\prod\limits_{0\le j< k}\frac{2^jz}{1-2^jz}$\\  

\cite[\href{https://oeis.org/A182507}{A182507}]{oeis2019}
& $\sum\limits_{k\ge0}\prod\limits_{1\le j\le k}
\frac{j2^{j-1}z}{1+j2^jz}$  
& \cite[\href{https://oeis.org/A006116}{A006116}]{oeis2019}:
& $\sum\limits_{k\ge0}z^k\prod\limits_{0\le j\le k}
\frac{1}{1-2^jz}$ \\ \hline
\end{tabular}	
\end{center}
\begin{remark}
It is interesting to compare 
\cite[\href{https://oeis.org/A028362}{A028362}]{oeis2019} with the 
two identities
\begin{align*}
	1+z = \sum\limits_{k\ge0}\prod\limits_{1\le j\le k}
	\frac{2^{j-1}z}{1+2^jz}, \quad\text{and}\quad
	\frac1{1-z} = \sum\limits_{k\ge0}\prod\limits_{1\le j\le k}
	\frac{jz}{1+jz}.
\end{align*}
Another sequence with a similar nature is the Gaussian 
polynomials 
\cite[\href{https://oeis.org/A022166}{A022166}]{oeis2019}: 
$$\sum\limits_{k\ge0}\prod\limits_{1\le j\le k}\frac{vz}{1-2^jz}.$$

\end{remark}
\section{Self-dual FMs and FMs without $1$'s}
\label{S:final}

In this section, we briefly describe the limiting behaviors of random
self-dual FMs and random FMs whose smallest nonzero entries are $2$,
respectively. The asymptotics in both cases are similar and involve a
stretched exponential factor of the form $e^{\Theta(\sqrt{n})}$. We
only sketch the proof in the self-dual case, and omit that in the
other.

\subsection{Dimension of self-dual FMs}

The dimension distribution of $\Lambda$-FMs (Theorem~\ref{T:dimdis})
exhibits a limiting invariance property in the sense that the central
limit theorem is independent of the entry-set $\Lambda$ as long as
$\lambda_1>0$. We show here that the same limiting property holds
even when we restrict our random matrices to be self-dual (or
persymmetric). What is less expected here is that the variance in the
random self-dual FMs is asymptotically double that in the ordinary
case (while the mean remains asymptotically the same); see
Table~\ref{T:self-dual} for a numerical illustration in the case of
primitive FMs (with $\Lambda(z) = 1+z$).

\begin{table}[!ht]
\begin{center}
\begin{tabular}{cc}
Self-dual primitive FMs & Primitive FMs \\
\begin{tabular}{c|cccccccc}
$n\backslash k$ & $1$ & $2$ & $3$ & $4$ & $5$ & $6$ & 
$(\mu_n,\sigma_n^2)$ \\ \hline 
$1$ & $1$ &&&&&& $(1,0)$ \\
$2$ & & $1$ &&&&& $(2,0)$\\
$3$ & & $1$ & $1$ &&&& $(\frac52,\frac14)$ \\
\rowcolor{blue!20}$4$ & & & $2$ & $1$ &&& 
$(\frac{10}3,\frac{2}{9})$ \\
$5$ & & & $2$ & $3$ & $1$ && $(\frac{23}{6},\frac{17}{36})$\\
$6$ & & & $1$ & $5$ & $6$ & $1$ & $(\frac{59}{13},\frac{94}{169})$\\
\end{tabular}    
&
\begin{tabular}{c|cccccccc}
$n\backslash k$ & $1$ & $2$ & $3$ & $4$ & $5$ & $6$ & 
$(\mu_n,\sigma_n^2)$\\ \hline 
$1$ & $1$ &&&&&& $(1,0)$\\
$2$ & & $1$ &&&&& $(2,0)$\\
$3$ & & $1$ & $1$ &&&& $(\frac52,\frac14)$ \\
\rowcolor{gray!40}$4$ & & & $4$ & $1$ &&& $(\frac{16}5,\frac{4}{25})$\\
$5$ & & & $4$ & $11$ & $1$ && $(\frac{61}{16},\frac{71}{256})$\\
$6$ & & & $1$ & $33$ & $26$ & $1$ 
& $(\frac{271}{61},\frac{1162}{3721})$\\
\end{tabular} 
\end{tabular}    
\end{center}
\medskip
\caption{The first few values of the dimension statistics in
self-dual primitive FMs and ordinary primitive FMs (where
$n\backslash k=$ size$\backslash$dimension). In particular, among the
$5$ primitive FMs of size $4$, only $3$ are self-dual, resulting in
higher variance, and a similar observation holds for matrices of
larger size.}\label{T:self-dual}
\end{table}

\begin{theorem} \label{T:dimdis4}
Let $Z_n$ denote the dimension of a random self-dual $\Lambda$-FM,
where all size-$n$ self-dual $\Lambda$-FMs are equally likely. If 
$\Lambda(z)$ is analytic at $z=0$ with $\lambda_1>0$, then $Z_n$ is 
asymptotically normally:
\[
    \frac{Z_n-\mu n-\mu'\sqrt{n}}{\sigma\sqrt{2n}}
    \stackrel{d}{\to} \mathscr{N}(0,1),
\]
where $(\mu,\sigma)$ is defined in \eqref{E:dimdis}, 
\[
    \mu' := \frac{\sqrt{6}}{2\pi^3}
    \lpa{12\Log 2-\pi^2\sqrt{\lambda_1}},
\]
and the mean and the variance are asymptotic to 
\begin{equation}\label{E:sd-mu-var}
    \begin{split}
	    \mathbb{E}(Z_n)
	    &= \mu n +\mu' \sqrt{n} +O(1),\\
	    \mathbb{V}(Z_n)
	    &= 2\sigma^2n+\frac{\sqrt{6}}{4\pi^5}
        \lpa{24(18-\pi^2)\Log 2-\pi^2(24-\pi^2)\sqrt{\lambda_1}}
        \sqrt{n}+O(1),
    \end{split}
\end{equation}
respectively.
\end{theorem}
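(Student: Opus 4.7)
The plan is to adapt the two-stage saddle-point framework of Sections~\ref{S:2stage} and~\ref{S:euler} to the self-dual setting. The overall blueprint parallels the proof of Theorem~\ref{T:dimdis}, and the self-duality constraint will account for both the $\sqrt{n}$-correction in the mean and the doubling of the asymptotic variance.

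First, I would seek a positive-coefficient series representation of the bivariate generating function $F^{\mathrm{sd}}(z,v) = \sum_n P_n^{\mathrm{sd}}(v) z^n$ marking size and dimension of self-dual $\Lambda$-FMs. A dimension-$k$ self-dual matrix decomposes into its anti-diagonal (each entry contributing once to the size) and its strictly-above-anti-diagonal region (each entry contributing twice). Combining this decomposition with a suitable specialization of the Andrews--Jel\'\i nek identity \eqref{E:aj} in the spirit of the derivation of \eqref{E:Fzv4}, I expect a representation in which the block factors $\Lambda(z)^j-1$ are effectively replaced by $\Lambda(z)^{2j}-1$-type expressions. This $\Lambda(z^2)$-flavour is the combinatorial source of the factor of $2$ in the asymptotic variance.

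Second, specializing to the exponential prototype $\Lambda(z) = e^z$, I would mimic Propositions~\ref{P:an2} and~\ref{P:Xn-cf}. A saddle-point analysis of the analogue $E^{\mathrm{sd}}(z,v) = \sum_k E_k^{\mathrm{sd}}(z) R_k^{\mathrm{sd}}(z,v)$ will lead to an equation of the form $2I(2\alpha\xi) = \xi$, yielding the same mean ratio $q = \mu \Log 2$ but with a quadratic expansion coefficient halved at the saddle, so that Gaussian inversion produces variance $2\sigma^2 n$. In addition, the anti-diagonal weight introduces an extra factor of the form $\Lambda(z)^{\lceil k/2\rceil}$ that perturbs the saddle point by an amount of order $n^{-1/2}$, which translates into the $\mu'\sqrt{n}$ correction to the mean; pushing the local expansion one order further yields the $O(\sqrt{n})$ term in the variance.

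Third, the transfer from the prototype back to general $\Lambda$ follows the change of variables $\Lambda(z) = e^y$ used in Section~\ref{ss:source}. Because the anti-diagonal carries only half as many free entries as the full diagonal of an ordinary FM, the parameter $\lambda_1$ enters $\mu'$ only through $\sqrt{\lambda_1}$, while the companion term $12\Log 2$ is a direct remnant of the saddle-point shift computed in the prototype. The principal obstacle will be the first step: pinning down a positive-coefficient series representation of $F^{\mathrm{sd}}(z,v)$ may require a variant of \eqref{E:aj} not explicitly recorded in Andrews--Jel\'\i nek \cite{Andrews2014}, and without it the integrand is plagued by the same violent cancellations seen in \eqref{E:genfish}. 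Once such a representation is in hand, the concentration estimates (analogues of \eqref{E:sum-a-pm} and \eqref{E:an-3}), the central Gaussian approximation, and the change-of-variables recovery run in close parallel to the proof of Theorem~\ref{T:dimdis}, with only bookkeeping modifications required to extract the explicit constants in \eqref{E:sd-mu-var}.
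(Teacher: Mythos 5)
Your blueprint matches the paper's at a high level (positive-coefficient series, exponential prototype, saddle-point, change of variables), but you have misjudged the principal obstacle, and that misjudgement is the main gap. The paper does not need to hunt for a new Andrews--Jel\'inek variant: it starts directly from the explicit bivariate generating function $G(z,v)$ for self-dual $\Lambda$-FMs, already derived by Jel\'inek in \cite{Jelinek2012} and quoted as \eqref{E:Gzv-v}, which at $v=1$ collapses to $G(z,1)+1 = \sum_{k\ge1}\Lambda(z)^k\prod_{1\le j<k}\lpa{\Lambda(z^2)^j-1}$. This is already in the ``sum of products with nonnegative Taylor coefficients times a small $v$-perturbation'' form that the saddle-point machinery requires, so the violent cancellations you fear never appear. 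Without knowing \eqref{E:Gzv-v} (or rederiving it from the anti-diagonal/above-anti-diagonal decomposition you sketch, which is indeed how it arises combinatorially), your plan does not get off the ground: this is the crucial input, not a ``bookkeeping modification.''

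Two further points where the paper diverges from your outline. First, the explicit mean and variance constants in \eqref{E:sd-mu-var} are not read off from the saddle-point expansion; the paper differentiates $G(z,v)$ with respect to $v$ at $v=1$, solves algebraically for the moment generating functions $M_1(z)$, $M_2(z)$ in terms of $G(z,1)$ and two auxiliary sums $S_1(z)$, $S_2(z)$, and then inserts the known asymptotic expansion \eqref{E:sd-ae} of $[z^n]G(z,1)$ (itself established in \cite{Hwang2020}). This algebraic route makes the constants transparent, whereas pushing the local saddle-point expansion far enough to capture the $O(\sqrt n)$-term in the variance would be considerably messier. Second, the transfer from the exponential prototype to general $\Lambda$ is via the change of variables $\Lambda(z^2)=e^{y^2}$, not $\Lambda(z)=e^y$; this is what produces the $e^{\beta\sqrt n}\,\rho^{n/2}\,n^{n/2}$ growth (rather than $\rho^n n!$) and, through it, the $\mu'\sqrt n$-term and the $\sqrt{\lambda_1}$-dependence you correctly anticipated. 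Your structural intuition — that the $\Lambda(z^2)$-type products halve the effective size while the dimension is roughly twice the product index, which leaves the mean coefficient $\mu$ intact but doubles the variance coefficient — is sound, but the concrete execution requires \eqref{E:Gzv-v}, \eqref{E:sd-ae}, and the correct substitution, none of which your proposal provides.
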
 
\begin{proof}
Our analysis is based on the generating function $G(z,v)$ for the
dimension (marked by $v$) of self-dual primitive FMs of a given size
(as marked by $z$) derived by Jel\'{i}nek in \cite{Jelinek2012}:
\begin{align} \label{E:Gzv-v}
	G(z,v)+v
	&=\sum_{k\ge1}
    \Lambda(z)^k\Lambda(z^2)^{\binom{k}{2}}v^{2k-1}
	\frac{1+v(\Lambda(z^2)^{k}-1)}{\Lambda(z^2)^{k}-1}
	\prod_{1\le j\le k}\frac{\Lambda(z^2)^{j}-1}
	{1+v^2(\Lambda(z^2)^{j}-1)}.
\end{align}
When $v=1$, we have  
\[
    G(z,1)+1 = \sum_{k\ge1}G_k(z),\WITH
    G_k(z) := \Lambda(z)^k
    \prod_{1\le j<k}\lpa{\Lambda(z^2)^j-1}.
\]
Asymptotic approximation of $[z^n]G(z,1)$ was already derived in 
\cite{Hwang2020}; in particular, when $\lambda_1>0$,
\[
	[z^n]G(z,1) = ce^{\beta\sqrt{n}}(\lambda_1\mu)^{\frac12n}
	n^{\frac12(n+1)}\lpa{1+O\lpa{n^{-\frac12}}},
\]
where 
\[
	(c,\beta,\mu) := 
	\llpa{\tfrac{3\sqrt{2}}{\pi^{3/2}}\,
    2^{\frac{\lambda_2}{\lambda_1}-\frac{\lambda_1}{2}}
    e^{-\frac{\lambda_1}{4}-\frac{\pi^2}{24}
    +\frac{\pi^2\lambda_2}{12\lambda_1^2}
    +\frac{3\lambda_1}{2\pi^2}(\Log 2)^2},
	\frac{\sqrt{6\lambda_1}}{\pi}\,\Log 2,
	\frac{6}{e\pi^2}}.
\]
A finer expansion can be derived, which is of the form 
\begin{align}\label{E:sd-ae}
	[z^n]\sum_{k\ge0}\Lambda(z)^k\prod_{1\le j<k}
	\lpa{\Lambda(z^2)^j-1}
	= ce^{\beta\sqrt{n}}\mu^{\frac12n}
	n^{\frac12n+1}\llpa{1+\sum_{j\ge1}
	\bar{d}_jn^{-\frac12j}},
\end{align}
for some computable coefficients $\bar d_j$. We compute first the
mean. For convenience, write $\Lambda_j := \Lambda(z^j)$. Taking the  
derivative with respect to $v$ and substituting $v=1$ on both sides 
of \eqref{E:Gzv-v} give
\begin{align*}
    M_1(z) &:= \partial_v G(z,v)|_{v=1}
    = \sum_{k\ge1}G_k(z)
    \Lpa{\frac{2}{\Lambda_2-1} - 
    \frac{\Lambda_2^{-k}\lpa{\Lambda_2+1}}
    {\Lambda_2-1}}-1\\
    &= \frac{2(G(z,1)+1)}{\Lambda_2-1}
    -\frac{\Lambda_2+1}{\Lambda_2-1}
	\sum_{k\ge1} \frac{\Lambda_1^k}{\Lambda_2^k}
    \prod_{1\le j< k}\lpa{\Lambda_2^j-1}-1.
\end{align*}
Let now
\begin{align*}
    S_1(z) &:= \sum_{k\ge1}\frac{\Lambda_1^k}{\Lambda_2^k}
    \prod_{1\le j< k}\lpa{\Lambda_2^j-1}
    = \sum_{k\ge0}\frac{\Lambda_1^{k+1}}{\Lambda_2^{k+1}}
    \prod_{1\le j\le k}\lpa{\Lambda_2^j-1}\\
	&= \frac{\Lambda_1}{\Lambda_2}+
	\frac{\Lambda_1}{\Lambda_2}
	\sum_{k\ge1}\frac{\Lambda_1^k(\Lambda_2^k-1)}{\Lambda_2^k}
	\prod_{1\le j< k}\lpa{\Lambda_2^j-1}\\
	&= \frac{\Lambda_1}{\Lambda_2}(G(z,1)+2)
	- \frac{\Lambda_1}{\Lambda_2}\,S_1(z).
\end{align*}
Thus
\[
    S_1(z) = \frac{\Lambda_1}{\Lambda_1+\Lambda_2}(G(z,1)+2),
\]
and, consequently,
\[
    M_1(z)
    = \frac{2\Lambda_2-\Lambda_1(\Lambda_2-1)}
	{(\Lambda_2-1)(\Lambda_1+\Lambda_2)}(G(z,1)+2)
	-\frac{2}{\Lambda_2-1}-1.
\]
By \eqref{E:sd-ae}, we then deduce the asymptotic approximation of 
the mean, as that given in \eqref{E:sd-mu-var}. 

For the variance, we compute first the second moment. By the same 
argument, we have
\begin{align*}
    M_2(z) &:= \partial_v^2 G(z,v)|_{v=1} 
    + \partial_v G(z,v)|_{v=1}  \\ 
    &= \sum_{k\ge1}G_k(z)\llpa{
    \frac{4(\Lambda_2^2+1)\Lambda_2^{-2k}}
    {(\Lambda_2-1)(\Lambda_2^2-1)}
    +\frac{(\Lambda_2^2-2\Lambda_2-7)\Lambda_2^{-k}}
    {(\Lambda_2-1)^2}-\frac{4(\Lambda_2^2-2\Lambda_2-1)}
    {(\Lambda_2-1)(\Lambda_2^2-1)}
    }-1.
\end{align*}
Let
\[
	S_2(z) := \sum_{k\ge1}\frac{\Lambda_1^k}{\Lambda_2^{2k}}
    \prod_{1\le j< k}\lpa{\Lambda_2^j-1}.
\]
Then 
\begin{align*}
	S_2(z)
	&= \frac{\Lambda_1}{\Lambda_2^2}
	+\frac{\Lambda_1^2(\Lambda_2-1)}{\Lambda_2^4}
	+\frac{\Lambda_1^2}{\Lambda_2^3}
	\sum_{k\ge1}\Lambda_1^k
	\llpa{1-\frac{\Lambda_2+1}{\Lambda_2^{k+1}}+
	\frac1{\Lambda_2^{2k+1}}}
	\prod_{1\le j< k}\lpa{\Lambda_2^j-1}\\
	&= \frac{\Lambda_1}{\Lambda_2^2}
	+\frac{\Lambda_1^2(\Lambda_2-1)}{\Lambda_2^4}
	+\frac{\Lambda_1^2}{\Lambda_2^3}
	\llpa{G(z,1)+1-\frac{\Lambda_2+1}{\Lambda_2}\,S_1(z)
	+\frac{S_2(z)}{\Lambda_2}},
\end{align*}
which is the solved to be
\[
	S_2(z) = \frac{\Lambda_1^2}
	{(\Lambda_1+\Lambda_2)(\Lambda_1+\Lambda_2^2)}\,
    (G(z,1)+2)
	+\frac{\Lambda_1}{\Lambda_1+\Lambda_2^2}.
\]
It follows that 
\begin{align*}
	M_2(z) &= \llpa{\frac{\Lambda_1}{\Lambda_1+\Lambda_2}
	+\frac{4\Lambda_2}{(\Lambda_2-1)^2(\Lambda_2^2-1)}
	\Lpa{\frac{3\Lambda_2^2-1}{\Lambda_1+\Lambda_2}
	-\frac{\Lambda_2^2(\Lambda_2^2+1)}
	{\Lambda_1+\Lambda_2^2}}}(G(z,1)+2)\\
	&\qquad +\frac{8\Lambda_2}{\Lambda_2^2-1}
	-\frac{4\Lambda_2^2(\Lambda_2^2+1)}
	{(\Lambda_2-1)(\Lambda_2^2-1)(\Lambda_1+\Lambda_2^2)}-1.
\end{align*}
From this expression and the expansion \eqref{E:sd-ae}, we 
deduce an asymptotic approximation to the second moment 
$\mathbb{E}(Z_n^2)$, and then the asymptotic variance in 
\eqref{E:sd-mu-var}.

The proof for the normal limit law is similar to that of 
Theorem~\ref{T:dimdis}, with the modifications needed to 
incorporate the change at the order $\sqrt{n}$. We list here the 
major steps. Prove first that when $\Lambda(z)=e^z$ and 
$v=e^{\theta/\sqrt{n}}$, 
\[
    [z^n]G(z,v) 
    = c  e^{\beta \sqrt{n}}
    \rho^{\frac12n}n^{\frac12(n+1)}
    e^{(\mu\sqrt{n}+\mu')\theta +\sigma^2\theta^2}
    \lpa{1+O\lpa{n^{-\frac12}}},
\]
where 
\[
    (c,\beta,\rho) = \llpa{\frac{3\sqrt{2}}{\pi^{3/2}}
    \,e^{-\frac14+\frac{3}{2\pi^2}(\Log 2)^2},
    \sqrt{\mu}\,\Log 2,\frac{\mu}{e}}.
\]
Then, by the change of variables $\Lambda(z^2)=e^{y^2}$, and by 
following the same analysis, we deduce that 
\[
    [z^n]G(z,v) 
    = c  e^{\beta \sqrt{n}}
    \rho^{\frac12n}n^{\frac12(n+1)}
    e^{(\mu\sqrt{n}+\mu')\theta +\sigma^2\theta^2}
    \lpa{1+O\lpa{(|\theta|+|\theta|^3)
	n^{-\frac12}}},
\]
uniformly for $|\theta|=o\lpa{n^{-1/6}}$, where 
\[
    (c,\beta,\rho) = \llpa{\tfrac{3\sqrt{2}}{\pi^{3/2}}\,
    2^{\frac{\lambda_2}{\lambda_1}-\frac{\lambda_1}{2}}
    e^{-\frac{\lambda_1}{4}-\frac{\pi^2}{24}
    +\frac{\pi^2\lambda_2}{12\lambda_1^2}
    +\frac{3\lambda_1}{2\pi^2}(\Log 2)^2},
    \sqrt{\mu\lambda_1}\Log 2,\frac{\mu\lambda_1}{e}}.
\]
This proves the asymptotic normality of $Z_n$.
\end{proof}

\subsection{FMs without $1$'s}

What happens if $\lambda_1=0$ and the smallest nonzero entry is 
$2$? In this case, the generating functions remain the same but with 
$\Lambda(z) = 1+\lambda_2z^2+\cdots$. Following the asymptotic 
approximations derived in \cite{Hwang2020}, we can also prove the 
corresponding central limit theorem for the dimension of random 
FMs.

\begin{theorem}\label{T:dimdis3}
Assume that $\Lambda(z)$ is analytic at $z=0$ with $\lambda_1=0$, 
$\lambda_2>0$ and that all such $\Lambda$-FMs of size $n$ are equally 
likely to be selected. Then the dimension $X_n$ of a random matrix is 
asymptotically normally distributed with mean and variance both 
linear in $n$:
\begin{align}\label{E:dimdis3}
	\frac{X_n-\bar\mu n-\bar\mu'\sqrt{n}}{\bar\sigma\sqrt{n}}
	\xrightarrow{d} \mathscr{N}(0,1),	
    \WITH
	(\bar\mu,\bar\mu',\bar\sigma^2)
    :=\llpa{\frac{3}{\pi^2},
    -\frac{\sqrt{3}\lambda_3}{2\pi\lambda_2^{3/2}},
    \frac{3(12-\pi^2)}{2\,\pi^4}},
\end{align}
so that $\bar\mu=\frac12\mu$ and $\bar\sigma^2=\frac12\sigma^2$, 
where $(\mu,\sigma^2)$ is given in \eqref{E:dimdis}, and 
\begin{align*}
	\mathbb{E}(X_n) &= \bar\mu n + \bar\mu'\sqrt{n}+O(1),\\
	\mathbb{V}(X_n) &= \bar{\sigma}^2n
	+\llpa{\frac{\lambda_3(\pi^2-6)}
	{4\pi^2\lambda_2^{3/2}}-\frac{2\lambda_5}{\lambda_2^{5/2}}
	+\frac{4\lambda_3\lambda_4}{\lambda_2^{7/2}}
	-\frac{2\lambda_3^3}{\lambda_2^{9/2}}}\sqrt{\bar\mu n}
	+O(1),
\end{align*}
respectively.
\end{theorem}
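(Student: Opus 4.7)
The plan is to parallel the analysis of Section~\ref{S:euler} for Theorem~\ref{T:dimdis}, with the change of variables adjusted to reflect the fact that $\Lambda(z)-1=\lambda_2 z^2+O(z^3)$ now vanishes to second order at $z=0$. In place of the substitution $\Lambda(z)=e^y$ used there, we shall use $\Lambda(z)=e^{y^2}$: since $\Log\Lambda(z)=\lambda_2 z^2\lpa{1+(\lambda_3/\lambda_2)z+\cdots}$ admits a local analytic square root at $z=0$, this relation implicitly defines $z=\gamma(y)$ analytically near $y=0$, with $\gamma(y)=y/\sqrt{\lambda_2}-\lambda_3 y^2/(2\lambda_2^2)+O(y^3)$. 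Writing $\gamma(y)=y\tilde\gamma(y)/\sqrt{\lambda_2}$ with $\tilde\gamma(0)=1$ and applying Cauchy's formula to \eqref{E:Fzv4} yields
\begin{align*}
    P_n(v) = \lambda_2^{n/2}\,[y^n]\Psi_n(y)\,E(y^2,v),\WITH
    \Psi_n(y) := \frac{\tilde\gamma(y)+y\tilde\gamma'(y)}{\tilde\gamma(y)^{n+1}},
\end{align*}
where $E(\cdot,v)$ is the exponential prototype from \eqref{E:exF1}. The square in the argument $y^2$ is what halves the effective size (explaining $\bar\mu=\mu/2$ and $\bar\sigma^2=\sigma^2/2$), while the factor $\tilde\gamma(y)^{-(n+1)}$ injects the stretched-exponential growth $e^{\Theta(\sqrt n)}$ characteristic of this regime, as in the count asymptotics derived in \cite{Hwang2020}.

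Next I would run the two-stage saddle-point machinery of Section~\ref{S:euler} on $\Psi_n(y)E(y^2,v)$ in place of $E(y,v)$. Because $E(y^2,v)=\sum_k E_k(y^2)R_k(y^2,v)$ has the same structure as the exponential prototype, just with $y^2$ replacing $y$, the concentration inequality \eqref{E:Ekz-o} still applies after squaring and localizes the sum to $k\sim qn$ with $q=\mu\Log 2$; the saddle point $y_*$ in $y$ then satisfies $2I(ky_*^2)/y_*^2\sim n$, i.e.\ $y_*^2\sim \mu/(\lambda_2 n)$. Setting $v=e^{i\theta/\sqrt n}$ and expanding $\Psi_n(y)=\exp\lpa{-nb_1 y+O(ny^2+y)}$ with $b_1=-\lambda_3/(2\lambda_2^{3/2})$, the Laplace method around $y_*$ reproduces the asymptotic form of Proposition~\ref{P:Xn-cf} multiplied by this extra exponential factor. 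The $\sqrt n$ mean correction $\bar\mu'$ of \eqref{E:dimdis3} emerges from the coupling between the $\sqrt n$-scale piece $-nb_1 y$ of $\log\Psi_n$ and the $\theta$-induced shift of the saddle point $y_*$; the $\sqrt n$-correction to the variance is captured similarly by including the next expansion level, with the dependence on $\lambda_3,\lambda_4,\lambda_5$ coming from the next Taylor coefficients of $\tilde\gamma$. Assembling everything and invoking the continuity theorem for characteristic functions then delivers \eqref{E:dimdis3}.

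The main obstacle is controlling the interference between $\Psi_n$ and the saddle-point Gaussian. In the $\lambda_1>0$ case the analogous factor contributes only an $O(1)$ multiplicative constant (compare \eqref{E:ztoal} and \eqref{E:exp1}), whereas here $\Psi_n$ is genuinely active at scale $\sqrt n$ and couples nontrivially with both the exponential saddle $y_*$ and the $\theta$-shift induced by $v=e^{i\theta/\sqrt n}$. Technically, this forces the bootstrapping expansion analogous to \eqref{E:r} to carry an additional $n^{-1/2}$-term generated by $\Psi_n$, and all uniform local estimates from the proof of Proposition~\ref{P:Xn-cf} must be re-derived at the shifted saddle $y_*\sim\sqrt{\mu/(\lambda_2 n)}$; in particular, an analogue of \eqref{E:Bk-le} must be obtained with $z$ replaced by $y^2$ and tracked down to the order at which the finite corrections $\bar\mu'$ and the $\sqrt n$-variance term appear. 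Once these uniform estimates are secured, the Laplace method and Gaussian approximation proceed by the same template used in Section~\ref{S:euler}.
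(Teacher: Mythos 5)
The paper explicitly omits a proof of Theorem~\ref{T:dimdis3} (``We only sketch the proof in the self-dual case, and omit that in the other''), so there is no argument to compare against line by line; the authors only indicate that one should combine the asymptotics from \cite{Hwang2020} with the machinery of Section~\ref{S:euler}, and they warn in the closing paragraph that the analysis is ``much more involved than it looks.'' Your overall route — substitute $\Lambda(z)=e^{y^2}$ to obtain $P_n(v)=\lambda_2^{n/2}[y^n]\Psi_n(y)E(y^2,v)$ and rerun the two-stage saddle-point — is exactly the intended one, and it mirrors what the paper does for the self-dual Theorem~\ref{T:dimdis4} via $\Lambda(z^2)=e^{y^2}$. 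However, your saddle-point is mislocated: extracting $[y^n]$ from $E(y^2,v)$ amounts to extracting $[w^{n/2}]$ from $E(w,v)$, so the dominant index is $k\sim qn/2$ (not $qn$) and $y_*^2\sim 2/(\mu n)$; the parameter $\lambda_2$ does not enter the $y$-saddle (it is carried by the prefactor $\lambda_2^{n/2}$), so your $y_*^2\sim\mu/(\lambda_2 n)$ is wrong. This matters quantitatively. With $b_1=-\lambda_3/(2\lambda_2^{3/2})$, the corrected saddle gives $-nb_1 y_*=-b_1\sqrt{2n/\mu}=\frac{\lambda_3\pi}{2\sqrt{3}\lambda_2^{3/2}}\sqrt{n}$, matching the paper's $\beta$ in the last display of Section~\ref{S:final}; your saddle would produce a spurious $\lambda_2^{2}$ rather than $\lambda_2^{3/2}$. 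The halving of the effective size parameter from $n$ to $n/2$ (while keeping $v=e^{i\theta/\sqrt n}$, which rescales $\theta$ by $1/\sqrt 2$) is precisely what forces $\bar\mu=\mu/2$ and $\bar\sigma^2=\sigma^2/2$; your stated constants would not come out of your stated saddle.

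The second, more structural, gap is that your proposal is silent on the two-saddle interference that the paper explicitly calls the source of the extra difficulty. Since $E(y^2,v)$ is even in $y$, the Cauchy contour in $y$ has two conjugate saddles $\pm y_*$, and only the non-even factor $\Psi_n$ distinguishes them. When $\lambda_3>0$, $\Psi_n(y_*)/\Psi_n(-y_*)=e^{\Theta(\sqrt n)}$, so one saddle exponentially dominates and the Laplace method goes through as you describe. But the theorem is stated under the hypotheses $\lambda_1=0$, $\lambda_2>0$ only; when $\lambda_3=0$ (and more generally when the first nonzero odd coefficient is $\lambda_{2\ell+1}$ with $\ell>1$), the symmetry-breaking is weaker, the two saddles contribute at comparable order, and the polynomial power $\chi_\ell$ and constant $c_\ell$ in the count asymptotics depend on the parity of $n$, as the paper emphasizes. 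For the central limit theorem you must verify that this parity-dependent prefactor is, to the needed order, independent of $v$, so that it cancels in $P_n(e^{i\theta/\sqrt n})/P_n(1)$ uniformly in $\theta$. Your proposal skips this step, and as written the argument is incomplete for general $\Lambda$ with $\lambda_1=0$.
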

Both the mean and the variance constants are halved, when 
compared with those of the $\lambda_1>0$ case. This is 
intuitively clear as one expects that the entry $2$ is omnipresent. 

The analysis of this well anticipated limit result is much more
involved than it looks because the polynomial term in the asymptotic
approximation depends on the first nonzero odd number in the 
entry-set $\Lambda$. More precisely, if
\[
	\lambda_{2j-1}=0,\quad\text{for}\quad
	1\le j\le \ell, \AND \lambda_2,\lambda_{2\ell+1}>0,
\]
then it is proved in \cite{Hwang2020} that the total number of 
$\Lambda$-FMs of size $n$ satisfies 
\[
	[z^n]\sum_{k\ge0}\prod_{1\le j\le k}
	\lpa{1-\Lambda(z)^{-j}}
	= c_\ell e^{\beta \sqrt{n}}\rho^{\frac12n} 
	n^{\frac12n+\chi_\ell}
	\lpa{1+O\lpa{n^{-\frac12}}},
\]
where $(c_\ell,\chi_\ell)$ depends not only on $\ell$ but also on the 
parity of $n$, and 
\[
	(\beta,\rho) = 
	\llpa{\frac{\lambda_3\pi}{2\sqrt{3}\,\lambda_2^{3/2}},
	\frac{3\lambda_2}{e\pi^2}}.
\]

\section{Concluding remarks}

We conclude this paper by briefly indicating possible refinements to 
the central limit theorems derived in this paper. 

The simplest case is Theorem~\ref{T:dimdis2} concerning the size 
distribution of random FMs under fixed dimensions. Since it fits the 
standard Quasi-powers framework by \eqref{E:EzYm}, an optimal rate of 
order $O\lpa{m^{-1}}$ is readily guaranteed; see \cite{Hwang1998}. 

Optimal convergence rates in the other cases are structurally well
expected, but technically more involved. For instance, a closer
examination of our proof of Proposition~\ref{P:Xn-cf} shows that the
proof given there holds indeed in the wider range $|v-1|\le \ve$;
that is, we have the more precise estimate
\[
	\mathbb{E}\lpa{e^{X_ni\theta/\sqrt{n}}}
	= \begin{cases}
		e^{\mu\sqrt{n}i\theta-\frac12\sigma^2\theta^2}
		\lpa{1+O\lpa{(|\theta|+|\theta|^3)n^{-\frac12}}},
		&\text{if } |\theta|\le \ve n^{\frac16},\\
		O\lpa{e^{-\ve\theta^2}},
		&\text{if }\ve n^{\frac16}\le |\theta|\le\ve n^{\frac12}.
	\end{cases}
\]
Indeed, our proof of Proposition~\ref{P:Xn-cf} is simpler if we
restrict to the range
$|v-1|=O\lpa{n^{-\frac12}}$ for central limit theorem purposes. By
the classical Berry-Esseen inequality (see \cite[p.\
641]{Flajolet2009} or \cite{Hwang1998} and the references therein),
we can then obtain an optimal convergence rate of order
$O\lpa{n^{-\frac12}}$ in the central limit theorem \eqref{E:dimdis},
namely,
\[
    \sup_{x\in\mathbb{R}}\left|
    \mathbb{P}\llpa{\frac{X_n-\mu n}{\sigma \sqrt{n}}
    \le x}-\frac1{\sqrt{2\pi}}
    \int_{-\infty}^x e^{-\frac12t^2}\dd t \right| 
    = O\lpa{n^{-\frac12}}.
\] 
Similarly, optimal convergence rates can be derived for other central 
limit theorems, namely, Theorem~\ref{T:dimdis4} and 
Theorem~\ref{T:dimdis3}. 

Finally, local limit theorems are also anticipated, but the 
technicalities involved are more delicate; these and related 
approximations will be discussed elsewhere. 

\bibliographystyle{abbrv}
\bibliography{fishburn_dim}
\end{document}